\documentclass[11pt]{amsart}
\usepackage{amssymb}
\usepackage[hyphens]{url}

\newtheorem{theorem}{Theorem}[section]
\newtheorem{corollary}[theorem]{Corollary}
\newtheorem{lemma}[theorem]{Lemma}
\newtheorem{proposition}[theorem]{Proposition}

\newtheorem{problem}{Problem}

\theoremstyle{definition}
\newtheorem{definition}[theorem]{Definition}

\newtheorem{question}[theorem]{Question}

\theoremstyle{remark}

\newtheorem{example}[theorem]{Example}

\numberwithin{equation}{section}

\def\Z {{\mathbb{Z}}}

\def\3{{|\!|\!|}}

\def\rank{{\mathrm{rank}}}
\def\lh{{\mathrm{lh}}}

\def\dgnb{{\mathrm{dgnb}}}
\renewcommand{\restriction}{\mathord{\upharpoonright}}
\def\gnb#1{G_{\langle #1\rangle}}
\def\hier#1{$#1$-CLI}
\def\otr#1#2{T_{\mathcal{#1}}^{#2}}
\def\aut{{\mathrm{Aut}}}

\def\dsZ{{\Z^*}}
\def\rk{\mathrm{rk}}

\def\itx#1{{\mbox{\textrm{ #1 }}}}
\def\itxr#1{{\mbox{\textrm{ #1}}}}
\def\itxl#1{{\mbox{\textrm{#1 }}}}

\begin{document}

\title[On the complexity of extensions of non-archimedean CLI groups]{On the complexity of extensions of non-archimedean Polish groups admitting a compatible complete left-invariant metric}
\author{Longyun Ding}
\address{School of Mathematical Sciences and LPMC, Nankai University, Tianjin, 300071, P.R.China}
\email{dingly@nankai.edu.cn}
\thanks{Research is partially supported by the National Natural Science Foundation of China (Grant No. 12271264).}
\author{Xu Wang}
\email{xwang@mail.nankai.edu.cn}

\subjclass[2010]{Primary 03E15, 22A05}
\keywords{hierarchy, non-archimedean Polish group, group extension}


\begin{abstract}
    In this article, motivated by a problem asked by Allison and Panagiotopoulos~\cite{SAllisonAPana}, we study a problem concerning the complexity of group extensions within a hierarchy (denoted by \hier\alpha\space and L-\hier\alpha) on the class of non-archimedean CLI Polish groups:

    Given a non-archimedean Polish group $G$ and one of its closed normal subgroup $N$, suppose $N$ and $G/N$ are \hier\alpha\space and \hier\beta, respectively. Is $G$ always \hier{(\alpha+\beta)}?

    We provide a positive answer under a certain additional assumption. We then construct two examples yielding negative answers:
    \begin{enumerate}
        \item for each countably infinite ordinal $\alpha$, there exists a group $G$ that is not \hier\alpha, but $G$ has a \hier1\space normal subgroup $N$ such that $G/N$ is proper \hier\alpha;
        \item there exists a proper \hier3\space group $U$ that has an abelian normal subgroup $N$ such that $U/N$ is also abelian.
    \end{enumerate}
    These examples also provide negative answers to the original problem raised by Allison and Panagiotopoulos.

    Finally, we show that if $N$ and $G/N$ are \hier\alpha\space and \hier\beta\space with $\beta>0$, respectively, then $G$ is \hier{\beta\cdot(\omega\cdot\alpha+1)}, which gives an upper bound on the complexity of the extended group.





\end{abstract}
\maketitle

\section{Introduction}

A {\it Polish} group is a topological group with the underlying topology Polish. A {\it non-archimedean} Polish group is a Polish group that has an open neighborhood basis of the identity element consisting of open subgroups. A theorem of Becker and Kechris (cf. \cite[Theorem 1.5.1]{BK}) states that a Polish group $G$ is non-archimedean iff $G$ can be embedded as a closed subgroup of $S_\infty$, i.e., the permutation group on $\omega$. A metric $d$ on a group $G$ is said to be {\it left-invariant} if $d(gh,gk)=d(h,k)$ for any $g,h,k\in G$. A CLI group is a topological group that admits a compatible complete left-invariant metric.

Malicki \cite{malicki11} introduced the notion of the orbit tree for closed subgroups of $S_\infty$.
He proved that a closed subgroup of $S_\infty$ is CLI iff its corresponding orbit tree is well-founded.
Moreover, he showed the class of all CLI Polish groups is $\pmb{\Pi}_1^1$ non-Borel.
Subsequently, Xuan \cite{xuan} modified the definition of the orbit tree, and further discussed the relationship between the rank of an orbit tree and topological aspects of a non-archimedean Polish group. He showed that a non-archimedean Polish group is locally compact iff its corresponding orbit tree has finite rank.
Later, Ding and Wang \cite{DingW} also modified the definition of the orbit tree, by which they introduced a hierarchy on the class of all non-archimedean CLI Polish groups, denoted by \hier\alpha\space and L-\hier\alpha.
Allison and Panagiotopoulos \cite{SAllisonAPana} defined the notion of $\alpha$-balancedness for general Polish groups.
They used it to give a hierarchy that stratifies the class of all CLI Polish groups. In detail, given a Polish group $G$, they defined $\rk(V,U)$ for $V,U\subseteq_1 G$, where $V\subseteq_1 G$ indicates that $V$ is an open neighborhood of identity element of $G$. By letting $\rk(G)=\sup\{\rk(V,G)+1:V\subseteq_1 G\}$, they proved that $G$ is CLI iff $\rk(G)$ is a countable ordinal. For any ordinal $\alpha$, $G$ is said to be $\alpha$-balanced if $\rk(G)\le\alpha$. As an application of the notion of $\alpha$-balancedness, they further proved that the class of all CLI Polish groups forms a $\pmb{\Pi}_1^1$-complete set.

A well-known result states that for a Polish group $G$ and a closed normal subgroup $N$ of $G$, $G$ is CLI iff both $N$ and $G/N$ are CLI (cf. \cite[Theorem 2.2]{gao98}). Allison and Panagiotopoulos in \cite{SAllisonAPana} raised the following problem:

\begin{problem}
    Let $H$ be a closed normal subgroup of a topological group $G$. Is it true that
    \begin{equation*}
        \rk(G)\leq\sup\{\rk(V,H;H)+\rk(G/H):V\subseteq_1H\}?
    \end{equation*}
Here $\rk(V,H;H)$ means that the underlying topological group for function $\rk(V,H)$ is $H$.
\end{problem}

Instead of solving this problem directly, we ask a similar problem:

\begin{problem}
    Let $N$ be a closed normal subgroup of a non-archimedean Polish group $G$. Suppose $N$ is \hier\alpha\space and $G/N$ is \hier\beta, where $\alpha,\beta<\omega_1$. Is $G$ \hier{(\alpha+\beta)}?
\end{problem}

To build a connection between these two problems, we first discuss the relationship between a non-archimedean Polish group being $\alpha$-balanced and being \hier\alpha. We show that

\begin{theorem}
    Let $G$ be a non-archimedean Polish group and $\alpha<\omega_1$. Then $G$ is \hier\alpha\space iff $G$ is $(1+\alpha)$-balanced.
\end{theorem}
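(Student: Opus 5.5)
The plan is to compare the two rank functions directly, by transfinite induction, after putting both into a common language of open subgroups. Since $G$ is non-archimedean, fix a realization of $G$ as a closed subgroup of $S_\infty$ and the decreasing neighborhood basis of open subgroups $G_n$ (pointwise stabilizers of $\{0,\dots,n-1\}$) that underlies the orbit tree of \cite{DingW}.

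The first step is to replace arbitrary neighborhoods $V,U\subseteq_1 G$ in the Allison--Panagiotopoulos rank $\rk(V,U)$ by basic open subgroups $G_m,G_n$. I will verify from the recursive definition in \cite{SAllisonAPana} that $\rk(V,U)$ is monotone: it does not decrease when $V$ shrinks or $U$ grows. Then, by cofinality of the $G_n$,
\[
\rk(G)=\sup\{\rk(G_m,G)+1:m\in\omega\},
\]
and similarly all intermediate ranks can be computed along the $G_n$.

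The second step is to set up a dictionary between nodes of the orbit tree and such pairs. A node of the Ding--Wang orbit tree at level $n$ is, roughly, determined by data living in $G_n$-orbits and cosets. I will associate to each node a pair $(G_m,G_n\cap gG_k g^{-1})$, or the analogous pair appearing in the recursion for $\rk$, so that the immediate successors of a node correspond exactly to the witnesses in the successor clause ``$\rk(V,U)\ge\beta+1$''. Then I will prove by induction on $\beta<\omega_1$ that the node has tree-rank $\ge\beta$ iff the corresponding pair has $\rk\ge\beta$, possibly up to a fixed finite shift at the bottom level. Limit stages are immediate because both ranks are defined by suprema. The successor stage is where the combinatorics of the orbit-tree definition must be matched against the quantifier pattern ``for every $W\subseteq_1 G$ there is $g\in U$ \dots'' in the definition of $\rk$. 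I expect this step to be the main obstacle, since the orbit tree branches over countably many orbits/cosets while $\rk$ quantifies over group elements. The remedy I would try first is to use continuity of the action: an element $g$ only matters through its restriction to a finite set, that is, through its coset modulo some $G_k$.

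Finally, I will read off the statement. $G$ is \hier\alpha\space iff the orbit tree has rank $\le\alpha$. By the dictionary this holds iff $\rk(G_m,G)<\alpha'$ for all $m$, where $\alpha'$ accounts for the finite offset. Since $1+\alpha=\alpha$ for infinite $\alpha$, the offset only matters for finite $\alpha$, and there a direct check of the small cases fixes it. For example, \hier0\space should correspond to $\rk(G)\le1$, which is the case where $G_m$ is normal-like, and this pins down the offset as exactly $1+$. Thus the two hierarchies agree in the form ``\hier\alpha\space iff $(1+\alpha)$-balanced''.
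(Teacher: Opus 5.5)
\textbf{There is a genuine gap: the scale of your dictionary is wrong.} Your central inductive claim is that a node of the orbit tree has tree-rank $\ge\beta$ iff the associated pair has $\rk\ge\beta$, up to a fixed finite shift. That claim is false. Your read-off step also misreads the definition: $G$ is \hier\alpha\ when $\rho(\mathcal{G})\le\omega\cdot\alpha$, not when the orbit tree has rank $\le\alpha$.

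One unit of $\rk$ corresponds to a whole block of $\omega$ tree levels. In the clause for $\rk(V,U)\le\beta$, the witness $W$ may be $\gnb{m}$ with $m$ arbitrarily far above the current level $n$, whereas the orbit tree descends one level at a time.

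Here is a concrete failure. Let $G=(\Z/2)^\omega$ act on $\omega$, with coordinate $i$ swapping $2i$ and $2i+1$. Every point stabilizer $G_a$ is open and normal, so $\rk(G_a,G)=1$. By Proposition~\ref{basic property of rk} it follows that every $\rk(V,U)$ in this group is at most $1$, whatever pairing you choose. Yet the root $(0,\{2i,2i+1\})$ of $\otr{G}{\omega}$ has tree-rank $2i$, which is unbounded in $i$. No finite shift reconciles these, so your induction already breaks at the finite stages.

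Your calibration at the bottom is also off. $\rk(G)\le1$ forces $G$ to be trivial, which is indeed \hier0. The ``normal-like'' situation, where open normal subgroups exist, gives $\rk(G)=2$, i.e.\ \hier1.

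What is needed is the paper's correspondence.
\begin{enumerate}
\item Reduce to the orbit tree on $\omega$ itself: $\rho(\mathcal{G})=\rho(\otr{G}{\omega})$ for $\mathcal{G}=(\gnb{n})$, by Lemma~\ref{another highest orbit tree}.
\item Pair the node $(n,\gnb{n}\cdot a)$ with $(G_a,\gnb{n})$, where $G_a$ is the stabilizer of $a$. Point stabilizers suffice for $V$ because $\gnb{m}=G_0\cap\dots\cap G_{m-1}$ together with Proposition~\ref{basic property of rk}(3).
\item Your instinct that $g$ matters only through a coset is made precise by $\rk(G_a,g\gnb{m}g^{-1})=\rk(G_{g^{-1}(a)},\gnb{m})$. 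This turns the quantifier over $g$ into one over the orbit: $\rk(G_a,\gnb{n})\le\alpha$ iff some $m\ge n$ satisfies $\rk(G_b,\gnb{m})<\alpha$ for all $b\in\gnb{n}\cdot a$.
\item Combine this with Proposition~\ref{L_k(T)}, which costs only an additive $m-n-1$. With $T=\otr{G}{\omega}$, induction then gives $\rho_T(n,\gnb{n}\cdot a)<\omega\cdot l\iff\rk(G_a,\gnb{n})\le l$ for $1\le l<\omega$, and $\rho_T(n,\gnb{n}\cdot a)<\omega\cdot(\alpha+1)\iff\rk(G_a,\gnb{n})\le\alpha$ for $\alpha\ge\omega$.
\end{enumerate}

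The $1+\alpha$ then comes from two sources, not from a finite shift at the bottom of the tree. One is the $+1$ in $\rk(G)=\sup_a(\rk(G_a,G)+1)$. The other is the change of indexing from $\omega\cdot l$ to $\omega\cdot(\alpha+1)$ at $\alpha=\omega$. This is also why the stage $\alpha=\omega$ needs its own argument. More generally, limit stages are not ``immediate by suprema'': $\rk\le\lambda$ is still an existential clause over $W$, not a supremum.
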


In view of this theorem, a positive answer to Problem 1 would yield a positive answer to Problem 2. In case of the semidirect products, we obtain a positive answer to Problem 2 under a certain assumption (see Theorem~\ref{positive answer for semidirect product}). We then apply this theorem to two classes of examples: the wreath products $G\wr H$ and the semidirect products $G\ltimes\Z^\omega$.

However, these results cannot be generalized to all non-archimedean CLI Polish groups. In fact, the following result allows us to construct a counterexample to Problem 2:

\begin{theorem}\label{introduction counterexample 1}
    Let $\Gamma$ be a countably infinite group, and let $G$ be a closed subgroup of $\aut(\Gamma)$. If $G$ is proper \hier\alpha, then $A=G\ltimes_\rho\Gamma$ is proper L-\hier\alpha, where $\rho:G\to\aut(\Gamma)$ is given by
    $\rho(g)(\gamma)=g(\gamma)$ for $g\in G$ and $\gamma\in\Gamma$.
\end{theorem}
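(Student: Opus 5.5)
The plan is to work directly with the orbit trees of Ding and Wang \cite{DingW}, realizing $A$ as a closed permutation group on $\Gamma$ itself. Let $A=G\ltimes_\rho\Gamma$ act on $\Gamma$ by affine maps,
\[
(g,\gamma)\cdot x=\gamma\, g(x).
\]
First I will check that this action is faithful and that its image is closed in $\mathrm{Sym}(\Gamma)$. Closedness should follow from $G$ being closed in $\aut(\Gamma)$: a limit of affine maps $x\mapsto\gamma_n g_n(x)$ sends $1$ to $\lim\gamma_n$, so $\gamma_n$ is eventually constant, and then $g_n$ converges in $\aut(\Gamma)$. This identifies $A$, as a topological group, with a closed subgroup of $S_\infty$. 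Two facts follow at once. The action is transitive, since $\Gamma$ acts by left translation. The stabilizer of the point $1$ is exactly $G$, acting by its given action on $\Gamma$.

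The key step is to compare the orbit tree of $A$ for this action with the orbit tree of $G$ for its action on $\Gamma$. For a finite tuple $\bar x=(1,x_1,\dots,x_n)$, the pointwise stabilizer $A_{\bar x}$ equals $G_{(x_1,\dots,x_n)}$. The $A$-orbits of tuples beginning with $1$ therefore correspond bijectively to $G$-orbits of the remaining tuples. By transitivity, every tuple can be translated to one beginning with $1$. I expect the orbit tree of $A$ to be, up to the coding conventions of \cite{DingW}, the orbit tree of $G$ with one new node prepended above it, or more precisely a single orbit at the first level under which $T_G$ sits. Through the definitions of \hier\alpha\ and L-\hier\alpha\ in terms of ranks of these trees, this should shift the rank by exactly the amount that separates ``\hier\alpha'' from ``L-\hier\alpha''. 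Two things then remain to be proved. The upper bound: $A$ is L-\hier\alpha\ because $G$ is \hier\alpha. The lower bound: $A$ is not \hier\alpha\ because $G$ is not \hier\beta\ for any $\beta<\alpha$.

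The main obstacle is the lower bound, which must show that the extra level genuinely costs rank. The hierarchy is supposed to be independent of the chosen embedding into $S_\infty$, so I cannot merely observe that one particular tree is taller. I will cross-check using Theorem~1 and $\alpha$-balancedness. In $A$, conjugating a basic open subgroup $V\le G$ by $\gamma\in\Gamma$ gives
\[
(1,\gamma)(v,1)(1,\gamma)^{-1}=(v,\gamma\, v(\gamma)^{-1}),
\]
and this lies in $V$ only when $v\in G_\gamma$. Consequently $\rk(V,A)$ in $A$ dominates $\rk(V_\gamma,G)$-type quantities taken over all $\gamma$, and this yields the strict increase.

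For the upper bound, I will run the rank computation level by level, using that $\Gamma$ is discrete and normal with $A/\Gamma\cong G$. This bounds each $\rk(V,U)$ in $A$ by the corresponding rank in $G$ plus one. Properness of $G$, meaning it is \hier\alpha\ but not \hier\beta\ for $\beta<\alpha$, then pins $A$ exactly at proper L-\hier\alpha. The case where $\alpha$ is a limit ordinal needs separate attention, since there the ``$+1$'' interacts with suprema.
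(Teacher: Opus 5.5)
Your setup is the paper's: the affine embedding $(g,\gamma)\mapsto(\lambda\mapsto\gamma g(\lambda))$ of $A$ into $S_\Gamma$, transitivity, and the observation that the orbit tree of $A$ on $\Gamma$ is a single root $(0,\Gamma)$ with a copy of $\otr{G}{\Gamma}$ beneath it. The paper takes $A_0=A$, $A_{n+1}=G_n\times\{1_\Gamma\}$ and checks that the identity map reduces $E^\Gamma_{A_{p+1}}$ to $E^\Gamma_{G_p}$ for every $p$.

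The gap is that you stop there, believing one particular tree cannot decide the question. It can. \hier\alpha\ and L-\hier\alpha\ are defined by $\rho(\mathcal{A})\le\omega\cdot\alpha$ and $\omega(\rho(\mathcal{A}))\le\omega\cdot\alpha$ for a single $\mathcal{A}\in\dgnb(A)$. These conditions do not depend on the choice of $\mathcal{A}$, by the results of \cite{DingW} quoted in the preliminaries. Lemma~\ref{another highest orbit tree} then identifies $\rho(\mathcal{A})$, for the stabilizer basis, with the rank of the orbit tree on $\Gamma$. So $\rho(\mathcal{A})=\rho(\otr{G}{\Gamma})+1=\rho(\mathcal{G})+1=\omega\cdot\alpha+1$, which gives the upper and lower bounds at once.

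The equality $\rho(\mathcal{G})=\omega\cdot\alpha$ uses the paper's notion of properness: \hier\alpha\ and not L-\hier\beta\ for any $\beta<\alpha$. Your paraphrase (``not \hier\beta\ for $\beta<\alpha$'') is too weak, and under it the statement is false. Take $\Gamma=\Z$ and $G=\aut(\Z)\cong\Z/2\Z$: $G$ is \hier1\ and not \hier0. But $A$ is the infinite dihedral group, which is countable discrete, hence \hier1, hence not proper L-\hier1.

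The replacement arguments you propose through $\alpha$-balancedness would not close the gap.

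For the upper bound, a bound of the form ``rank in $G$ plus one'' only gives $\rk(A)\le 1+\alpha+1$, i.e.\ that $A$ is \hier{(\alpha+1)}. That is strictly weaker than L-\hier\alpha. In fact balancedness cannot see the L-levels at all: by Theorem~\ref{relation of balanced and CLI, non-archimedean case}, a proper L-\hier\alpha\ group and a proper \hier{(\alpha+1)} group both have $\rk=1+\alpha+1$. So ``properness pins $A$ exactly at proper L-\hier\alpha'' cannot come out of such a computation. If you want the upper bound without trees, note that $G\times\{1_\Gamma\}$ is open in $A$ because $\Gamma$ is discrete, and it is \hier\alpha; then use the characterization of L-\hier\alpha\ by open subgroups.

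For the lower bound, your conjugation formula is correct, but ``$\rk(V,A)$ dominates $\rk(V_\gamma,G)$-type quantities'' is not yet an argument. What you actually need is that the single open subgroup $G\times\{1_\Gamma\}$ satisfies $\rk(G\times\{1_\Gamma\},A)\ge 1+\alpha$, whereas in $G$ every $\rk(V,G)$ is $<1+\alpha$. Proving this amounts to Lemma~\ref{computation of open set rank} together with Theorems~\ref{rank and balanced, finite case} and~\ref{rank and balanced, infinite case}, applied to $\rho_{\otr{A}{\Gamma}}(0,\Gamma)=\omega\cdot\alpha$. That is exactly the tree computation you already have.
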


Consequently, we may choose a proper \hier\alpha\space group $G$ for some $\alpha\geq\omega$, and define $A=G\ltimes_\rho\Gamma$. Then $A$ is proper L-\hier\alpha, and hence not \hier{(1+\alpha)}.

In~\cite{DingW}, we define the notion $\rank(G)$ so that $\rank(G)\le\alpha$ iff $G$ is L-\hier\alpha\space. Note that in Theorem~\ref{introduction counterexample 1} we have $\rank(A)=\rank(G)=\alpha$ and $\rank(\Gamma)=0$. One may expect that
$$\rank(G)\leq\rank(N)+\rank(G/N)$$
holds for all non-archimedean Polish groups $G$ and their closed normal subgroups $N$. But this is refuted by the following more fundamental counterexample:


\begin{theorem}\label{rank counterexample}
    There exists a non-archimedean Polish group $U$ that is proper \hier3, although $U$ has a closed normal subgroup $N$ such that both $N$ and $U/N$ are abelian, thus \hier1.
\end{theorem}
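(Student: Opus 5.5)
The plan is to construct $U$ explicitly as a semidirect product of two abelian non-archimedean Polish groups, $U=H\ltimes N$, and then compute its level in the hierarchy using the characterization of \hier\alpha\ via $(1+\alpha)$-balancedness (Theorem 1.1). The natural candidates are $N=\Z^\omega$ (or a countable power of a discrete abelian group) and $H$ a closed abelian subgroup of $\aut(N)$ acting by "shift-with-coefficients". For instance, take $N=\Z^{\omega}$ with the product topology and let $H$ be a closed abelian subgroup of $\Z^{\omega}$ acting by unipotent-type automorphisms $h\cdot(x_n)_n=(x_n+\sum_{m<n}c_{n,m}(h)x_m)_n$. The action must be continuous, but it should distort neighborhoods of the identity badly. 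Both $N$ and $U/N\cong H$ are abelian, hence \hier1, since abelian groups are SIN and therefore at the bottom level.

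The key steps are as follows.

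First, fix the action $\rho\colon H\to\aut(N)$ so that $U$ is Polish and non-archimedean, and describe a neighborhood basis of the identity of open subgroups $V_k=H_k\ltimes N_k$. Here $H_k,N_k$ are the coordinate stabilizers.

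Second, compute the orbit tree, or equivalently the $\rk$ function of Allison–Panagiotopoulos on these basic subgroups, by analyzing the conjugates $gV_kg^{-1}$ for $g\in U$. Since $H$ and $N$ are abelian, conjugation by $(h,x)$ acts on $(h',y)$ as $(h',\,x+\rho(h)y-\rho(h')x)$. So the only non-invariance comes from the two terms $\rho(h)y$ and $(I-\rho(h'))x$. The first term is controlled by how $H$ moves $N_k$. The second is a "commutator" term, which produces a second layer of non-invariance. I expect this interplay to give exactly depth $3$: one level from $\rho(H)$ failing to preserve $N_k$ uniformly, and one more from the commutators $[H_k,N]$ not being small uniformly in the $N$-coordinate.

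Third, prove the upper bound that $U$ is \hier3. The cleanest route is to verify $4$-balancedness directly from the rank recursion using the explicit conjugation formula, or to invoke the paper's positive result for semidirect products (Theorem~\ref{positive answer for semidirect product}) if its hypothesis holds. That result would give \hier{(1+1)}$=$\hier2, which is too strong. So the hypothesis must fail, and the bound $3$ should come from a direct computation instead.

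Fourth, prove properness, i.e.\ that $U$ is not \hier2. To do this I exhibit a basic open subgroup $V$ and a sequence of group elements witnessing that $\rk(V,U)$ is at least $3$: a sequence of conjugates that must be refined twice before stabilizing.

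The main obstacle is choosing the coefficients $c_{n,m}$ so that both bounds hold simultaneously. Coefficients that are too tame make $U$ \hier2, for example when $H$ acts by uniformly equicontinuous maps. Coefficients that are too wild break the CLI property or push the rank higher. My first attempt is $c_{n,m}(h)=h(n)$ for $m=n-1$ and $0$ otherwise, so that $H=\Z^\omega$ acts by the lower bidiagonal matrix with entries $h(n)$. I would then check the lower bound by using elements $h$ with $h(n)$ large at a single coordinate $n$. If that action only gives level $2$, I will pass to a two-step unipotent action with entries in $\Z^{\omega}$ indexed by pairs, which creates the extra level.
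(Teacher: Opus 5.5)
\textbf{There is a genuine gap: the proposal never produces a group, and the one concrete candidate you give cannot work.} Your overall framing is sensible, and the paper's group actually fits it. Take $H=\{X\in U:v=x=y=z=0\}$ (the positions of $a,c$) and $N'=\{X\in U:u=w=0\}$ (the positions of $b,d,e,f$). Both are abelian, $N'$ is normal, and $U=H\ltimes N'$. Conjugation by the element of $H$ with entries $a,c$ sends the element of $N'$ with entries $(b,d,e,f)$ to $(b,\,d+ab,\,e-bc,\,f+ae-dc-abc)$, which is a three-step unipotent action.

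Your first attempt, however, is not an action. Write $\rho(h)=I+D_hS$, with $D_h$ diagonal with entries $h(n)$ and $S$ the shift. Then $\rho(h)\rho(h')=\rho(h+h')+D_hSD_{h'}S$, and $D_hSD_{h'}Sx=(h(n)h'(n-1)x_{n-2})_n\neq 0$ in general. Even if you repair this (e.g.\ letting only odd coordinates feed even ones), the action is far too local. Every $\rho(h)$ maps $N_k$ into $N_k$, and for $h'\in H_k$ the term $(I-\rho(h'))x$ vanishes on all coordinates $<k$. So by your own conjugation formula every $V_k=H_k\ltimes N_k$ is normal, and $U$ is SIN, i.e.\ $1$-CLI, not even level $2$. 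The fallback (``a two-step action indexed by pairs'') is unspecified, so neither the construction nor either bound is established.

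The missing idea is a source of \emph{non-uniform} continuity. In the paper the coefficients are row-finite integer matrices $u\in L$, topologized by the subgroups $L_n$ of matrices whose first $n$ rows vanish. The number $R^n(u)$, the column beyond which the first $n$ rows of $u$ vanish, decides when $uG_p\subseteq G_n$ and when $uL_p\subseteq L_n$. It is invariant under $L_n$ but unbounded on every $L_{n-1}$-coset, and this produces one $\omega$-jump in the orbit tree. The class-$3$ ($4\times 4$ unitriangular) structure stacks this jump twice:
\begin{itemize}
\item the coset $XU_n$ is fixed by $U_p$ iff $p\ge N_3(X)=\max\{n,R^n(u),R^n(v),R^n(uv-x)\}$;
\item when $R^n(u)>n$, nodes at level $N_2(X)-1$ get rank $\omega$;
\item nodes at level $n-1$ get rank $\omega\cdot 2$;
\item hence $\rho(T^{U/U_n}_{\mathcal U})=\omega\cdot 2+n$.
\end{itemize}

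Your properness step also aims at the wrong target. Proper $3$-CLI means $3$-CLI and \emph{not L-$2$-CLI}. A single $V$ with $\rk(V,U)\ge 3$ only shows that $U$ is not $3$-balanced, i.e.\ not $2$-CLI, and a proper L-$2$-CLI group also satisfies that. You must show that no open subgroup is $2$-CLI. The paper gets this from $\rho(\mathcal U)=\sup_n(\omega\cdot 2+n)=\omega\cdot 3$ being a limit ordinal.
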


Surprisingly, despite the negative answer to Problem 2, we still managed to find an upper bound for the complexity of group $G$.

\begin{theorem}
    Let $G$ be a non-archimedean Polish group, $N$ be a proper closed normal subgroup of $G$. If $N$ is \hier\alpha\space and $G/N$ is \hier\beta, then $G$ is \hier{\beta\cdot(\omega\cdot\alpha+1)}.
\end{theorem}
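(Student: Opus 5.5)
Since the paper has just stated that $G$ is \hier\alpha\space iff $G$ is $(1+\alpha)$-balanced, I will work entirely with the Allison--Panagiotopoulos rank $\rk(V,U)$ on $G$, $N$ and $G/N$, and translate back to the \hier{} hierarchy at the end. Let $\pi:G\to G/N$ be the quotient map. Because $G$ is non-archimedean, I fix countable neighbourhood bases of open subgroups $V_0\supseteq V_1\supseteq\cdots$ in $G$. Then $V_n\cap N$ is a basis for $N$ and $\pi(V_n)=V_nN/N$ is a basis for $G/N$. The aim is the ordinal bound
$$\rk(V,G)\le \beta\cdot(\omega\cdot\alpha'+1)$$
for every $V\subseteq_1 G$, where $\alpha'$ is the rank of $N$ and $\beta$ the rank of $G/N$, up to the $1+$ shifts.

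The core is a two-level induction. The inner induction is on $\rk(\pi(V),\pi(U);G/N)$. Its claim is that if $V\cap N$ has rank $\le\gamma$ relative to $U\cap N$ in $N$, then $\rk(V,U;G)$ is at most $\beta\cdot\delta$ plus a contribution controlled by $\gamma$. Here $\delta$ is an ordinal attached to $\gamma$ that is built up by the outer induction on $\gamma$, which ranges over the rank of $N$.

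To carry out the inner step, I would unwind the definition of $\rk$: it is a supremum over conjugation or balancing witnesses, namely the elements $g\in U$ needed to find $W$ with $gWg^{-1}\subseteq V$. I would split each such $g$ as $g=hn$, with $\pi(h)$ handled by the rank in $G/N$ and $n\in N$ handled by the rank in $N$.

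The obstacle is that the $N$-component is not fixed. Each time we descend one step in the $G/N$-rank, conjugation by the coset representatives can move $V\cap N$ to different subgroups $g(V\cap N)g^{-1}$ of $N$. Their $N$-ranks are still $\le\alpha$, but the descent in $N$ must be restarted. This restarting is the source of the multiplicative structure $\beta\cdot(\omega\cdot\alpha+1)$, as opposed to the naive $\alpha+\beta$, which the counterexamples show fails.

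So the plan for that step is the following. A full pass through the $G/N$ hierarchy costs at most $\beta$. One step down in the $N$-rank may require $\omega$ many such passes, one for each of the countably many (by separability) relevant conjugates, or one per finite stage of the $V_n$ approximation. Hence each unit of $N$-rank costs at most $\beta\cdot\omega$. Iterating over the $\alpha$ levels of $N$ gives $\beta\cdot\omega\cdot\alpha$, and the top level contributes a final $+\beta$. This yields $\beta\cdot(\omega\cdot\alpha+1)$, and limit stages are handled by taking suprema.

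The main difficulty I expect is making the $\omega$ factor rigorous. One has to show that the supremum over conjugates appearing at a single $N$-level is attained along countably many rank-decreasing chains, each of length $<\beta\cdot\omega$. I would try to use the openness of $V_n$ to reduce to finitely many coset representatives at each stage $n$, and then take the supremum over $n$.

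The last step is to convert via the paper's balancedness characterization, using $\beta>0$ to absorb the $1+$ shifts:
$$1+\beta\cdot(\omega\cdot\alpha+1)=\beta\cdot(\omega\cdot\alpha+1)$$
when $\beta$ is infinite, and I would check the finite cases directly.
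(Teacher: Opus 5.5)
There is a genuine gap. What you give is an accounting heuristic rather than an argument, and the step you postpone as ``the main difficulty'' is the entire content of the theorem.

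\textbf{The missing inductive step.} The inner claim (``$\beta\cdot\delta$ plus a contribution controlled by $\gamma$'') is never formulated. The mechanism meant to drive it is to write $g=hn$ and treat $\pi(h)$ by the rank in $G/N$ and $n$ by the rank in $N$. This does not work as stated, because those two projected pieces of information do not determine $gWg^{-1}$ relative to $V$. It already fails at rank $0$: two distinct complements $U,V$ of $N$ in a finite group satisfy $\pi(U)=\pi(V)$ and $U\cap N=V\cap N=\{1_G\}$, yet $U\not\subseteq V$. Small rank in $G/N$ only places conjugates inside $VN$, and the resulting $N$-components are not confined to any conjugate of $V\cap N$.

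\textbf{The factor $\omega$ and the nesting.} Your explanation of the factor $\omega$ is also wrong. The countably many conjugates in the definition of $\rk$ are handled in parallel, as a supremum, so they cannot produce $\omega$ consecutive passes through $G/N$. Your paragraphs also disagree on the nesting. Restarting the $N$-descent at every $G/N$-step gives a bound of the shape $(\cdots)\cdot\beta$. By contrast, $\beta\cdot(\omega\cdot\alpha+1)=\beta\cdot\omega\cdot\alpha+\beta$ requires a full $G/N$-analysis to be restarted at every step of the $N$-analysis. These genuinely differ: $2\cdot(\omega+1)=\omega+2$, while $(\omega+1)\cdot 2=\omega\cdot 2+1$.

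\textbf{What the paper does instead.} The missing idea is an invariant that mixes $N$ and $G/N$ at the level of orbits, and the paper does not use $\rk$ at all. It works with the orbit tree $\otr{G}{X}$ of a countable discrete $G$-space $X$.
\begin{enumerate}
\item For $x\in X$, let $s(x,\mathcal G,N)$ be the least $p$ with $G_pN\cdot x=N\cdot x$, i.e.\ the stage from which the induced $G/N$-action on $X/N$ fixes $[x]_N$.
\item Iterate this inside $G_q$ with $N_q=N\cap G_q$, which is normal in $G_q$. This gives levels $0=q_{x,0}<q_{x,1}<\cdots$ with $G_{q_{x,n+1}}\cdot x\subseteq N_{q_{x,n}}\cdot x$.
\item The nodes $(q_{x,n},N_{q_{x,n}}\cdot x)$, plus one extra root, form a tree $T_B$ of rank at most $\rho(\mathcal N)+1$.
\item Between levels $q_{x,n+1}$ and $q_{x,n+2}$, the $G$-orbits of $x$ are tracked by the orbit tree of $G_{q_{x,n+1}}/N_{q_{x,n+1}}$ acting on $N_{q_{x,n}}\cdot x/N_{q_{x,n+1}}$. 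That group is an open subgroup of $G/N$, so this tree has rank at most $\rho(\pi(\mathcal G))$.
\item Attach such a quotient tree to each node of $T_B$ and order lexicographically. This yields a well-founded relation into which $(\otr{G}{X},>)$ maps homomorphically.
\end{enumerate}
Hence $\rho(\otr{G}{X})\le\rho(\pi(\mathcal G))\cdot(\rho(\mathcal N)+1)\le(\omega\cdot\beta)\cdot(\omega\cdot\alpha+1)=\omega\cdot\bigl(\beta\cdot(\omega\cdot\alpha+1)\bigr)$.

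So the $\omega$ in $\omega\cdot\alpha$ is simply the orbit-tree rank of $N$: one quotient tree per node, and $\omega$ nodes per unit of level. The conclusion that $G$ is \hier{\beta\cdot(\omega\cdot\alpha+1)} then follows directly, with no detour through balancedness and no $1+$ bookkeeping.
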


Note that this upper bound is quite large. For instance, even if both $N$ and $G/N$ are \hier{1}, the upper bound in the above theorem for $G$ is \hier{(\omega+1)}.
Therefore, we would like to know whether the upper bound \hier{\beta\cdot(\omega\cdot\alpha+1)} given in the above theorem is optimal.

\setcounter{problem}{0}

\section{Preliminaries}

Let $\alpha$ be an ordinal. We define
$$\omega(\alpha)=\max\{0,\lambda:\lambda\le\alpha\mbox{ is a limit ordinal}\}.$$
Then $\alpha=\omega(\alpha)+m$ for some $m\in\omega$.

Let $E$ be an equivalence relation on a set $X$, $x\in X$, and $A\subseteq X$. The $E$-equivalence class of $x$ is $[x]_E=\{y\in X:xEy\}$. Similarly, the $E$-saturation of $A$ is $[A]_E=\{y\in X:\exists z\in A\,(yEz)\}$.

Let $G$ be a group. We denote the identity element of $G$ by $1_G$. For a subgroup $H$ of $G$, we denote by $G/H$ the set of all left-cosets of $H$.

A topological space is {\it Polish} if it is separable and completely metrizable. A topological group is {\it Polish} if its underlying topology is Polish. Let $G$ be a Polish group and $X$ a Polish space. An action of $G$ on $X$, denoted by $G\curvearrowright X$, is a map $a:G\times X\to X$ that satisfies $a(1_G,x)=x$ and $a(gh,x)=a(g,a(h,x))$ for $g,h\in G$ and $x\in X$. The pair $(X,a)$ is called a {\it Polish $G$-space} if $a$ is continuous. When the action is understood from context, we write $g\cdot x$ instead of $a(g,x)$.
The {\it orbit equivalence relation} $E_G^X$ is defined as $xE_G^Xy\iff\exists g\in G\,(g\cdot x=y)$. Note that the $E_G^X$-equivalence class of $x$ is $G\cdot x=[x]_G=\{g\cdot x:g\in G\}$, which is also called the {\it $G$-orbit} of $x$. The class of all $G$-orbits is $X/G=\{[x]_G:x\in X\}$. Similarly, for $A\subseteq X$, the $E_G^X$-saturation of $A$ is $G\cdot A=[A]_G=\{g\cdot x:g\in G\land x\in A\}$.

Let $E,F$ be two equivalence relations on sets $X,Y$, respectively. A {\it homomorphism} from $E$ to $F$ is a function $f:X\to Y$ satisfying
\begin{equation*}
    \forall\,x,y\in X\left(x\,E\,y\implies f(x)\,F\,f(y)\right);
\end{equation*}
and a {\it reduction} from $E$ to $F$ is a function $f:X\to Y$ satisfying
\begin{equation*}
    \forall\,x,y\in X\left(x\,E\,y\Longleftrightarrow f(x)\,F\,f(y)\right).
\end{equation*}
In particular, if there exists a reduction from $E$ to $F$ which is a bijection, we say that $E$ is {\it isomorphic} to $F$.

Let $<$ be a binary relation on a set $T$. We say that $(T,<)$ is a {\it tree} if
\begin{enumerate}
\item[(1)] $\forall s\in T\,(s\not<s)$,
\item[(2)] $\forall s,t,u\in T\,((s<t\land t<u)\implies s<u)$,
\item[(3)] $\forall s\in T\,(|\{t\in T:t<s\}|<\omega\land\forall\,t,u<s\,(t=u\lor t<u\lor u<t))$.
\end{enumerate}
For $s\in T$, we define the {\it length} of $s$ as ${\rm lh}(s)=|\{t\in T:t<s\}|$. It is clear that $s<t$ implies ${\rm lh}(s)<{\rm lh}(t)$. For $n\in\omega$, we denote the $n$-th level of $T$ by
$$L_n(T)=\{s\in T:{\rm lh}(s)=n\}.$$
Each element in $L_0(T)$ is called a {\it root} of $T$.

Let $(T,<)$ be a tree. We say that $T$ is {\it well-founded} if any non-empty subset of $T$ contains at least a maximal element, or equivalently (under AC), if $T$ contains no infinite strictly increasing sequence. Let $T$ be a well-founded tree. We define the rank function $\rho_T$ on $T$ inductively as
$$\rho_T(s)=\sup\{\rho_T(t)+1:s<t\land t\in T\}$$
for $s\in T$. If $\rho_T(s)=0$, we say that $s$ is a {\it terminal} of $T$. Next we define
$$\rho(T)=\sup\{\rho_T(s)+1:s\in T\}.$$
So $\rho(T)=0$ iff $T=\emptyset$. It is clear that $\rho(T)=\sup\{\rho_T(s)+1:s\in L_0(T)\}$. If $L_0(T)=\{s_0\}$ is a singleton, then $\rho(T)=\rho_T(s_0)+1$ is a successor ordinal.

For $s\in T$, we define
$$T_s=\{t\in T:s=t\vee s<t\}.$$
Since $L_0(T_s)=\{s\}$, we have $\rho(T_s)=\rho_T(s)+1$. For the convenience of discussion, we let $T_s=\emptyset$ whenever $s\notin T$. Note that $\rho(T_s)$ is always a non-limit ordinal regardless of $s\in T$ or not.


We list some facts about computing the rank of nodes in a well-founded tree.


\begin{proposition}\label{tree}
Let $T$ be a well-founded tree, then for all $s\in T$,
\begin{align*}
    \rho_T(s)&=\sup\{\rho_T(t)+1:s<t\land t\in T\land\lh(t)=\lh(s)+1\}.
\end{align*}
\end{proposition}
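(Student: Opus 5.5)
We prove the two inequalities between the left-hand side $\rho_T(s)=\sup\{\rho_T(t)+1:s<t\}$ and the right-hand side, which is the same supremum restricted to immediate successors $t$ of $s$, i.e.\ those with $\lh(t)=\lh(s)+1$. Since the right-hand supremum ranges over a subset of the set defining $\rho_T(s)$, the inequality ``$\ge$'' is immediate. So all the work is in ``$\le$'': for every $t\in T$ with $s<t$, we must find an immediate successor $t'$ of $s$ with $\rho_T(t)+1\le\rho_T(t')+1$.

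First we need a monotonicity fact: if $u<v$ in $T$, then $\rho_T(u)>\rho_T(v)$. Indeed, $v$ is one of the nodes over which the supremum defining $\rho_T(u)$ is taken, so $\rho_T(u)\ge\rho_T(v)+1>\rho_T(v)$. As a consequence, $u\le v$ implies $\rho_T(v)\le\rho_T(u)$.

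Next, given $t$ with $s<t$, we locate the node $t'$ lying between them at level $\lh(s)+1$. By axiom (3), the predecessors of $t$ form a finite set linearly ordered by $<$, and we list them as $t_0<t_1<\cdots<t_{n-1}$ with $n=\lh(t)$. For each $i$, the predecessors of $t_i$ are exactly $t_0,\dots,t_{i-1}$. Here we use transitivity: any $u<t_i$ satisfies $u<t$, so $u$ is some $t_j$. Irreflexivity together with transitivity rules out $t_j$ with $j\ge i$, since that would give $t_i\le t_j<t_i$ or $t_i<t_i$. Hence $\lh(t_i)=i$. Since $s<t$, we have $s=t_k$ where $k=\lh(s)$.

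Now define $t'=t_{k+1}$ if $k+1<n$, and $t'=t$ if $k+1=n$. In either case $s<t'\le t$ and $\lh(t')=\lh(s)+1$. By the monotonicity fact, $\rho_T(t)\le\rho_T(t')$, so $\rho_T(t)+1\le\rho_T(t')+1$. Taking the supremum over all $t>s$ gives ``$\le$''. The only mildly delicate step is this level-counting argument, which verifies that $\lh$ along the chain of predecessors increases by exactly one at each step.
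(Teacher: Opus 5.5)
Your proof is correct and complete. The inequality $\ge$ is trivial. For $\le$, you correctly replace an arbitrary $t>s$ by the node $t'$ with $s<t'\le t$ and $\lh(t')=\lh(s)+1$, using that $\rho_T$ strictly decreases along $<$. The paper lists this proposition as a standard fact without proof, and your argument is the routine verification it leaves to the reader. This includes your check that the predecessors $t_0<\cdots<t_{n-1}$ of $t$ satisfy $\lh(t_i)=i$, which is what makes the argument work for the paper's abstract (possibly multi-rooted) trees.
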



\begin{proposition}\label{L_k(T)}
    Let $(T,<)$ be a well-founded tree, $k\in\omega$. For any $s\in L_k(T)$ and $m>k$, we have
        \begin{align*}
            \rho_T(s)\leq\sup\{\rho_T(t)+1:t\in L_m(T)\}+(m-k-1).
        \end{align*}
\end{proposition}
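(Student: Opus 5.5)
The argument is an induction on $d=m-k\geq 1$, applied to all nodes $s\in L_k(T)$ at once, with Proposition~\ref{tree} as the only tool. Throughout, write
$$\sigma_m=\sup\{\rho_T(t)+1:t\in L_m(T)\},$$
with the convention $\sup\emptyset=0$.

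For the base case $d=1$, i.e.\ $m=k+1$, Proposition~\ref{tree} gives
$$\rho_T(s)=\sup\{\rho_T(t)+1:s<t,\ \lh(t)=k+1\}.$$
This is a supremum over a subset of $L_{k+1}(T)$, so it is at most $\sigma_{k+1}=\sigma_{k+1}+0$, which is the claim.

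For the inductive step, assume the inequality holds for every node of $L_{k+1}(T)$ and every $m$ with $m-(k+1)=d-1\geq 1$. Now take $s\in L_k(T)$ and $m=k+d$. By Proposition~\ref{tree}, $\rho_T(s)$ is the supremum of $\rho_T(t)+1$ over the children $t$ of $s$, and each such $t$ lies in $L_{k+1}(T)$. The induction hypothesis gives
$$\rho_T(t)\leq\sigma_m+(m-k-2),$$
so
$$\rho_T(t)+1\leq\sigma_m+(m-k-2)+1=\sigma_m+(m-k-1).$$
Here ordinal addition is associative, and adding the finite number $m-k-2$ and then adding $1$ is the same as adding $m-k-1$. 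Taking the supremum over all children $t$ gives the bound for $s$. If $s$ has no children, then $\rho_T(s)=0$ and the bound is trivial.

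The only point that needs care is the ordinal arithmetic. Every finite term is added on the right, so monotonicity of $\beta\mapsto\beta+1$ on the right is all that is used, and no commutativity is needed. No step here is a real obstacle; the lemma follows directly from Proposition~\ref{tree} by induction.
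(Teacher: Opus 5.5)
Your proof is correct. The paper lists this proposition as a standard fact and gives no proof. Your induction on $d=m-k$, using Proposition~\ref{tree} at each level together with $(\sigma_m+(m-k-2))+1=\sigma_m+(m-k-1)$, is the natural argument. One thing to tidy: the inductive step invokes the hypothesis at level $k+1$, so the statement proved by induction on $d$ must read ``for every $k$ and every $s\in L_k(T)$, with $m=k+d$.'' It cannot be a statement about a single fixed level $k$, as ``applied to all nodes $s\in L_k(T)$ at once'' suggests.
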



Let $(S,<)$ and $(T,<)$ be two trees. A map $\phi:S\to T$ is said to be an {\it order-preserving map} if
$$\forall s,t\in S\,(s<t\implies\phi(s)<\phi(t)).$$
It is said to be an {\it order-preserving embedding (isomorphism)} if it is injective (bijective) and
$$\forall s,t\in S\,(s<t\iff\phi(s)<\phi(t)).$$
If an order-preserving map $\phi$ additionally satisfies $\lh(\phi(s))=\lh(s)$ for each $s\in T$, then we say $\phi$ is {\it Lipschitz}. Note that if there is an order-preserving map $\phi:S\to T$ and $T$ is well-founded, then $(S,<)$ is also well-founded; moreover, we have $\rho(S)\leq\rho(T)$.



Next, we recall the definition of \hier\alpha\ groups. We begin by reviewing orbit trees. Given a set $X$, let $\mathcal{E}=(E_n)$ be a decreasing sequence of equivalence relations on $X$. We define the {\it orbit tree} of $\mathcal{E}$ on $X$ by
$$\otr{E}{X}=\{(n,C):n\in\omega\wedge C\mbox{ is a nonsingleton $E_n$-class}\},$$
ordered by
$$(n,C)<(m,D)\iff n<m\land C\supseteq D.$$

We recall a useful criterion to determine whether there exists an order-preserving map between orbit trees.

\begin{proposition}[{\cite[Proposition 3.4]{DingW}}]\label{uniform reduction induces tree embedding}
    Suppose $\mathcal{E}=(E_n)$ and $\mathcal{F}=(F_n)$ are two decreasing sequences of equivalence relations on $X$ and $Y$, respectively. Let $\theta:X\to Y$ be an injection. Then
    \begin{enumerate}
        \item if $\theta$ is a homomorphism from $E_n$ to $F_n$ for each $n\in\omega$, then there exists an order-preserving map from $\otr{E}{X}$ to $\otr{F}{Y}$;
        \item if $\theta$ is a reduction from $E_n$ to $F_n$ for each $n\in\omega$, then there exists a Lipschitz embedding from $\otr{E}{X}$ to $\otr{F}{Y}$; in particular, if $\theta$ is a bijection, then $\otr{E}{X}$ is isomorphic to $\otr{F}{Y}$.
    \end{enumerate}
\end{proposition}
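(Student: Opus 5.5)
The idea is to build the map $\phi$ from the equivalence classes themselves. I would define $\phi:\otr{E}{X}\to\otr{F}{Y}$ by $\phi(n,C)=(n,D)$, where $D$ is the $F_n$-class containing $\theta(C)$. There are three things to check: that $\phi$ is well defined, that it is order-preserving, and, in case (2), that it is a Lipschitz embedding.

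Well-definedness is where the hypotheses enter. Since $\theta$ is a homomorphism from $E_n$ to $F_n$, the image $\theta(C)$ lies inside a single $F_n$-class $D$. Since $C$ is not a singleton and $\theta$ is injective, $\theta(C)$ has at least two points, so $D$ is not a singleton either. Hence $(n,D)\in\otr{F}{Y}$.

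For order preservation, suppose $(n,C)<(m,C')$, that is, $n<m$ and $C\supseteq C'$. Let $D$ and $D'$ be the image classes. Then $\theta(C')\subseteq\theta(C)\subseteq D$, so $D'$ meets $D$. Because the relations decrease, $D'$ lies inside a single $F_n$-class, and that class must be $D$. Therefore $D\supseteq D'$, and $(n,D)<(m,D')$. This settles (1).

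For (2), $\theta$ is now a reduction. Then $\theta(C)=D\cap\theta(X)$, so $C$ is recovered from $(n,D)$ as $\theta^{-1}(D)$, and $\phi$ is injective.

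The reverse implication is the step I expect to need the most care. Suppose $(n,D)<(m,D')$ with both in the image of $\phi$, say $D=\phi(C)$ and $D'=\phi(C')$. Then $D'\subseteq D$ gives $C'=\theta^{-1}(D')\subseteq\theta^{-1}(D)=C$, so $(n,C)<(m,C')$.

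For the Lipschitz property, I would argue as follows. The predecessors of $(n,C)$ in $\otr{E}{X}$ are exactly the pairs $(k,[x]_{E_k})$ for $k<n$ with $[x]_{E_k}$ not a singleton, where $x\in C$. A class containing $C$ is automatically non-singleton, since $C$ already has two points, so this holds for every $k<n$. Thus $\lh(n,C)=n$, and likewise $\lh(\phi(n,C))=n$.

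Finally, if $\theta$ is a bijection, then $\theta(C)=D$ and every non-singleton $F_n$-class arises in this way. So $\phi$ is onto and hence an isomorphism.
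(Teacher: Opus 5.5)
Your proof is correct. The level-preserving map $(n,C)\mapsto(n,D)$, with $D$ the $F_n$-class containing $\theta(C)$, is well defined by the homomorphism property together with injectivity of $\theta$, and it is order-preserving because the $F_n$ decrease. In the reduction case the identity $C=\theta^{-1}(D)$ gives injectivity of the map, the reverse implication, and surjectivity when $\theta$ is a bijection.

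The paper does not prove this proposition itself; it quotes it from Ding--Wang. Your argument is the natural one for it. Note also that your map in (1) already preserves levels, so it is Lipschitz there too.
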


For a non-archimedean Polish group $G$ and a countable discrete Polish $G$-space $X$, first we denote by $\dgnb(G)$ the set of all decreasing sequences $\mathcal{G}=(G_n)$ of open subgroups of $G$, such that $G_0=G$ and $(G_n)$ forms a neighborhood basis of $1_G$. Then by letting $\mathcal{E}=(E_n)=(E_{G_n}^X)$, we define the orbit tree of $\mathcal{G}$ on $X$ as
$$\otr{G}{X}=\otr{E}{X}.$$


We list some important results in \cite{DingW} concerning a non-archimedean Polish group $G$ and its orbit tree $\otr{G}{X}$.

\begin{theorem}[{\cite[Corollary 3.13]{DingW}}]\label{relationship between CLI groups and orbit trees}
    Let $G$ be a non-archimedean Polish group and $\mathcal{G}=(G_n)\in\dgnb(G)$. Then
    \begin{align*}
        &G\itx{is CLI}\\
        \iff&\otr{G}{X}\itx{is well-founded for any countable discrete Polish}G\itxl{-space}X\\
        \iff&\otr{G}{X(\mathcal{G})}\itx{is well-founded, where}X(\mathcal{G})=\bigcup_{n\in\omega}G/G_n.
    \end{align*}
\end{theorem}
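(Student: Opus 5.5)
The plan is to prove the cycle of implications (first $\Rightarrow$ second $\Rightarrow$ third $\Rightarrow$ first), using the standard characterization that a Polish group $G$ is CLI iff every left-Cauchy sequence converges. In the non-archimedean setting with $\mathcal{G}=(G_n)\in\dgnb(G)$, after passing to a subsequence, a left-Cauchy sequence $(k_n)$ can be assumed to satisfy $k_n^{-1}k_m\in G_n$ for all $m\ge n$. I will also use the standard fact that a two-sided Cauchy sequence in a Polish group converges.

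\emph{CLI implies well-foundedness for every countable discrete $X$.} Suppose $X$ is countable and discrete and $\otr{G}{X}$ has an infinite branch. Along this branch the levels form a strictly increasing sequence, and since each $G_n$-orbit is contained in a $G_{n'}$-orbit for $n'\le n$, we can fill in all levels. So we get nonsingleton classes $C_n=G_n\cdot x_n$ with $C_{n+1}\subseteq C_n$ for every $n$. Then $x_{n+1}=h_nx_n$ for some $h_n\in G_n$. Put $g_n=h_{n-1}\cdots h_0$. For $m\ge n$ we have $g_mg_n^{-1}\in G_n$, so $k_n=g_n^{-1}$ is left-Cauchy. By CLI, $k_n\to k$, hence $g_n\to g=k^{-1}$. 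By continuity of the action on the discrete space $X$, $x_n=g_nx_0$ is eventually equal to $x=gx_0$. The stabilizer of $x$ is open, so it contains $G_n$ for large $n$, which makes $C_n=G_n\cdot x=\{x\}$ a singleton. This is a contradiction.

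\emph{The second condition implies the third.} This is immediate: each $G/G_n$ is countable (since $G_n$ is open and $G$ is separable) and discrete, so $X(\mathcal{G})$ is a countable discrete Polish $G$-space under left translation.

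\emph{Well-foundedness on $X(\mathcal{G})$ implies CLI.} Suppose $G$ is not CLI, and take a non-convergent left-Cauchy sequence $(k_n)$ normalized as above. Set $g_n=k_n^{-1}$, so that $g_mg_n^{-1}\in G_n$ for $m\ge n$. If for every $j$ we had $G_n\subseteq g_nG_jg_n^{-1}$ for all large $n$, then $(g_n)$ would be right-Cauchy as well. The sequence would then be two-sided Cauchy and would converge, and so would $(k_n)$, which is a contradiction. So fix $j$ such that $G_n\not\subseteq g_nG_jg_n^{-1}$ for infinitely many $n$. Define
\[
C_n=G_n\cdot g_nG_j\subseteq G/G_j .
\]
Because $g_{n+1}\in G_ng_n$, we get $C_{n+1}\subseteq G_n\cdot g_{n+1}G_j=C_n$, so the classes are nested. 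The class $C_n$ is a nonsingleton exactly when $G_n\not\subseteq g_nG_jg_n^{-1}$, which happens for infinitely many $n$. Since the classes are nested, once one of them is a singleton all later ones are too; hence every $C_n$ is a nonsingleton. Therefore $(n,C_n)_{n\in\omega}$ is an infinite branch of $\otr{G}{X(\mathcal{G})}$, contradicting well-foundedness.

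The main obstacle I expect is this last step. The delicate points are choosing the single coset space $G/G_j$ so that the classes are genuinely nested within one space, and extracting $j$ from non-convergence via the two-sided Cauchy argument. The normalization of the left-Cauchy sequence and the left/right bookkeeping for $g_n$ versus $k_n$ need care.
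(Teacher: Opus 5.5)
Your proof is correct, and it takes a different route from the paper. The paper gives no argument here; it imports the statement from \cite[Corollary 3.13]{DingW}. There it belongs to the orbit-tree machinery descending from Malicki's work and comes right after the rank comparison $\rho(\otr{G}{X})\le\rho(\mathcal{G})$ of \cite[Corollary 3.12]{DingW}. The present paper uses that comparison repeatedly, e.g.\ in Lemma~\ref{another highest orbit tree} and Lemma~\ref{upper bound on rank of orbit tree}. In that setting, passing from $X(\mathcal{G})$ to an arbitrary countable discrete $X$ is a quantitative statement about ranks, which is what the $\alpha$-CLI hierarchy needs.

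You instead route the cycle through CLI itself:
\begin{itemize}
\item For CLI implies well-foundedness, you use a direct left-Cauchy argument that works for every $X$ at once, so you never need to compare $\otr{G}{X}$ with $\otr{G}{X(\mathcal{G})}$.
\item For the converse, you turn non-convergence of a normalized left-Cauchy sequence into a failure of two-sided Cauchyness. That failure picks out a single coset space $G/G_j$ carrying the branch $(n,G_n\cdot g_nG_j)_{n\in\omega}$. Nestedness comes from $g_{n+1}\in G_ng_n$, and "infinitely many nonsingletons implies all nonsingletons" comes from nestedness.
\end{itemize}
What your approach buys is a short, self-contained proof using only the Cauchy-sequence characterization of CLI and completeness of the two-sided uniformity of a Polish group. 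What it gives up is any rank information.

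There is one left/right slip in the last step, and the conclusion is unaffected. With your convention ($k_n^{-1}k_m\to 1$ means left-Cauchy), $g_n=k_n^{-1}$ is automatically right-Cauchy, since $g_mg_n^{-1}\in G_n$ for $m\ge n$. The hypothesis $G_n\subseteq g_nG_jg_n^{-1}$ for all large $n$ is what makes $(g_n)$ \emph{left}-Cauchy (equivalently, $(k_n)$ right-Cauchy). Indeed, $g_n^{-1}g_m\in g_n^{-1}G_ng_n\subseteq G_j$ for $m\ge n$. So $(g_n)$ is two-sided Cauchy, as you claim.

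Also make explicit that the normalized subsequence is still non-convergent. This holds because a left-Cauchy sequence with a convergent subsequence converges.
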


\begin{theorem}[theorems 3.14 and 3.17 of \cite{DingW}]
    Let $G$ be a non-archimedean Polish group. For any $\mathcal{G}\in\dgnb(G)$, we define $\rho(\mathcal{G})=\rho(\otr{G}{X(\mathcal{G})})$. Then
    \begin{enumerate}
        \item $\omega(\rho(\mathcal{G}))$ is independent of the choice of $\mathcal{G}\in\dgnb(G)$; i.e., for any $\mathcal{G},\mathcal{G}'\in\dgnb(G)$, we have $\omega(\rho(\mathcal{G}))=\omega(\rho(\mathcal{G}'))$;
        \item whether $\rho(\mathcal{G})$ is a limit ordinal is independent of the choice of $\mathcal{G}\in\dgnb(G)$; i.e., for any $\mathcal{G},\mathcal{G}'\in\dgnb(G)$, we have
        $$\rho(\mathcal{G})\itxr{is a limit ordinal}\iff\rho(\mathcal{G}')\itxr{is a limit ordinal}.$$
    \end{enumerate}
\end{theorem}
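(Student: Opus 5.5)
The plan is to reduce everything to comparing a sequence with its subsequences. Given $\mathcal{G}=(G_n)$ and $\mathcal{G}'=(G'_n)$ in $\dgnb(G)$, both are neighborhood bases of $1_G$ made of open subgroups. So I can choose strictly increasing $k_0=0<k_1<\cdots$ and $l_0=0<l_1<\cdots$ with
$$G_{k_0}\supseteq G'_{l_1}\supseteq G_{k_1}\supseteq G'_{l_2}\supseteq G_{k_2}\supseteq\cdots.$$
Write $\mathcal{K}$ for the interlaced sequence $(G_{k_0},G'_{l_1},G_{k_1},G'_{l_2},\dots)$, which again lies in $\dgnb(G)$. Both $(G_{k_n})$ and $(G'_{l_n})$ are subsequences of $\mathcal{K}$ (up to the harmless first term $G$). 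It therefore suffices to prove one claim: if $\mathcal{H}=(G_{k_n})$ is a subsequence of $\mathcal{G}$ with $k_0=0$, then $\omega(\rho(\mathcal{H}))=\omega(\rho(\mathcal{G}))$, and $\rho(\mathcal{H})$ is limit iff $\rho(\mathcal{G})$ is. Applying the claim twice, through $\mathcal{K}$, gives both parts (1) and (2).

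\textbf{Step 1: handle the change of space.} The spaces differ: $X(\mathcal{H})$ is the sub-union $\bigcup_n G/G_{k_n}$ of $X(\mathcal{G})$. Inclusion is an injective reduction of the relevant orbit relations, so Proposition~\ref{uniform reduction induces tree embedding}(2) gives a Lipschitz embedding for a fixed sequence of subgroups. For the other direction, each $G/G_m$ with $k_n\le m<k_{n+1}$ maps $G$-equivariantly onto $G/G_{k_n}$ with fibres of size $[G_{k_n}:G_m]$. I would use this to show that the extra components only contribute nodes whose ranks lie in the same $\omega$-blocks as those already present.

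\textbf{Step 2: fix a space $X$ and compare the two trees.} The map $(n,C)\mapsto(k_n,C)$ from $\otr{H}{X}$ to $\otr{G}{X}$ is order-preserving, since $E_{H_n}=E_{G_{k_n}}$ and $k$ is increasing. Hence $\rho_{\otr HX}((n,C))\le\rho_{\otr GX}((k_n,C))$. Conversely, I would prove by induction on $\rho_{\otr HX}((n,C))=\omega(\cdot)+j$ that
$$\omega\bigl(\rho_{\otr GX}((k_n,C))\bigr)=\omega\bigl(\rho_{\otr HX}((n,C))\bigr).$$
The tool is Proposition~\ref{L_k(T)} applied inside the subtree $(\otr GX)_{(k_n,C)}$, with $m=k_{n+1}$. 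Nodes of $\otr GX$ strictly between levels $k_n$ and $k_{n+1}$ add at most $k_{n+1}-k_n-1$ to the rank, which is a finite error. Any finite error is absorbed at limit stages.

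\textbf{Step 3: read off the global rank.} The roots of the orbit tree are the classes $(0,G/G_m)$, so $\rho$ is limit iff the supremum of the root ranks is not attained. Equality of $\omega$-parts at the roots gives part (1) directly.

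\textbf{Main obstacle: part (2).} Finite errors can destroy or create attainment of the supremum. For example, root ranks in $\mathcal{H}$ could all equal $\lambda+1$ while the corresponding $\mathcal{G}$-ranks are $\lambda+j$ with $j$ unbounded, giving $\rho(\mathcal{G})=\lambda+\omega$. I expect this to be the hardest step. To rule it out, I would first try to prove that the finite discrepancy is uniform on the top block: nodes of $\otr GX$ with rank in $[\omega(\rho),\rho)$ occur only at levels below a fixed bound. The route is to show that a node of rank $\ge\lambda+j$ forces, via Proposition~\ref{tree}, a chain of length $j$ of non-singleton classes all lying above a node of rank $\ge\lambda$. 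Since $G$ acts transitively on each $G/G_m$, this chain can be translated into a single component. Failing a uniform bound, I would instead argue directly that the supremum over roots is attained in $\mathcal{G}$ iff it is attained in $\mathcal{H}$, using the equivariant projections $G/G_m\to G/G_{k_n}$ from Step 1.
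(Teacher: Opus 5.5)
The paper does not prove this statement; it imports it from \cite{DingW}, so I am judging your argument on its own terms. The skeleton is sound. Interlacing reduces everything to a subsequence $\mathcal H=(G_{k_n})$ with $k_0=0$, the map $(n,C)\mapsto(k_n,C)$ is order-preserving, and Proposition~\ref{L_k(T)} is the right tool.

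\textbf{The gap: the finite error must be uniform.} The argument does not close as written. The missing piece is exactly the uniformity you flag at the end, and it is needed for part (1) as well as part (2).

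Your Step 2 induction only tracks $\omega$-parts, and that hypothesis does not survive a successor step. Suppose $(n,C)$ has $\mathcal H$-rank $\omega\cdot\beta+1$, and its children $(n+1,D)$ all have $\mathcal H$-rank $\omega\cdot\beta$ but $\mathcal G$-ranks $\omega\cdot\beta+j_D$ with $j_D$ unbounded. Then $\rho_{T}(k_n,C)\ge\omega\cdot(\beta+1)$, and the hypothesis ``same $\omega$-block for the children'' does not exclude this.

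Likewise, the inference in Step 3 that equality of $\omega$-parts at the roots gives part (1) directly is invalid. In your own scenario (root ranks $\lambda+1$ versus $\lambda+j$ with $j$ unbounded), every root has matching $\omega$-parts, yet $\omega(\rho(\mathcal H))=\lambda$ while $\omega(\rho(\mathcal G))=\lambda+\omega$. So the obstacle you attribute to part (2) blocks part (1) just as much. The remedies you sketch (a level bound for top-block nodes, translating chains by transitivity, arguing attainment directly) are not carried out and do not address how the two trees compare.

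\textbf{The fix.} It is already contained in your tool: the additive error in Proposition~\ref{L_k(T)} depends only on the levels, not on the node. So carry an explicit level-only bound through the induction. Work on one space $X$ with $S=T_{\mathcal H}^X$ and $T=T_{\mathcal G}^X$, and prove by induction on $\rho_S(n,C)$ that
\[
\rho_S(n,C)=\omega\cdot\beta+j\ (j<\omega)\implies \rho_S(n,C)\le\rho_T(k_n,C)\le\omega\cdot\beta+(k_{n+j+1}-k_n-1).
\]
Apply Proposition~\ref{L_k(T)} at $(k_n,C)$ with $m=k_{n+1}$. The level-$k_{n+1}$ nodes of $T$ above $(k_n,C)$ are exactly the $S$-children $(n+1,D)$.
\begin{itemize}
\item A child whose $S$-rank lies in a lower $\omega$-block has $\rho_T(k_{n+1},D)+1<\omega\cdot\beta$ by induction.
\item A child of $S$-rank $\omega\cdot\beta+j'$ with $j'<j$ has $\rho_T(k_{n+1},D)+1\le\omega\cdot\beta+(k_{n+j+1}-k_{n+1})$.
\end{itemize}
Adding $k_{n+1}-k_n-1$ gives the claim. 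At the roots ($k_0=0$) this says $\rho_T(0,C)\le\omega\cdot\beta+(k_{j+1}-1)$, independently of $C$, which rules out your feared scenario. Consequently:
\begin{itemize}
\item if $\rho(S)=\omega\cdot\beta+i$ with $i\ge 1$, then $\omega\cdot\beta+i\le\rho(T)\le\omega\cdot\beta+k_i$;
\item if $\rho(S)$ is a limit, every root has $T$-rank below it, so $\rho(T)=\rho(S)$.
\end{itemize}
Parts (1) and (2) follow at once.

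\textbf{Step 1 points the wrong way.} The projection $G/G_m\to G/G_{k_n}$ only bounds the old component by the new one, i.e.\ the trivial inequality. Moreover the indices $[G_{k_n}:G_m]$ need not be finite, so fibre counting gives nothing. What you need is the equivariant surjection $G/G_{k_{n+1}}\to G/G_m$ for $k_n\le m<k_{n+1}$, together with a lifting argument. Alternatively, use \cite[Corollary 3.12]{DingW}, which the paper uses repeatedly: $\rho(T_{\mathcal H}^X)\le\rho(\mathcal H)$ for every countable discrete $G$-space $X$. Combined with $X(\mathcal H)\subseteq X(\mathcal G)$, this gives $\rho(\mathcal H)=\rho(T_{\mathcal H}^{X(\mathcal G)})$, so you can take $X=X(\mathcal G)$ above.
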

In accordance with this theorem, for a non-archimedean CLI Polish group $G$, we can define
$$\rank(G)=\itxl{the unique ordinal}\beta\itx{such that}\omega(X(\mathcal{G}))=\omega\cdot\beta.$$
Furthermore, we can introduce the notion of \hier\alpha\space and L-\hier\alpha.
Let $\mathcal{G}\in\dgnb(G)$ and $\alpha<\omega_1$. We say $G$ is \textit{\hier\alpha} if $\rho(\mathcal{G})\leq\omega\cdot\alpha$; and we say $G$ is \textit{L-\hier\alpha} if $\omega(\rho(\mathcal G))\leq\omega\cdot\alpha$, i.e., $\rank(G)\leq\alpha$. Note that L-$\alpha$-CLI groups are clearly $(\alpha+1)$-CLI groups. If $G$ is \hier\alpha\space but not L-\hier\beta\space for any $\beta<\alpha$, we call $G$ {\it proper} \hier\alpha. Similarly, we call $G$ {\it proper} L-\hier\alpha\space if it is L-\hier\alpha\space but not \hier\alpha.

\begin{theorem}[theorems~3.19, 3.21, 4.1, and 4.2 of \cite{DingW}]
    Let $G$ be a non-archimedean CLI Polish group. Then
    \begin{enumerate}
        \item $G$ is \hier0\space iff $G=\{1_G\}$;
        \item $G$ is L-\hier0\space iff $G$ is countable discrete;
        \item $G$ is \hier1\space iff $G$ is TSI, i.e., $G$ admits a compatible two-sided invariant metric, hence all abelian non-archimedean Polish groups are \hier1;
        \item for any $\alpha<\omega_1$, $G$ is L-\hier\alpha\space iff $G$ has an open subgroup that is \hier\alpha;
        \item if $G$ is \hier\alpha\space for some $\alpha<\omega_1$, then any closed subgroup and any quotient group of $G$ are \hier\alpha.
    \end{enumerate}
\end{theorem}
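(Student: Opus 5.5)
The plan is to prove the five items separately. Throughout, fix $\mathcal G=(G_n)\in\dgnb(G)$ and work with $\otr{G}{X(\mathcal G)}$. By the independence theorem above we may pass to subsequences of $(G_n)$ freely, since neither $\omega(\rho(\mathcal G))$ nor whether $\rho(\mathcal G)$ is a limit depends on the choice. One observation is used repeatedly: the relations $E_m=E^X_{G_m}$ decrease in $m$. So if the $E_m$-class $C$ of a point $x$ is nonsingleton, then so are its $E_j$-classes $C_j\supseteq C$ for every $j<m$. This gives a chain $(0,C_0)<\dots<(m,C)$, and hence the $E_0$-class of $x$ has rank at least $m$.

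For (1): $\rho(\mathcal G)=0$ means the tree is empty. In particular the $E_0=E_G$-class of the coset $G_n\in G/G_n$ is a singleton, so $G=G_n$ for all $n$. Since the $G_n$ form a neighbourhood basis of $1_G$, this forces $G=\{1_G\}$; the converse is trivial. For (2): if $G$ is discrete, choose $G_1=\{1_G\}$. Then $E_m$ is equality for $m\ge1$, only level $0$ is populated, and $\rho\le1$. If $G$ is not discrete, pass to a strictly decreasing subsequence. The point $G_n\in G/G_n$ has nonsingleton $E_m$-class $G_mG_n/G_n$ for every $m<n$, so by the observation its root has rank $\ge n-1$. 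Hence $\rho(\mathcal G)\ge\omega$ and $G$ is not L-\hier0.

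For (3): I will use that a non-archimedean $G$ is TSI iff it has a basis of open normal subgroups. If all $G_n$ are normal, then $G_m$ fixes every point of $G/G_n$ for $m\ge n$. So every node coming from $G/G_n$ lies at a level $<n$, each root has finite rank, and $\rho\le\omega$. Conversely, assume $\rho\le\omega$. Then the root containing $G/G_n$ has finite rank $k$. By the observation every $E_m$-class with $m>k$ of a point of $G/G_n$ is a singleton. Thus $G_m\subseteq\bigcap_g gG_ng^{-1}$, which is an open normal subgroup, and these cores form a basis.

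For (5), closed subgroups: for $H\le G$ closed, use $(H\cap G_n)\in\dgnb(H)$. The map $h(H\cap G_n)\mapsto hG_n$ is an injective reduction from $E^{X(H)}_{H\cap G_m}$ to $E^{X(G)}_{G_m}$ for each $m$. Proposition~\ref{uniform reduction induces tree embedding} then gives a Lipschitz embedding of orbit trees, so $\rho$ does not increase. For quotients, use $(G_nN/N)$ and $X(G/N)\cong\bigcup_n G/G_nN$, which is a $G$-equivariant factor of $X(\mathcal G)$. I would build an order-preserving map $\otr{}{X(G/N)}\to\otr{}{X(G)}$ by induction on levels. The map sends a nonsingleton $G_m$-class $D$ to a nonsingleton $G_m$-class of preimages inside the previously chosen lift of its parent. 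This is possible because $G_m$-orbits map onto $G_m$-orbits, so a nonsingleton image has a nonsingleton orbit upstairs. The main worry here is keeping the choices compatible; doing it top-down along the tree should handle that.

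For (4), which I expect to be the main obstacle: let $H=G_k$ with $k$ fixed, and use $(G_n)_{n\ge k}\in\dgnb(H)$. Each $G/G_n$ splits into countably many $H$-orbits, each isomorphic as an $H$-space to $H/(H\cap gG_ng^{-1})$. These embed by Proposition~\ref{uniform reduction induces tree embedding} into a tree of the same type for a conjugated basis. The tree of $G$ restricted to levels $\ge k$ is then a disjoint union of trees of this form. Proposition~\ref{L_k(T)} then shows that the levels below $k$ add only a finite amount, $k-1$, to ranks, so they do not change $\omega(\rho)$. The delicate points are handling the conjugate subgroups $gG_ng^{-1}$ uniformly and showing that $\omega(\rho)$ of $G$ equals $\omega(\rho)$ of $H$. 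Combined with the fact that some open subgroup can be chosen with $\rho$ not a successor beyond $\omega\cdot\alpha$, this should yield $\rho(H)\le\omega\cdot\alpha$ for a suitable $k$ whenever $G$ is L-\hier\alpha.
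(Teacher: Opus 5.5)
The paper quotes this theorem from \cite{DingW} without proof, so I am judging your argument directly. Items (1)--(3) and the quotient half of (5) are fine. There is one genuine gap, in item (4), and one smaller error, in the closed-subgroup half of (5).

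\textbf{The gap: (4), direction ``L-\hier{\alpha} $\Rightarrow$ some open subgroup is \hier{\alpha}''.} Knowing that $\omega(\rho)$ agrees for $G$ and $H=G_k$ cannot give $\rho(\mathcal H)\le\omega\cdot\alpha$. The hard case is $\rho(\mathcal G)=\omega\cdot\alpha+j$ with $1\le j<\omega$, and there the whole task is to remove the finite excess $j$. The ``fact that some open subgroup can be chosen with $\rho$ not a successor beyond $\omega\cdot\alpha$'' that you invoke is exactly the claim being proved.

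The missing idea is elementary. Ranks strictly decrease along chains in $T=T_{\mathcal G}^{X(\mathcal G)}$. So every root has rank $\le\omega\cdot\alpha+j-1$, and every node at level $j$ has rank $<\omega\cdot\alpha$. (If $\rho(\mathcal G)\le\omega\cdot\alpha$, just take $H=G$.) Now take $k=j$ and $\mathcal G_k=(G_{k+n})_n$. The $G_k$-space $G_k/G_{k+n}$ is the $G_k$-orbit of the coset $G_{k+n}$ inside $G/G_{k+n}$. Hence $(m,C)\mapsto(k+m,C)$ identifies $T_{\mathcal G_k}^{X(\mathcal G_k)}$ with the union of the subtrees of $T$ above the level-$k$ nodes $(k,G_k\cdot G_{k+n})$. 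This gives $\rho(\mathcal G_k)\le\omega\cdot\alpha$, so $G_k$ is \hier{\alpha}. No conjugates are involved in this direction.

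\textbf{The converse direction of (4).} Your plan is right: split levels $\ge k$ into $H$-orbits $C\cong H/(H\cap gG_ng^{-1})$, then apply Proposition~\ref{L_k(T)}. But ``embedding into a tree of the same type for a conjugated basis'' does not deliver the bound you need, namely that the subtree over $(k,C)$ has rank $\le\rho(\mathcal H)$. To get it:
\begin{itemize}
\item Note that $H\cap gG_ng^{-1}$ is open, so it contains some $G_{m'}$ with $m'\ge k$.
\item Apply your own top-down lifting argument from (5) to the $H$-equivariant surjection $H/G_{m'}\to C$.
\item This gives an order-preserving map from the shifted subtree into $T_{\mathcal H}^{X(\mathcal H)}$. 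This is essentially \cite[Corollary 3.12]{DingW}, which the paper uses repeatedly.
\item Proposition~\ref{L_k(T)} then yields $\rho(\mathcal G)\le\rho(\mathcal H)+k$.
\end{itemize}

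\textbf{The smaller error: (5), closed subgroups.} The map $h(H\cap G_n)\mapsto hG_n$ is an injective homomorphism from $E_{H\cap G_m}$ to $E_{G_m}$, but it is not a reduction in general. Take $G=S_\infty$, $\mathcal G=(\gnb{n})$, and $H=\langle h\rangle$ with $h=(0\,2\,4)(1\,3\,5)$. Then $H\cap\gnb{1}=\{1_G\}$, so $h$ and $h^2$ are distinct, non-related points of $H/(H\cap\gnb{2})$. Yet $(2\,4)(3\,5)\in\gnb{1}$ sends $h\gnb{2}$ to $h^2\gnb{2}$. The conclusion still holds: Proposition~\ref{uniform reduction induces tree embedding}(1) already gives an order-preserving map, hence $\rho(\mathcal H)\le\rho(\mathcal G)$.
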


Finally, we recall the definition of $\alpha$-balanced group and some of its basic properties (cf. \cite[Section 2]{SAllisonAPana}). Given a topological group $G$ and $V\subseteq G$, we use the notation $V\subseteq_1G$ to indicate that $V$ is an open neighborhood of $1_G$. We inductively define a relation $\rk(V,U)\leq\beta$ for $V,U\subseteq_1 G$ and an ordinal $\beta$ as follows:
\begin{enumerate}
    \item[(1)] $\rk(V,U)=0$ if $U\subseteq V$;
    \item[(2)] $\rk(V,U)\leq\beta$ for $\beta>0$ if there is a $W\subseteq_1G$ such that $$\forall\,g\in G\left(\rk(V,gWg^{-1})<\beta\right);$$
    \item[(3)] $\rk(V,U)=\infty$ if there is no ordinal $\beta$ with $\rk(V,U)\leq\beta$.
\end{enumerate}
We define $\rk(V,U)$ to be the least ordinal $\beta$ such that $\rk(V,U)\leq\beta$. Now let
\begin{equation*}
    \rk(G)=\sup\{\rk(V,G)+1:V\subseteq_1G\}.
\end{equation*}
For any ordinal $\alpha$, we say $G$ is $\alpha$-balanced iff $\rk(G)\leq\alpha$. 

The following proposition lists some basic properties of $\rk(V,U)$.

\begin{proposition}[{\cite[Proposition 2.3]{SAllisonAPana}}]\label{basic property of rk}
    Let $G$ be a topological group and let $U,V,U',V'\subseteq_1G$,  $h\in G$. Then
    \begin{enumerate}
        \item[(1)] if $V\supseteq V'$ and $U\subseteq U'$, then $\rk(V,U)\leq\rk(V',U')$;
        \item[(2)] $\rk(hVh^{-1},hUh^{-1})=\rk(V,U)$; and
        \item[(3)] $\rk(V\cap V',U)\leq\max\{\rk(V,U),\rk(V',U)\}$.
    \end{enumerate}
\end{proposition}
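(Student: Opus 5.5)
The plan is to prove all three items by transfinite induction on the ordinal bound, unwinding the inductive definition of $\rk(V,U)\le\beta$. One convention is fixed first: I read ``$\rk(V,U)\le\beta$'' as upward closed in $\beta$. That is, for $\beta>0$ it holds if either $U\subseteq V$ or clause (2) holds for some $W\subseteq_1G$. The value $\rk(V,U)$ is then the least such $\beta$, as in the paper. If this upward closure is not built into the definition, it has to be proved as a preliminary lemma, and I expect that lemma to be the main obstacle (see the last paragraph). Note also that for $\beta>0$ clause (2) does not mention $U$ at all; this is what makes most steps easy.

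For (1), I will prove by induction on $\beta$ that for all $V\supseteq V'$ and $U\subseteq U'$, $\rk(V',U')\le\beta$ implies $\rk(V,U)\le\beta$. The base case is immediate: $U\subseteq U'\subseteq V'\subseteq V$. For the inductive step, let $W$ witness $\rk(V',U')\le\beta$, so that $\rk(V',gWg^{-1})<\beta$ for every $g$. The induction hypothesis, applied to the pair $(V,gWg^{-1})$ versus $(V',gWg^{-1})$, gives $\rk(V,gWg^{-1})<\beta$. So the same $W$ witnesses $\rk(V,U)\le\beta$. If $\rk(V',U')=\infty$ there is nothing to prove.

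For (2), conjugation by $h$ is a homeomorphic automorphism, so it maps $\subseteq_1$-sets to $\subseteq_1$-sets. I will show by induction on $\beta$ that $\rk(V,U)\le\beta$ implies $\rk(hVh^{-1},hUh^{-1})\le\beta$ for all $h,U,V$. The base case is trivial. In the inductive step, if $W$ witnesses the hypothesis, I take $hWh^{-1}$ as the new witness. For each $g$, set $g'=h^{-1}gh$; then
\[g(hWh^{-1})g^{-1}=h\bigl(g'Wg'^{-1}\bigr)h^{-1},\]
and the induction hypothesis applied to $\rk(V,g'Wg'^{-1})<\beta$ gives the bound. Applying the same argument with $h^{-1}$ gives the reverse inequality, hence equality.

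For (3), I will argue by induction on $\beta$ that $\rk(V,U)\le\beta$ and $\rk(V',U)\le\beta$ together imply $\rk(V\cap V',U)\le\beta$. When both ranks are $0$ we have $U\subseteq V\cap V'$. When both satisfy clause (2) with witnesses $W$ and $W'$, I use $W\cap W'$. For each $g$, item (1) gives
\[\rk(V,g(W\cap W')g^{-1})\le\rk(V,gWg^{-1})<\beta,\]
and similarly for $V'$. The induction hypothesis at $\gamma=\max$ of these two ranks, which is $<\beta$, then yields $\rk(V\cap V',g(W\cap W')g^{-1})<\beta$. The delicate case is the mixed one: $U\subseteq V$ while $V'$ has only a clause-(2) witness $W'$. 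Here I would first try to show that $U\subseteq V$ implies $\rk(V,U')\le\beta$ via clause (2) for some suitable witness. Failing that, I would rely on the upward-closed convention together with item (1) to reduce to the symmetric case. Establishing this monotonicity in $\beta$ is the step I expect to require the most care.
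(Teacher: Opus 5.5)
\textbf{Your (1) and (2) are fine, but (3) has a gap in the mixed case that cannot be filled, because (3) is false for general $U$.}

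The paper does not prove this proposition; it quotes it from Allison--Panagiotopoulos, so your argument has to stand on its own.

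\emph{The definition you are using.} You take the quantifier in clause (2) literally as printed, $\forall g\in G$. Then clause (2) ignores $U$, and the rank collapses to $\{0,1,\infty\}$: for $\beta\ge1$, $\rk(V,U)\le\beta$ iff $U\subseteq V$ or some $W\subseteq_1G$ has all its conjugates inside $V$. So every $\beta$-balanced group would be $2$-balanced. The intended quantifier is $\forall g\in U$, which is how Lemma~\ref{computation of open set rank} uses it. Your inductions for (1) and (2) survive this correction. In (1), restrict the quantifier using $U\subseteq U'$. In (2), use $g\in hUh^{-1}\iff h^{-1}gh\in U$.

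\emph{Why your routes for the mixed case fail.} The mixed case is $U\subseteq V$ while $V'$ has only a clause-(2) witness $W'$.
Your first route would already at $\beta=1$ require a $W$ with $gWg^{-1}\subseteq V$ for all $g\in U$, and such a $W$ need not exist.
Your second route gives nothing. Upward closure only asserts $\rk(V,U)\le\beta$; it does not produce the witness $W$ that your symmetric argument intersects with $W'$.
The mixed case is also hidden inside your symmetric step. The pairs $(V,g(W\cap W')g^{-1})$ and $(V',g(W\cap W')g^{-1})$ that you feed to the induction hypothesis may have ranks $0$ and ${>}0$. So the induction does not close even in the symmetric case.

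\emph{Counterexample.} In $S_\infty$ let $U=V=\{h:h(0)\in\{0,1\}\ \text{or}\ h(1)=0\}$, a symmetric open neighbourhood of $1$. Let $V'=\{h:\exists k\,(h(k)=k)\}$.
Then $\rk(V,U)=0$.
Also $\rk(V',U)=1$. The fixed-point-free involution $\prod_k(2k\;\,2k{+}1)$ lies in $U\setminus V'$, so the rank is at least $1$. The set $W'=\{h:h(0)=0\}$ satisfies $gW'g^{-1}=\{h:h(g(0))=g(0)\}\subseteq V'$ for every $g$, so the rank is at most $1$.
Now take any $W\supseteq G_{\langle n\rangle}$ with $n\ge2$. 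The $3$-cycle $g\colon 0\mapsto1\mapsto n\mapsto0$ lies in $U$. The conjugate $gG_{\langle n\rangle}g^{-1}$ is the pointwise stabilizer of $\{1,\dots,n\}$, and it contains the transposition $(0\;\,n{+}1)\notin V$.
So no $W$ witnesses level $1$, and $\rk(V\cap V',U)\ge2>\max\{\rk(V,U),\rk(V',U)\}$. This holds under either reading of the quantifier.

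\emph{What is true.} The paper uses (3) only in Lemma~\ref{computation of group rank}, with $U=G$. For that, the version for non-archimedean $G$ with $U$ an open subgroup suffices, and it holds.
By (1), witnesses may be shrunk, so take them to be open subgroups.
In the mixed case use $W=U\cap W'$. For $g\in U$ we get $gWg^{-1}=U\cap gW'g^{-1}\subseteq U\subseteq V$. Hence $\rk(V,gWg^{-1})=0$ and $\rk(V',gWg^{-1})<\beta$, and the induction hypothesis applies.
In the symmetric case your $W\cap W'$ works.
All second arguments remain open subgroups, so the induction on $\max\{\rk(V,U),\rk(V',U)\}$ closes.
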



\section{$\alpha$-balanced and \hier\alpha}

In this section, we will discuss the relationship between $\alpha$-balanced and \hier\alpha. To do so, we need to give some lemmas first.

\begin{lemma}\label{subaction induced orbit tree}
    Suppose $G$ is a non-archimedean CLI Polish group, $H$ is a closed subgroup of $G$, and $X$ is a countable discrete Polish $G$-space.
    Let $\mathcal{G}=(G_n)\in\dgnb(G)$ and $\mathcal{H}=(H_n)=(G_n\cap H)\in\dgnb(H)$. Then $\rho(T_\mathcal{H}^X)\leq\rho(T_\mathcal{G}^X)$.
\end{lemma}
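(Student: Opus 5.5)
The plan is to compare the two orbit trees through the identity map on $X$, using Proposition~\ref{uniform reduction induces tree embedding}(1).

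Let $\mathcal{E}=(E_n)=(E_{G_n}^X)$ and $\mathcal{F}=(F_n)=(E_{H_n}^X)$. By definition $T_\mathcal{G}^X=\otr{E}{X}$ and $T_\mathcal{H}^X=\otr{F}{X}$.

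First we check the setup. Since $H$ is closed in $G$ and $(G_n)$ is a decreasing neighborhood basis of $1_G$ consisting of open subgroups with $G_0=G$, the sequence $(H_n)=(G_n\cap H)$ is a decreasing sequence of open subgroups of $H$. It satisfies $H_0=H$ and forms a neighborhood basis of $1_H$ in the subspace topology, so $\mathcal{H}\in\dgnb(H)$. Moreover, $X$ is a countable discrete Polish $H$-space under the restricted action.

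Next, take $\theta=\mathrm{id}_X$, which is an injection. For each $n$, if $x\,F_n\,y$ then $y=h\cdot x$ for some $h\in H_n\subseteq G_n$, so $x\,E_n\,y$. Thus $\theta$ is a homomorphism from $F_n$ to $E_n$ for every $n$. Proposition~\ref{uniform reduction induces tree embedding}(1) then gives an order-preserving map $\phi:T_\mathcal{H}^X\to T_\mathcal{G}^X$. Concretely, $\phi$ sends $(n,C)$ to $(n,[C]_{E_n})$. This is a node because $[C]_{E_n}\supseteq C$ is non-singleton, and it preserves $<$ because $C\supseteq D$ implies $[C]_{E_n}\supseteq[D]_{E_m}$ when $n<m$, since $E_m\subseteq E_n$.

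Finally, $G$ is CLI, so Theorem~\ref{relationship between CLI groups and orbit trees} says $T_\mathcal{G}^X$ is well-founded. By the remark after the definition of order-preserving maps, $T_\mathcal{H}^X$ is then well-founded as well, and $\rho(T_\mathcal{H}^X)\le\rho(T_\mathcal{G}^X)$.

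None of these steps is difficult. The only point needing care is confirming that $\mathcal{H}$ really lies in $\dgnb(H)$, which is immediate from the subspace topology, and that the cited proposition applies with the same underlying set $X$ on both sides.
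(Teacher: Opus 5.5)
Your proposal is correct and follows the paper's proof: the identity map on $X$ is a homomorphism from $E_{H_n}^X$ to $E_{G_n}^X$ for every $n$, so Proposition~\ref{uniform reduction induces tree embedding}(1) gives an order-preserving map $T_\mathcal{H}^X\to T_\mathcal{G}^X$, and the rank inequality follows. Your explicit formula $(n,C)\mapsto(n,[C]_{E_n})$ and the appeal to well-foundedness of $T_\mathcal{G}^X$ (since $G$ is CLI) are details the paper leaves implicit, and both are correct.
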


\begin{proof}
    Define $\theta:X\to X$ as $\theta(x)=x$. For each $n\in\omega$, let $E_n=E_{H_n}^X$ and $F_n=E_{G_n}^X$. Clearly, $\theta$ is a homomorphism from $E_n$ to $F_n$ for each $n\in\omega$. So applying Proposition~\ref{uniform reduction induces tree embedding}(1) we can see that there is an order-preserving map from $\otr{H}{X}$ to $\otr{G}{X}$. Hence $\rho(T_\mathcal{H}^X)\leq\rho(T_\mathcal{G}^X)$.
\end{proof}

For a closed subgroup $G$ of $S_\infty$, recall that
$$\gnb{n}=\{f\in G:\forall\,i<n\left(f(i)=i\right)\}.$$
For any $a\in\omega$, we denote by $G_a$ the stabilizer of $a$, i.e.,
$$G_a=\{g\in G:g(a)=a\}.$$
It follows that $\gnb{n}=G_0\cap\cdots\cap G_{n-1}$. To avoid confusion between the notation $G_a$ for stabilizer and the notation $G_n$ for members of a decreasing group neighborhood basis, we shall temporarily use only the former notation throughout the rest of this section.
\begin{lemma}\label{another highest orbit tree}
    Let $G$ be a closed subgroup of $S_\infty$, and $\mathcal{G}=(G_{\langle n\rangle})\in\dgnb(G)$. Then we have
    $$\rho(\mathcal{G})=\rho(\otr{G}{\omega}).$$
\end{lemma}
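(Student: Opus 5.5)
We prove the two inequalities separately, with $X(\mathcal{G})=\bigcup_{n}G/\gnb{n}$ (a disjoint union of countable discrete $G$-spaces, since each $\gnb{n}$ is open) and $\omega$ carrying the natural action of $G\le S_\infty$. Both trees are taken with respect to the same sequence $\mathcal{E}=(E^{\,\cdot}_{\gnb{k}})_k$.

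\emph{The inequality $\rho(\otr{G}{\omega})\le\rho(\mathcal{G})$.} Define $\theta:\omega\to X(\mathcal{G})$ by $\theta(a)=g\gnb{a+1}$ for any $g\in G$ with $g(a')=a'$ for $a'<a$ and $g(a)$ playing the role of $a$; more simply, we want a $G$-equivariant injection. Since $G_a\supseteq\gnb{a+1}$, the map $G/\gnb{a+1}\to G\cdot a$, $g\gnb{a+1}\mapsto g(a)$, is a $G$-equivariant surjection, not an injection. So instead we embed in the other direction in spirit: the orbit $G\cdot a\cong G/G_a$. We first show that the tree of $G/G_a$ is dominated by that of $G/\gnb{a+1}$. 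By Proposition~\ref{uniform reduction induces tree embedding}(1), it suffices to have an injective homomorphism. Choose, for each coset $hG_a$, one representative coset $h\gnb{a+1}\subseteq hG_a$; this gives an injection $\sigma:G/G_a\to G/\gnb{a+1}$. This choice need not be a homomorphism of $E_{\gnb{k}}$ directly. We therefore prove the inequality tree-theoretically instead. Each node $(k,C)$ of $\otr{G}{\omega}$, with $C=\gnb{k}\cdot b$ non-singleton, is sent to the node $(k,\gnb{k}\cdot \tilde C)$ in $\otr{G}{G/\gnb{b+1}}$, where $\tilde C$ is the preimage of $C$ under the projection $G/\gnb{b+1}\to G\cdot b$. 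This is an $E_{\gnb{k}}$-class, because the projection is equivariant, and it is non-singleton because its image is. Inclusions are preserved, since preimages preserve $\supseteq$. The map is therefore order-preserving, and the inequality follows.

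\emph{The inequality $\rho(\mathcal{G})\le\rho(\otr{G}{\omega})$.} Here we use $\theta:G/\gnb{n}\to\omega^n$, $g\gnb{n}\mapsto(g(0),\dots,g(n-1))$. This is an injective $G$-equivariant map, so $E_{\gnb{k}}$ on $G/\gnb{n}$ corresponds to the diagonal $\gnb{k}$-action on tuples. A $\gnb{k}$-orbit of a tuple $\bar b$ is non-singleton iff some coordinate $b_i$ has a non-singleton $\gnb{k}$-orbit. The key step is then to map each node $(k,\gnb{k}\cdot\bar b)$ to a node of $\otr{G}{\omega}$ in an order-preserving way. Naive coordinate choice fails, since the moving coordinate may change along a branch. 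To fix this we use the identity $\gnb{k}\cdot\bar b=\{\bar c\}$ and iterate stabilizers. Applying Lemma~\ref{subaction induced orbit tree} to $H=G_{b_0}\cap\dots\cap G_{b_{i-1}}$, we decompose the tuple orbit tree by stages: the nodes where $b_0$ moves, then, below the level where $b_0$ becomes fixed, the nodes where $b_1$ moves within the stabilizer subgroup, and so on.

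\emph{The main obstacle.} The difficulty is bounding the rank without additive blow-up across the $n$ stages. We plan to handle it with the observation that $\gnb{k}$ fixes $0,\dots,k-1$, so for $k\ge\max\bar b+1$ the orbit is trivial. Each branch of the tuple tree is thus governed, at each level, by the coordinate $b_i$ of least index still moving. We map the node to $(k,\gnb{k}\cdot b_i)$ for the \emph{last} coordinate still moving, which is monotone along branches because once a coordinate is fixed by $\gnb{k}$ it stays fixed for larger $k$. Verifying that this gives an order-preserving map, in particular that inclusion $\gnb{k}\cdot b_i\supseteq\gnb{m}\cdot b_j$ holds when the chosen index changes, is where we expect the real work. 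If it fails, the fallback is to bound the ranks via Proposition~\ref{L_k(T)} level by level and compare the resulting suprema.
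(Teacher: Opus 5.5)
Your reduction of $G/\gnb{n}$ to the diagonal action on $\omega^n$ via $g\gnb{n}\mapsto(g(0),\dots,g(n-1))$ is the paper's first step too. The step meant to carry the hard inequality $\rho(\mathcal{G})\le\rho(\otr{G}{\omega})$, however, fails.

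\textbf{The coordinate-selecting map is not order-preserving.} The map $(k,\gnb{k}\cdot\bar b)\mapsto(k,\gnb{k}\cdot b_i)$, with $i$ the last (or first) coordinate still moving, is well defined. Along a branch represented by one tuple $\bar c$, though, the chosen index can switch from $i$ at level $k$ to $j\neq i$ at level $m>k$. You would then need $\gnb{m}\cdot c_j\subseteq\gnb{k}\cdot c_i$. But $\gnb{m}\cdot c_j\subseteq\gnb{k}\cdot c_j$, and $\gnb{k}\cdot c_i$, $\gnb{k}\cdot c_j$ are in general distinct, hence disjoint, orbits. For example, let $G$ be the stabilizer in $S_\infty$ of the set of even numbers and $\bar c=(4,1)$. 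At level $0$ your rule picks the odd numbers; at level $2$ it picks $\gnb{2}\cdot 4$, a set of even numbers. Swapping the parities defeats the least-index rule in the same way. That the set of moving coordinates shrinks says nothing about inclusions between orbits of different points. The staged stabilizer decomposition, or the fallback via Proposition~\ref{L_k(T)}, would at best give an ordinal sum over the $n$ stages.

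\textbf{The missing idea.} Do not map into $\otr{G}{\omega}$; bound ranks by a \emph{maximum}. By induction, $\rho(k,\gnb{k}\cdot\bar b)\le\max\{\rho_{\otr{G}{\omega}}(k,\gnb{k}\cdot b_i):\gnb{k}\cdot b_i\text{ is not a singleton}\}$. This holds because every child $(k+1,\gnb{k+1}\cdot\bar c)$ has $\bar c\in\gnb{k}\cdot\bar b$. Hence $\gnb{k}\cdot c_i=\gnb{k}\cdot b_i$, and $(k+1,\gnb{k+1}\cdot c_i)$ is a child of $(k,\gnb{k}\cdot b_i)$ for every $i$ still moving at level $k+1$. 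All coordinates descend one level simultaneously, so there is no additive blow-up. The paper packages this as follows. The identity on $\omega^n$ is a homomorphism from the diagonal $\gnb{k}$-orbit relations to those of the product action of $G^n$ with basis $(\gnb{k}^n)$, so Lemma~\ref{subaction induced orbit tree} applies. The product tree then has rank $\rho(\otr{G}{\omega})$ by Lemma 4.3 of \cite{DingW}.

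\textbf{The easy inequality.} Your argument for $\rho(\otr{G}{\omega})\le\rho(\mathcal{G})$ also contains a false step. The preimage of $C=\gnb{k}\cdot b$ under the equivariant projection $G/\gnb{b+1}\to G\cdot b$ is $\gnb{k}$-invariant, but usually it is a union of several $\gnb{k}$-orbits. For instance, take $G=S_\infty$, $k=1$, $b=5$, and write $g\gnb{6}$ as the tuple $(g(0),\dots,g(5))$. The preimage is the set of tuples with $g(5)\neq 0$, and its $\gnb{1}$-orbits are distinguished by the position of $0$ in the tuple, if it occurs. In addition, the target component $G/\gnb{b+1}$ depends on the chosen representative $b$ of $C$.

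This is repairable. For the subtree above a root $(0,G\cdot a)$, use $G\cdot a\cong G/G_a$ and the projection $p:G/\gnb{a+1}\to G/G_a$, and choose lifts top-down. For each child $C$ of a node $P$ with lift $\tilde P$, take any $\gnb{k}$-orbit inside $\tilde P\cap p^{-1}(C)$. It maps onto $C$, hence is non-singleton. Alternatively, cite Corollary 3.12 of \cite{DingW}, as the paper does.
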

\begin{proof}
Fix a $k\geq 1$, and define $\delta:G\to G^k$ as $\delta(g)=(g,g,\dots,g)$. Clearly, $\delta(G)$ is a closed subgroup of $G^k$. Let $\mathcal{G}^k=(\gnb{n}^k)\in\dgnb(G^k)$ and $\mathcal{H}^k=(\delta(G)\cap\gnb{n}^k)\in\dgnb(\delta(G))$. By \cite[Lemma 4.3]{DingW}, we obtain $\rho(T_{\mathcal{G}^k}^{\omega^k})=\rho(\otr{G}{\omega})$. Then according to Lemma~\ref{subaction induced orbit tree}, we have $\rho(T_{\mathcal{H}^k}^{\omega^{k}})\leq\rho(\otr{G}{\omega})$.

Consider an action of $G$ on $\omega^k$, which is defined as
    $$g\cdot(a_0,\dots,a_{k-1})=(g(a_0),\dots,g(a_{k-1}))$$
    for $g\in G$ and $(a_0,\dots,a_{k-1})\in\omega^k$.
    Let $E_n=E_{G_{\langle n\rangle}}^{\omega^k}$ and $F_n=E_{\delta(G)\cap G_{\langle n\rangle}^k}^{\omega^k}$.
    It is easy to see that $E_n$ is isomorphic to $F_n$ for each $n\in\omega$, witnessed by the identity map on $\omega^k$. Therefore, using Proposition~\ref{uniform reduction induces tree embedding}(2), we can see that $\otr{G}{\omega^k}$ is isomorphic to $T_{\mathcal{H}^k}^{\omega^k}$.

Next, define $\theta:G/G_{\langle k\rangle}\to\omega^k$ as $\theta(gG_{\langle k\rangle})=(g(0),\dots,g(k-1))$. For each $n\in\omega$, let $E_n'=E_{G_{\langle n\rangle}}^{G/G_{\langle k\rangle}}$ and $F_n'=E_{G_{\langle n\rangle}}^{\omega^k}$. Then for any $n\in\omega$, it is straight forward to check that, $\theta$ is a reduction of $E_n'$ to $F_n'$. So by Proposition~\ref{uniform reduction induces tree embedding} again, we have
    $$\rho(T_{\mathcal{G}}^{G/G_{\langle k\rangle}})\leq\rho(T_{\mathcal{G}}^{\omega^k})
    =\rho(T_{\mathcal{H}^k}^{\omega^k}).$$

Consequently, we have $\rho(\mathcal{G})=\sup_k\rho(T_{\mathcal{G}}^{G/G_{\langle k\rangle}})\leq\rho(\otr{G}{\omega})$. Conversely, by \cite[Corollary 3.12]{DingW} we have $\rho(\otr{G}{\omega})\leq\rho(\mathcal{G})$. Thus $\rho(\mathcal{G})=\rho(\otr{G}{\omega})$.
\end{proof}

\begin{lemma}\label{computation of open set rank}
    Let $G$ be a closed subgroup of $S_\infty$. Then for any ordinal $\alpha>0$ and $a,n\in\omega$, we have
    \begin{align*}
        \rk(G_a,\gnb{n})\leq\alpha&\iff\exists\,m\geq n\,\forall\,b\in\gnb{n}\cdot a\,(\rk(G_b,\gnb{m})<\alpha).
    \end{align*}
\end{lemma}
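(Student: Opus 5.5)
The plan is to unwind the recursive clause of the definition of $\rk$ once, and translate conjugates of basic open subgroups into stabilizers of other points. Two observations drive everything. First, for $h\in G$ and $b\in\omega$ we have $hG_bh^{-1}=G_{h(b)}$. Second, the sets $\gnb{m}$, $m\in\omega$, form a decreasing neighborhood basis of $1_G$ consisting of open subgroups. Throughout I read clause (2) of the definition of $\rk(V,U)\le\beta$ as quantifying over $g\in U$, as in \cite{SAllisonAPana}. Under this reading the right-hand side of the lemma quantifies only over the $\gnb{n}$-orbit of $a$. If $g$ ranged over all of $G$, the same argument would go through with $G\cdot a$ in place of $\gnb{n}\cdot a$.

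For the direction ($\Leftarrow$), suppose $m\ge n$ is such that $\rk(G_b,\gnb{m})<\alpha$ for every $b\in\gnb{n}\cdot a$. I take $W=\gnb{m}$ as the witness in clause (2). Let $g\in\gnb{n}$. By Proposition~\ref{basic property of rk}(2), conjugating both arguments by $g^{-1}$ gives
\[
\rk(G_a,g\gnb{m}g^{-1})=\rk(g^{-1}G_ag,\gnb{m}).
\]
By the first observation, $g^{-1}G_ag=G_{g^{-1}(a)}$. Since $g^{-1}\in\gnb{n}$, the point $b=g^{-1}(a)$ lies in $\gnb{n}\cdot a$, so this rank is $<\alpha$. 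As $\alpha>0$, clause (2) applies and gives $\rk(G_a,\gnb{n})\le\alpha$.

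For the direction ($\Rightarrow$), suppose $\rk(G_a,\gnb{n})\le\alpha$ with $\alpha>0$, and fix a witness $W\subseteq_1G$, so that $\rk(G_a,gWg^{-1})<\alpha$ for all $g\in\gnb{n}$. Since the $\gnb{m}$ form a decreasing neighborhood basis, I choose $m\ge n$ with $\gnb{m}\subseteq W$. Then $g\gnb{m}g^{-1}\subseteq gWg^{-1}$, and Proposition~\ref{basic property of rk}(1) yields $\rk(G_a,g\gnb{m}g^{-1})<\alpha$ for all $g\in\gnb{n}$. Now let $b\in\gnb{n}\cdot a$, say $b=h(a)$ with $h\in\gnb{n}$. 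Then $G_b=hG_ah^{-1}$, and Proposition~\ref{basic property of rk}(2) gives
\[
\rk(G_b,\gnb{m})=\rk(G_a,h^{-1}\gnb{m}h).
\]
Taking $g=h^{-1}\in\gnb{n}$, this is $<\alpha$.

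Both directions are essentially bookkeeping, and I expect no real obstacle. The only points needing care are getting the direction of conjugation right, so that $b$ ranges over exactly the orbit $\gnb{n}\cdot a$, and using $\alpha>0$ so that clause (2), rather than clause (1), governs $\rk(\cdot,\cdot)\le\alpha$.
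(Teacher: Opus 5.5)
Your proof is correct and is essentially the paper's argument. In both directions the witness is taken to be (or shrunk to) $W=\gnb{m}$, and Proposition~\ref{basic property of rk}(1),(2) together with $g^{-1}G_ag=G_{g^{-1}(a)}$ convert conjugates of $\gnb{m}$ into stabilizers of points of $\gnb{n}\cdot a$. Your reading of clause (2) as quantifying over $g\in U$ is also exactly what the paper's proof uses (it quantifies over $g\in\gnb{n}$), even though the paper's displayed definition writes $g\in G$.
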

\begin{proof}
    $(\Rightarrow)$. Since $\rk(G_a,\gnb{n})\leq\alpha$, there exists a $W\subseteq_1G$ such that for all $g\in\gnb{n}$, $\rk(G_a,gWg^{-1})<\alpha$. Now we can find an $m\geq n$ with $\gnb{m}\subseteq W$. Then by Proposition~\ref{basic property of rk} we obtain
    \begin{align*}
        \rk(G_{g^{-1}(a)},\gnb{m})&=\rk(g^{-1}G_ag,\gnb{m})=\rk(G_a,g\gnb{m}g^{-1})\\
        &\leq\rk(G_a,gWg^{-1})<\alpha.
    \end{align*}
    So for any $b\in\gnb{n}\cdot a$ we have $\rk(G_b,\gnb{m})<\alpha$.

    $(\Leftarrow)$. For any $g\in\gnb{n}$, we have $\rk(G_a,g\gnb{m}g^{-1})=\rk(G_{g^{-1}(a)},\gnb{m})<\alpha$ by Proposition~\ref{basic property of rk}. Hence $\rk(G_a,\gnb{n})\leq\alpha$.
\end{proof}

\begin{lemma}\label{computation of group rank}
    Let $G$ be a closed subgroup of $S_\infty$. Then
    $$\rk(G)=\sup\{\rk(G_a,G)+1:a\in\omega\}.$$
\end{lemma}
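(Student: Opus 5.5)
We must show $\rk(G)=\sup\{\rk(V,G)+1:V\subseteq_1G\}$ equals $\sup\{\rk(G_a,G)+1:a\in\omega\}$. One inequality is immediate: each stabilizer $G_a$ is an open subgroup containing $1_G$, so $G_a\subseteq_1G$. Hence every term $\rk(G_a,G)+1$ on the right-hand side also appears in the supremum defining $\rk(G)$, and the right-hand side is at most $\rk(G)$.

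For the reverse inequality, fix an arbitrary $V\subseteq_1G$. Since $G$ is a closed subgroup of $S_\infty$, the subgroups $\gnb{n}$ form a neighborhood basis of $1_G$, so there is an $n$ with $\gnb{n}\subseteq V$. By Proposition~\ref{basic property of rk}(1), since $V\supseteq\gnb{n}$, we get $\rk(V,G)\leq\rk(\gnb{n},G)$. Now $\gnb{n}=G_0\cap\cdots\cap G_{n-1}$, and applying Proposition~\ref{basic property of rk}(3) repeatedly (induction on $n$) gives
$$\rk(\gnb{n},G)\leq\max\{\rk(G_i,G):i<n\}.$$
The case $n=0$ is trivial: then $\gnb{0}=G$ and $\rk(G,G)=0$, which is at most any term on the right, and the right-hand side is nonempty since $\omega\neq\emptyset$. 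Therefore $\rk(V,G)+1\leq\max_{i<n}(\rk(G_i,G)+1)$, which is bounded by the right-hand supremum. Taking the supremum over $V$ yields $\rk(G)\leq\sup_a(\rk(G_a,G)+1)$.

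One point needs care: the value $\infty$. If some $\rk(V,G)=\infty$, the bound above forces some $\rk(G_i,G)=\infty$ as well. So the equality still holds under the natural convention $\infty+1=\infty$, provided Proposition~\ref{basic property of rk} is read as allowing $\infty$. This is the only subtle step, and it is handled by that convention. Otherwise the argument is just a direct combination of monotonicity and the intersection bound.
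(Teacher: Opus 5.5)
Your proposal is correct and follows essentially the same route as the paper: choose $\gnb{n}\subseteq V$, then use monotonicity and the intersection bound from Proposition~\ref{basic property of rk} to get $\rk(V,G)\le\max_{i<n}\rk(G_i,G)$; the reverse inequality is trivial because each $G_a\subseteq_1 G$. The only cosmetic difference is that the paper avoids your separate $n=0$ case by choosing $m>0$ from the start.
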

\begin{proof}
    For any $V\subseteq_1G$, we can find an $m>0$ such that $\gnb{m}\subseteq V$. Now by Proposition~\ref{basic property of rk} we have
    $$\rk(V,G)\leq\rk(\gnb{m},G)\leq\max\{\rk(G_0,G),\dots,\rk(G_{m-1},G)\}.$$
    Thus
    \begin{align*}
        \rk(G)&=\sup\{\rk(V,G)+1:V\subseteq_1G\}\\
        &\leq\sup\{\rk(G_a,G)+1:a\in\omega\}\leq\rk(G).
    \end{align*}
    This shows $\rk(G)=\sup\{\rk(G_a,G)+1:a\in\omega\}$.
\end{proof}

We are now ready to describe the connection between $\alpha$-balanced groups and \hier\alpha\space groups.

\begin{theorem}\label{rank and balanced, finite case}
    Suppose $G$ is a closed subgroup of $S_\infty$. Let $\mathcal{G}=(\gnb{n})\in\dgnb(G)$ and $T=\otr{G}{\omega}$, then for any natural number $l\geq 1$, we have
    $$\rho_T(n,\gnb{n}\cdot a)<\omega\cdot l\iff\rk(G_a,\gnb{n})\leq l$$
    for $(n,\gnb{n}\cdot a)\in T$.
\end{theorem}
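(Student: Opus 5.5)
We prove the equivalence by induction on $l\geq 1$. In both directions the main tool is Lemma~\ref{computation of open set rank}, which expresses $\rk(G_a,\gnb{n})\leq l$ as
$$\exists\,m\geq n\,\forall\,b\in\gnb{n}\cdot a\,\bigl(\rk(G_b,\gnb{m})<l\bigr).$$
The corresponding structural fact on the tree side comes from Propositions~\ref{tree} and~\ref{L_k(T)}. The nodes of $T$ above $(n,\gnb{n}\cdot a)$ at level $m$ are exactly the nonsingleton classes $(m,\gnb{m}\cdot b)$ with $b\in\gnb{n}\cdot a$. We also note that if $\gnb{m}\cdot b$ is a singleton (so it is not a node), then $\gnb{m}\subseteq G_b$, and hence $\rk(G_b,\gnb{m})=0$. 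This lets us treat non-nodes as having rank $-1$, i.e. as harmless.

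\textbf{Base case $l=1$.} Here $\rk(G_a,\gnb{n})\leq 1$ means there is $m\geq n$ with $\gnb{m}\subseteq G_b$ for all $b\in\gnb{n}\cdot a$. In other words, every $\gnb{m}$-class inside $\gnb{n}\cdot a$ is a singleton, so there are no nodes above $(n,\gnb{n}\cdot a)$ beyond level $m-1$. This forces $\rho_T(n,\gnb{n}\cdot a)\leq m-n<\omega$. For the converse, suppose $\rho_T(n,\gnb{n}\cdot a)=k<\omega$. Then no node at level $n+k+1$ lies above it. Since every $\gnb{n+k+1}\cdot b$ with $b\in\gnb{n}\cdot a$ is contained in $\gnb{n}\cdot a$, all such classes are singletons, and we may take $m=n+k+1$.

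\textbf{Inductive step $l\to l+1$.} Suppose first that $\rk(G_a,\gnb{n})\leq l+1$. Pick $m$ as in the lemma. By induction, every node $(m,\gnb{m}\cdot b)$ above $(n,\gnb{n}\cdot a)$ has $\rho_T<\omega\cdot l$. Proposition~\ref{L_k(T)} then gives
$$\rho_T(n,\gnb{n}\cdot a)\leq\sup\{\rho_T(t)+1\}+(m-n-1)\leq\omega\cdot l+\text{finite}<\omega\cdot(l+1).$$
Here I take Proposition~\ref{L_k(T)} as stated with $L_m(T)$ read as the level-$m$ nodes above $s$. If it is only stated for the whole level, I apply it in the subtree $T_s$.

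For the converse, suppose $\rho_T(n,\gnb{n}\cdot a)=\omega\cdot l+k$ with $k<\omega$ (this also covers smaller values). I claim that every node at level $n+k+1$ above it has rank $<\omega\cdot l$. Indeed, ranks strictly decrease along $<$. If a node at distance $k+1$ had rank $\geq\omega\cdot l$, then the chain of $k+1$ strict increases leading up to it would force the root's rank to be $\geq\omega\cdot l+k+1$, a contradiction. The induction hypothesis then gives $\rk(G_b,\gnb{m})\leq l$, i.e. $<l+1$, for all $b\in\gnb{n}\cdot a$ with $m=n+k+1$; for non-nodes this holds trivially with rank $0$. By Lemma~\ref{computation of open set rank}, $\rk(G_a,\gnb{n})\leq l+1$.

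\textbf{Main obstacle.} The hard step is making the indexing match, since the tree uses the relation ``$<\omega\cdot l$'' while the group rank uses ``$\leq l$''. In particular, the induction hypothesis must be applied to pairs $(m,b)$ where $b$ ranges over the whole orbit and $(m,\gnb{m}\cdot b)$ may fail to be a node. The remark above, that a singleton class gives $\gnb{m}\subseteq G_b$ and so $\rk(G_b,\gnb{m})=0$, closes this gap. It also uses $m\geq n$ together with the fact that $b\in\gnb{n}\cdot a$ implies $\gnb{m}\cdot b\subseteq\gnb{n}\cdot a$, so that level-$m$ classes of orbit points are exactly the nodes above $s$.
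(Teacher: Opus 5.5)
Your proposal is correct and matches the paper's proof essentially step for step. Both argue by induction on $l$, use Lemma~\ref{computation of open set rank} to translate $\rk(G_a,\gnb{n})\leq l$, and use Proposition~\ref{L_k(T)}, applied to the nodes above $(n,\gnb{n}\cdot a)$, to bound the tree rank. Your explicit handling of singleton classes $\gnb{m}\cdot b$, which give $\rk(G_b,\gnb{m})=0$, covers a point the paper passes over silently. The only small fix is to take $m>n$ rather than $m\geq n$ before invoking Proposition~\ref{L_k(T)}; this is possible by Proposition~\ref{basic property of rk}(1), exactly as the paper does.
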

\begin{proof}
    We will prove the statement by induction on $l$.

    (1) $l=1$.

    ($\Rightarrow$). Suppose $\rho_T(n,\gnb{n}\cdot a)=k<\omega$, then for any $b\in\gnb{n}\cdot a$, we can see that $\gnb{n+k+1}\cdot b$ is a singleton. Hence $G_b\supseteq\gnb{n+k+1}$. This means $\rk(G_b,\gnb{n+k+1})=0$. It follows from Lemma~\ref{computation of open set rank} that $\rk(G_a,\gnb{n})\leq1$.

    ($\Leftarrow$). Since $\rk(G_a,\gnb{n})\leq 1$, by Lemma~\ref{computation of open set rank} we can find an $m\geq n$, such that for any $b\in\gnb{n}\cdot a$, $G_b\supseteq\gnb{m}$ holds. Hence for any $b\in\gnb{n}\cdot a$, we have $\gnb{m}\cdot b=\{b\}$. Applying Proposition~\ref{L_k(T)}, we can deduce that
    $$\rho_T(n,\gnb{n}\cdot a)\leq\max\{0,m-n-1\}<\omega.$$

    (2) $l>1$.

    ($\Rightarrow$). Suppose $\rho_T(n,\gnb{n}\cdot a)=\omega\cdot(l-1)+k$, then for any $m>n+k$ and any $b\in\gnb{n}\cdot a$, $\rho_T(m,\gnb{m}\cdot b)<\omega\cdot(l-1)$ holds. From the induction hypothesis we obtain $\forall\,b\in\gnb{n}\cdot a\,(\rk(G_b,\gnb{m})\leq l-1)$. Thus by Lemma~\ref{computation of open set rank}, $\rk(G_a,\gnb{n})\leq l$ holds.

    ($\Leftarrow$). Since $\rk(G_a,\gnb{n})\leq l$, by definition and Proposition~\ref{basic property of rk}(1), we can find an $m>n$ such that for any $g\in\gnb{n}$, $\rk(G_a,g\gnb{m}g^{-1})\leq l-1$ holds. Next, using Proposition~\ref{basic property of rk}(2) we obtain $\rk(G_{g^{-1}(a)},\gnb{m})\le l-1$. This implies $\forall\,b\in\gnb{n}\cdot a\,(\rk(G_b,\gnb{m})\le l-1)$. According to the induction hypothesis, we have $\rho_T(m,\gnb{m}\cdot b)<\omega\cdot(l-1)$ for $b\in\gnb{n}\cdot a$. Then using Proposition~\ref{L_k(T)},
    \begin{align*}
        \rho_T(n,\gnb{n}\cdot a)&\leq\sup\{\rho_T(m,\gnb{m}\cdot b)+1:b\in\gnb{n}\cdot a\}+(m-n-1)\\
        &\leq\omega\cdot(l-1)+(m-n-1)<\omega\cdot l.
    \end{align*}
   This finishes the induction.
\end{proof}

\begin{theorem}\label{rank and balanced, infinite case}
    Suppose $G$ is a closed subgroup of $S_\infty$. Let $\mathcal{G}=(\gnb{n})\in\dgnb(G)$ and $T=\otr{G}{\omega}$, then for any ordinal $\alpha\geq\omega$, we have
    $$\rho_T(n,\gnb{n}\cdot a)<\omega\cdot(\alpha+1)\iff\rk(G_a,\gnb{n})\leq\alpha.$$
\end{theorem}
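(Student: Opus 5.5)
I plan to prove the equivalence by transfinite induction on $\alpha\geq\omega$, for all $a,n$ simultaneously, in the same way as Theorem~\ref{rank and balanced, finite case}. The finite case supplies the base information below $\omega$. The shift comes from the fact that ordinals $\ge\omega$ satisfy $1+\alpha=\alpha$: the finite case is $\rho_T<\omega\cdot l\iff\rk\le l$, which reads $\rho_T<\omega\cdot(l-1+1)$, while for infinite $\alpha$ the indices realign. The two tools are Lemma~\ref{computation of open set rank} on the balanced side, which reduces $\rk(G_a,\gnb{n})\le\alpha$ to the existence of $m\ge n$ with $\rk(G_b,\gnb{m})<\alpha$ for all $b\in\gnb{n}\cdot a$, and Propositions~\ref{tree} and~\ref{L_k(T)} on the tree side.

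\emph{Direction ($\Rightarrow$).} Suppose $\rho_T(n,\gnb{n}\cdot a)=\omega\cdot\gamma+k$ with $\gamma\le\alpha$ and $k<\omega$. Every node $(m,\gnb{m}\cdot b)$ with $m>n+k$ and $b\in\gnb{n}\cdot a$ has rank $<\omega\cdot\gamma$. A singleton class is not a node, and then $G_b\supseteq\gnb{m}$, so $\rk(G_b,\gnb m)=0$.
\begin{itemize}
\item If $\gamma=\beta+1$, each such node has rank $<\omega\cdot(\beta+1)$. When $\beta\ge\omega$, induction gives $\rk(G_b,\gnb{m})\le\beta<\alpha$. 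When $\beta<\omega$, Theorem~\ref{rank and balanced, finite case} gives $\rk(G_b,\gnb m)\le\beta+1<\omega\le\alpha$. In both cases Lemma~\ref{computation of open set rank} yields $\rk(G_a,\gnb n)\le\alpha$.
\item If $\gamma$ is a limit, each such node has rank $<\omega\cdot\beta_b+\omega=\omega\cdot(\beta_b+1)$ for some $\beta_b<\gamma$. Hence $\rk(G_b,\gnb m)\le\max(\beta_b,\beta_b+1)$, where the $+1$ only matters in the finite case. Since $\gamma$ is a limit and $\gamma\le\alpha$, this is $<\alpha$.
\end{itemize}

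\emph{Direction ($\Leftarrow$).} Lemma~\ref{computation of open set rank} gives $m\ge n$ with $\rk(G_b,\gnb m)=\beta_b<\alpha$ for every $b\in\gnb n\cdot a$. If $\beta_b\ge\omega$, induction gives $\rho_T(m,\gnb m\cdot b)<\omega\cdot(\beta_b+1)\le\omega\cdot\alpha$. If $1\le\beta_b<\omega$, the finite theorem gives rank $<\omega\cdot\beta_b<\omega\cdot\alpha$. If $\beta_b=0$, the class is a singleton and is not a node. Proposition~\ref{L_k(T)} applied at level $m$ then gives
$$\rho_T(n,\gnb n\cdot a)\le\sup\{\rho_T(m,\gnb m\cdot b)+1\}+(m-n-1)\le\omega\cdot\alpha+(m-n-1)<\omega\cdot(\alpha+1).$$
The case $m=n$ is trivial by monotonicity; alternatively one may enlarge $m$ using Proposition~\ref{basic property of rk}(1).

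The main obstacle is the bookkeeping at limit $\gamma$ and at the transition from finite to infinite ranks. The finite theorem is off by one relative to the infinite statement, and I must check that this discrepancy is absorbed, which relies on $\alpha\ge\omega$, so that $\beta+1<\alpha$ whenever $\beta<\omega$. One more point needs care. In ($\Rightarrow$) I use that all nodes at levels $>n+k$ below $(n,\gnb n\cdot a)$ have rank $<\omega\cdot\gamma$. This holds because a node at level $n+j$ has rank at most $\rho_T(n,\gnb n\cdot a)-j$ in the obvious sense: each step up the tree decreases the rank, so after $k+1$ steps the finite part $k$ is exhausted and the rank drops below $\omega\cdot\gamma$.
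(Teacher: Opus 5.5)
Your proposal is correct and is essentially the paper's proof. Both run a transfinite induction on $\alpha\geq\omega$: Lemma~\ref{computation of open set rank} reduces $\rk(G_a,\gnb{n})\le\alpha$ to nodes at some level $m$, Theorem~\ref{rank and balanced, finite case} handles finite ranks, and Proposition~\ref{L_k(T)} bounds the tree rank. The only difference is that you split cases by the shape of $\gamma$ in $\rho_T=\omega\cdot\gamma+k$ (and by whether $\beta_b$ is finite) rather than by whether $\alpha$ is $\omega$, a successor or a limit. This makes the finite-to-infinite transition explicit; the case $\gamma=0$, which your bullets skip, is covered by your singleton remark.
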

\begin{proof}
We will prove the statement by induction on $\alpha\geq\omega$.

(1) $\alpha=\omega$.

$(\Rightarrow)$. Without loss of generality, we can assume that $\rho_T(n,\gnb{n}\cdot a)=\omega\cdot\omega+k$ for some $k<\omega$. Then for any $m>n+k$ and any $b\in\gnb{n}\cdot a$, $\rho_T(m,\gnb{m}\cdot b)<\omega\cdot l_b$ for some $0<l_b<\omega$. By Theorem~\ref{rank and balanced, finite case}, we have $\rk(G_b,\gnb{m})\leq l_b$ for all $b\in\gnb{n}\cdot a$. Next applying Lemma~\ref{computation of open set rank}, we can see that $\rk(G_a,\gnb{n})\leq\omega$.

$(\Leftarrow)$. By Lemma~\ref{computation of open set rank} and Proposition~\ref{basic property of rk}(1), there exists an $m>n$ such that for all $b\in\gnb{n}\cdot a$, $\rk(G_b,\gnb{m})\leq l_b$ for some $0<l_b<\omega$. So by Theorem~\ref{rank and balanced, finite case} we have $\rho_T(m,\gnb{m}\cdot b)<\omega\cdot l_b$. Applying Proposition~\ref{L_k(T)}, we can obtain that
    \begin{align*}
        &\rho_T(n,\gnb{n}\cdot a)\\
        \leq\,&\sup\{\rho_T(m,\gnb{m}\cdot b)+1:b\in\gnb{n}\cdot a\}+(m-n-1)\\
        \leq\,&\omega\cdot\omega+(m-n-1)\\
        <\,&\omega\cdot(\omega+1).
    \end{align*}

    (2) $\alpha>\omega$ is a successor ordinal.

    $(\Rightarrow)$. Let $\alpha=\beta+1$. Note that there is an $m>n$ such that for all $b\in\gnb{n}\cdot a$, $\rho_T(m,\gnb{m}\cdot b)<\omega\cdot\alpha=\omega\cdot(\beta+1)$ holds. Then applying the induction hypothesis we can see that $\rk(G_b,\gnb{m})\leq\beta$ for each $b\in\gnb{n}\cdot a$. Thus $\rk(G_a,\gnb{n})\leq\alpha$ follows from Lemma~\ref{computation of open set rank}.

    $(\Leftarrow)$. By Lemma~\ref{computation of open set rank} and Proposition~\ref{basic property of rk}(1) again, we can choose an $m>n$ so that
    $$\forall\,b\in\gnb{n}\cdot a\,(\rk(G_b,\gnb{m})\leq\beta).$$
    Now by induction hypothesis we have $\rho_T(m,\gnb{m}\cdot b)<\omega\cdot(\beta+1)$. Hence using Proposition~\ref{L_k(T)} we obtain
    \begin{align*}
        \rho_T(n,\gnb{n}\cdot a)&\leq\sup\{\rho_T(m,\gnb{m}\cdot b)+1:b\in\gnb{n}\cdot a\}+(m-n-1)\\
        &\leq\omega\cdot(\beta+1)+(m-n-1)\\
        &<\omega\cdot(\alpha+1).
    \end{align*}

    (3) $\alpha>\omega$ is a limit ordinal.

    $(\Rightarrow)$. Choose an $m>n$ such that for any $b\in\gnb{n}\cdot a$, $\rho_T(m,\gnb{m}\cdot b)<\omega\cdot\alpha$. Then there is an ordinal $\omega\leq\beta_b<\alpha$ such that $\rho_T(m,\gnb{m}\cdot b)<\omega\cdot(\beta_b+1)$. From the induction hypothesis we obtain $\rk(G_b,\gnb{m})\leq\beta_b<\alpha$. Thus $\rk(G_a,\gnb{n})\leq\alpha$.

    $(\Leftarrow)$. Similarly, we can find an $m>n$ such that for any $b\in\gnb{n}\cdot a$, $\rk(G_b,\gnb{m})<\alpha$. Let $\beta_b<\alpha$ be an ordinal with $\rk(G_b,\gnb{m})\leq\beta_b$. Then from the induction hpothesis we can see that $\rho_T(m,\gnb{m}\cdot b)<\omega\cdot(\beta_b+1)$. Therefore,
    \begin{align*}
        \rho_T(n,\gnb{n}\cdot a)&\leq\sup\{\rho_T(m,\gnb{m}\cdot b)+1:b\in\gnb{n}\cdot a\}+(m-n-1)\\
        &\leq\sup\{\omega\cdot(\beta_b+1):b\in\gnb{n}\cdot a\}+(m-n-1)\\
        &\leq\omega\cdot\alpha+(m-n-1)<\omega\cdot(\alpha+1).
    \end{align*}
\end{proof}

\begin{corollary}
    Let $G$ be a closed subgroup of $S_\infty$,  $\omega\leq\alpha<\omega_1$ be an ordinal, $n\in\omega$. Then
    \begin{enumerate}
        \item[(1)] $G$ is \hier{n}\space iff $G$ is $(n+1)$-balanced;
        \item[(2)] $G$ is \hier\alpha\space iff $G$ is $\alpha$-balanced.
    \end{enumerate}
\end{corollary}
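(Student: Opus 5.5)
The plan is to combine Lemma~\ref{another highest orbit tree} with Lemma~\ref{computation of group rank} and Theorems~\ref{rank and balanced, finite case} and \ref{rank and balanced, infinite case}. Throughout, fix $\mathcal{G}=(\gnb{n})$ and $T=\otr{G}{\omega}$. By Lemma~\ref{another highest orbit tree}, $G$ is \hier\gamma\ iff $\rho(T)\le\omega\cdot\gamma$. Since $\rho(T)=\sup\{\rho_T(s)+1:s\in L_0(T)\}$ and the roots are the nodes $(0,G\cdot a)$ with $G\cdot a$ nonsingleton, this is equivalent to requiring $\rho_T(0,G\cdot a)<\omega\cdot\gamma$ for every such $a$. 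Here we use that $\omega\cdot\gamma$ is $0$ or a limit ordinal.

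On the other side, Lemma~\ref{computation of group rank} gives $\rk(G)=\sup\{\rk(G_a,G)+1:a\in\omega\}$, and $\gnb{0}=G$. When $G\cdot a$ is a singleton we have $G_a=G$, so $\rk(G_a,G)=0$. When $G\cdot a$ is not a singleton, $\rk(G_a,G)\ge1$.

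For (1), first handle $n=0$ separately. $G$ is \hier0\ iff $T=\emptyset$, i.e.\ every orbit is a singleton, i.e.\ $G=\{1_G\}$ (as $G\le S_\infty$). Likewise $G$ is $1$-balanced iff $\rk(G_a,G)=0$ for all $a$, i.e.\ $G_a=G$ for all $a$, which is the same condition.

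For $n\ge1$, $G$ is \hier{n}\ iff $\rho_T(0,G\cdot a)<\omega\cdot n$ for every nonsingleton root. By Theorem~\ref{rank and balanced, finite case} with $l=n$, this holds iff $\rk(G_a,G)\le n$ for those $a$. Together with the singleton orbits, which contribute rank $0$, this is equivalent to $\rk(G_a,G)+1\le n+1$ for all $a$. By Lemma~\ref{computation of group rank}, that says exactly $\rk(G)\le n+1$.

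For (2), with $\alpha\ge\omega$, $G$ is \hier\alpha\ iff $\rho_T(0,G\cdot a)<\omega\cdot\alpha$ for all roots. We want this to be equivalent to $\rk(G_a,G)<\alpha$ for all $a$, which by Lemma~\ref{computation of group rank} is $\rk(G)\le\alpha$. The argument splits by the form of $\alpha$.

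If $\alpha=\beta+1$ is a successor with $\beta\ge\omega$, then $\rho_T<\omega\cdot(\beta+1)$ iff $\rk\le\beta$, by Theorem~\ref{rank and balanced, infinite case}, and $\rk\le\beta$ is the same as $\rk<\alpha$.

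If $\alpha=\omega+k$ or more generally $\alpha=\beta+1$ with $\beta<\omega$, there is nothing to do: $\alpha\ge\omega$ rules this case out except through the successors above.

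If $\alpha$ is a limit, then $\rho_T(0,G\cdot a)<\omega\cdot\alpha$ iff $\rho_T(0,G\cdot a)<\omega\cdot(\beta+1)$ for some $\beta<\alpha$. Here $\beta$ may be taken either finite ($\ge1$) or $\ge\omega$. By Theorem~\ref{rank and balanced, finite case} or Theorem~\ref{rank and balanced, infinite case} respectively, this is iff $\rk(G_a,G)\le\beta'$ for some $\beta'<\alpha$, where $\beta'=\beta+1$ in the finite case, still $<\alpha$. That is, $\rk(G_a,G)<\alpha$.

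The only delicate point is bookkeeping at the seam between finite and infinite ranks. Finite $\rho$-levels below $\omega\cdot l$ correspond to $\rk\le l$ (an offset of one), whereas at and beyond $\omega$ the offset disappears. For limit $\alpha$ the offset is absorbed because $\alpha$ is closed under $+1$, and this is precisely why (1) reads $n+1$ while (2) reads $\alpha$.
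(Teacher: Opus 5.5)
Your proof is correct and follows the paper's argument. Both reduce, via Lemma~\ref{another highest orbit tree} and Lemma~\ref{computation of group rank}, to comparing $\rho_T(0,G\cdot a)$ with $\rk(G_a,G)$ root by root using Theorems~\ref{rank and balanced, finite case} and~\ref{rank and balanced, infinite case}. The only differences are organizational: you split (2) into successor versus limit $\alpha$, whereas the paper splits into $\alpha=\omega$ versus $\alpha>\omega$ via $\rho_T(0,G\cdot a)=\omega\cdot\beta_a+k_a$, and you explicitly handle $n=0$ and singleton orbits, which the paper passes over silently.
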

\begin{proof}
Let $\mathcal{G}=(\gnb{n})\in\dgnb(G)$ and $T=\otr{G}{\omega}$.

(1)$(\Rightarrow)$. By definition and Lemma~\ref{another highest orbit tree} we have $\rho(T)=\rho(\mathcal{G})\leq\omega\cdot n$. So we have $\rho_T(0,G\cdot a)<\omega\cdot n$ for each $a\in\omega$. Then from Theorem~\ref{rank and balanced, finite case} we can see that $\rk(G_a,G)\leq n$ for any $a\in\omega$. It follows from Lemma~\ref{computation of group rank} that $G$ is $(n+1)$-balanced.

$(\Leftarrow)$. For any $a\in\omega$, we have $\rk(G_a,G)\leq n$. So by Theorem~\ref{rank and balanced, finite case}, $\rho_T(0,G\cdot a)<\omega\cdot n$ holds. Hence $\rho(T)\leq\omega\cdot n$. This means $G$ is \hier n.

(2) Case 1. $\alpha=\omega$.

$(\Rightarrow)$. By Lemma~\ref{another highest orbit tree}, for any $a\in\omega$, we have $\rho_T(0,G\cdot a)<\omega\cdot\omega$. So we can find an $l_a\in\omega$ such that $\rho_T(0,G\cdot a)<\omega\cdot l_a$. Then Theorem~\ref{rank and balanced, finite case} implies that $\rk(G_a,G)\leq l_a$. Hence $G$ is $\omega$-balanced by Lemma~\ref{computation of group rank}.

$(\Leftarrow)$. For any $a\in\omega$, we have $\rk(G_a,G)=l_a<\omega$ for some $l_a$. So we can get $\rho_T(0,G\cdot a)\leq\omega\cdot l_a$ from Theorem~\ref{rank and balanced, finite case}. This shows that $\rho(T)\leq\omega\cdot\omega$, i.e., $G$ is \hier\omega.

Case 2. $\alpha>\omega$.

$(\Rightarrow)$. By definition and Lemma~\ref{another highest orbit tree} again we obtain $\rho(T)\leq\omega\cdot\alpha$. So for any $a\in\omega$, we have $\rho_T(0,G\cdot a)<\omega\cdot\alpha$. Let $\rho_T(0,G\cdot a)=\omega\cdot\beta_a+k_a$ with $k_a<\omega$. Note that $\beta_a<\alpha$. Now for those $a\in\omega$ with $\beta_a\geq\omega$, we have $\rk(G_a,G)\leq\beta_a$ by Theorem~\ref{rank and balanced, infinite case}. Therefore,
\begin{align*}
    \rk(G)&=\sup\{\rk(G_a,G)+1:a\in\omega\}\\
    &\leq\sup\{\omega,\beta_a+1:a\in\omega\land\beta_a\geq\omega\}\\
    &\leq\alpha,
\end{align*}
i.e., $G$ is $\alpha$-balanced.

$(\Leftarrow)$. By Case 1, we can assume that $\rk(G)>\omega$. For each $a\in\omega$, let $\rk(G_a,G)=\beta_a$. Then from Lemma~\ref{computation of group rank}, we have
$$\omega<\sup\{\beta_a+1:a\in\omega\}=\rk(G)\leq\alpha;$$
and for those $a\in\omega$ with $\beta_a\geq\omega$, we can obtain $\rho_T(0,G\cdot a)<\omega\cdot(\beta_a+1)$ by Theorem~\ref{rank and balanced, infinite case}. So
\begin{align*}
        \rho(T)&=\sup\{\rho_T(0,G\cdot a)+1:a\in\omega\}\\
        &\leq\sup\{\omega\cdot(\beta_a+1):a\in\omega\}\\
        &\leq\omega\cdot\alpha.
\end{align*}
This shows that $G$ is \hier\alpha.
\end{proof}

We can combine these results into a single statement.
\begin{corollary}\label{relation of balanced and CLI, closed subgroup of Sinfty}
    Let $G$ be a closed subgroup of $S_\infty$, $\alpha<\omega_1$. Then $G$ is \hier\alpha\space iff $G$ is $(1+\alpha)$-balanced.
\end{corollary}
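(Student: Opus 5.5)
This follows from the preceding corollary by ordinal arithmetic. I would split into two cases according to whether $\alpha$ is finite or infinite.

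\emph{Case $\alpha=n<\omega$.} Here $1+\alpha=1+n=n+1$. Part (1) of the preceding corollary states that $G$ is \hier{n} iff $G$ is $(n+1)$-balanced, which is exactly the claim.

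\emph{Case $\omega\le\alpha<\omega_1$.} Since $1+\alpha=\alpha$ for every infinite ordinal $\alpha$, being $(1+\alpha)$-balanced is the same as being $\alpha$-balanced. Part (2) of the preceding corollary then gives that $G$ is \hier\alpha\ iff $G$ is $\alpha$-balanced.

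The only thing to verify is the identity $1+\alpha=\alpha$ for $\alpha\ge\omega$. Write $\alpha=\omega+\gamma$; then $1+\alpha=(1+\omega)+\gamma=\omega+\gamma=\alpha$. There is no real obstacle; all the substantive work was done in Theorems~\ref{rank and balanced, finite case} and \ref{rank and balanced, infinite case}.
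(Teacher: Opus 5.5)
Your proposal is correct and is essentially the paper's argument: the paper obtains this corollary by combining parts (1) and (2) of the preceding corollary, using $1+n=n+1$ for finite $n$ and $1+\alpha=\alpha$ for infinite $\alpha$. Your check of the latter identity by writing $\alpha=\omega+\gamma$ and using associativity of ordinal addition is fine.
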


Since each non-archimedean Polish group $G$ embeds as a closed subgroup of $S_\infty$, we conclude the following theorem.
\begin{theorem}\label{relation of balanced and CLI, non-archimedean case}
    Let $G$ be a non-archimedean Polish group, $\alpha<\omega_1$. Then $G$ is \hier\alpha\space iff $G$ is $(1+\alpha)$-balanced.
\end{theorem}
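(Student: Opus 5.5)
The plan is to reduce the statement to Corollary~\ref{relation of balanced and CLI, closed subgroup of Sinfty} by transporting everything along an embedding into $S_\infty$. By the Becker--Kechris theorem, a non-archimedean Polish group $G$ admits a topological group isomorphism $\varphi:G\to H$ onto a closed subgroup $H$ of $S_\infty$. So the argument amounts to checking that both properties in the theorem, being \hier\alpha\space and being $(1+\alpha)$-balanced, are invariant under isomorphism of topological groups.

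For $(1+\alpha)$-balancedness the invariance is immediate. The relation $\rk(V,U)\le\beta$ is defined by induction on $\beta$ using only the group operations and the family of open neighborhoods of the identity. An induction on $\beta$ then shows $\rk(V,U)=\rk(\varphi(V),\varphi(U))$ for all $V,U\subseteq_1G$. Hence $\rk(G)=\rk(H)$.

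For \hier\alpha\space the invariance is similar. If $\mathcal{G}=(G_n)\in\dgnb(G)$, then $\varphi(\mathcal{G})=(\varphi(G_n))\in\dgnb(H)$. The map $gG_n\mapsto\varphi(g)\varphi(G_n)$ is a bijection from $X(\mathcal{G})$ onto $X(\varphi(\mathcal{G}))$ that carries each relation $E^{X(\mathcal{G})}_{G_n}$ onto $E^{X(\varphi(\mathcal{G}))}_{\varphi(G_n)}$. By Proposition~\ref{uniform reduction induces tree embedding}(2), the two orbit trees are isomorphic, so $\rho(\mathcal{G})=\rho(\varphi(\mathcal{G}))$. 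This gives $G$ \hier\alpha\space iff $H$ is \hier\alpha, since the definition does not depend on the choice of basis by the earlier independence theorem.

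Combining the two invariances with Corollary~\ref{relation of balanced and CLI, closed subgroup of Sinfty} applied to $H$ gives the theorem. I see no genuine obstacle here. The only step needing care is the induction showing that $\rk$ is preserved by $\varphi$, where one notes that $\varphi$ maps conjugates $gWg^{-1}$ to the conjugates $\varphi(g)\varphi(W)\varphi(g)^{-1}$ and is a bijection on open identity neighborhoods.
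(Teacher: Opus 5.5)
Your proposal is correct and follows the same route as the paper: the paper also obtains this theorem in one line, by embedding $G$ as a closed subgroup of $S_\infty$ (Becker--Kechris) and applying the corollary for closed subgroups of $S_\infty$. The difference is that you explicitly check that $\rk$ and $\rho(\mathcal{G})$ are invariant under topological group isomorphisms, which the paper leaves implicit.
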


We now turn to a problem that was asked by Allison and Panagiotopoulos in \cite{SAllisonAPana}.
\begin{problem}\label{problem of SP}
    Let $H$ be a closed normal subgroup of a topological group $G$. Is it true that
    \begin{equation*}
        \rk(G)\leq\sup\{\rk(V,H;H)+\rk(G/H):V\subseteq_1H\}?
    \end{equation*}
Here $\rk(V,H;H)$ means that the underlying topological group for function $\rk(V,H)$ is $H$.
\end{problem}
Inspired by this problem, we consider an analogous problem under the setting of \hier\alpha.
\begin{problem}\label{problem of DW}
    Let $N$ be a closed normal subgroup of a non-archimedean Polish group $G$. Suppose $N$ is \hier\alpha\space and $G/N$ is \hier\beta, $\alpha,\beta<\omega_1$. Is $G$ \hier{(\alpha+\beta)}?
\end{problem}
Assume that $N,G/N,G$ are proper \hier\alpha, \hier\beta, \hier\gamma, respectively. If Problem~\ref{problem of SP} has a positive answer, then applying Theorem~\ref{relation of balanced and CLI, non-archimedean case} yields
\begin{align*}
    1+\gamma&=\rk(G)\leq\sup\{\rk(V,H;H)+\rk(G/H):V\subseteq_1H\}\\
    &=\sup\{\rk(V,H;H)+1+\beta:V\subseteq_1H\}\\
    &\leq\rk(H)+\beta=1+\alpha+\beta.
\end{align*}
So $\gamma\leq\alpha+\beta$. This observation shows that if Problem~\ref{problem of SP} has a positive answer, then so does Problem~\ref{problem of DW}.

In the following section we devote ourselves to solve Problem~\ref{problem of DW}.

\section{Positive Results}\label{Examples and counterexamples}

In this section, we provide a positive answer to Problem~\ref{problem of DW} under an additional assumption.

Recall the definition of semidirect product of groups. For two non-archimedean Polish groups $G$, $H$ and a homomorphism $\rho:G\to\aut(H)$, the {\it semidirect product} $G\ltimes_\rho H$ is the set $G\times H$ together with the multiplication
$$(x_1,y_1)(x_2,y_2)=(x_1x_2,y_1\rho(x_1)(y_2)),$$
where $(x_1,y_1),(x_2,y_2)\in G\times H$. In addition, if the map $(x,y)\mapsto\rho(x)(y)$ is continuous, then the following proposition shows that, $G\ltimes_\rho H$ equipped with the product topology is also a non-archimedean Polish group.
\begin{proposition}\label{dgnb form of semidirect product}
    Let $G,H$ be two non-archimedean Polish groups, and $\rho:G\to\mathrm{Aut}(H)$ be a homomorphism. Let $A=G\ltimes_\rho H$ and endow it with the product topology. Suppose the map $(x,y)\mapsto\rho(x)(y)$ is continuous, then $(1_G,1_H)$ has a neighborhood basis consisting of open subgroups of the form $G'\times H'$, where $G'$ and $H'$ are open subgroups of $G$ and $H$ respectively.
\end{proposition}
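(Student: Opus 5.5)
Given open neighborhoods of the identity, I will exhibit a subgroup of the form $G'\times H'$ inside each one, show that it is a subgroup and open, and then show that such sets form a base. Since $G$ and $H$ are non-archimedean, and the product topology on $G\times H$ has basic open sets $U\times V$, it suffices to show the following: for every pair of open subgroups $G_0\le G$ and $H_0\le H$, there are open subgroups $G'\le G_0$ and $H'\le H_0$ such that $G'\times H'$ is a subgroup of $A$. It is automatically open, being a product of open sets.

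The key is to make $H'$ invariant under $\rho(G')$. The multiplication $(x_1,y_1)(x_2,y_2)=(x_1x_2,y_1\rho(x_1)(y_2))$ shows that $G'\times H'$ is closed under products provided $\rho(x)(H')\subseteq H'$ for all $x\in G'$. Inverses are $(x,y)^{-1}=(x^{-1},\rho(x^{-1})(y^{-1}))$, so the same invariance gives closure under inverses.

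So I set $H'=H_0$ and look for an open subgroup $G'\le G_0$ with $\rho(x)(H_0)\subseteq H_0$ for all $x\in G'$. The natural candidate is the stabilizer
$$S=\{x\in G:\rho(x)(H_0)=H_0\}.$$
This is a subgroup because $\rho$ is a homomorphism, and I take $G'=G_0\cap S$. The whole proof then reduces to showing that $S$ is open in $G$, and I expect this to be the main obstacle, since $H_0$ need not be compact and mere continuity at single points is not obviously enough.

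To prove $S$ is open, I use that $H$ is separable and $H_0$ is open, so $H/H_0$ is countable. For each coset $yH_0$, consider the map $x\mapsto\rho(x)(y)$. Fixing $y\in H_0$, continuity at $(1_G,y)$ of the map $(x,y)\mapsto\rho(x)(y)$ gives an open subgroup $W\le G$ and an open neighborhood $y H_1$ of $y$, with $H_1\le H_0$ an open subgroup, such that $\rho(W)(yH_1)\subseteq H_0$. But $H_0$ may be covered by infinitely many such cosets, so this does not immediately give a uniform $W$. That is the difficulty, and I do not yet see a resolution.

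The remedy I would try first is a Baire category argument. For each open subgroup $W$ in a countable basis $(W_k)$ of $G$, let
$$C_k=\{y\in H_0:\rho(W_k)(y)\subseteq H_0\}.$$
Each $C_k$ is closed, because $H_0$ is clopen and $\rho(x)$ is continuous. The sets $C_k$ cover $H_0$ by the continuity just described. Since $H_0$ is Polish, some $C_k$ has nonempty interior, containing $zH_1$ for some $z\in H_0$ and some open subgroup $H_1$. Now $C_k$ is closed under products, because $\rho(x)$ is a homomorphism and $H_0$ is a group. So $H_1\subseteq z^{-1}C_k$, and if $C_k$ is also closed under inverses then $H_1\subseteq C_k$. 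Using countably many cosets of $H_1$ in $H_0$ and repeating the argument, I would then aim to show that $C_k\supseteq H_0$ for a suitable $k$.

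If this fails, I would use the weaker but sufficient goal of finding an open subgroup $H'\le H_0$ invariant under some open $G'$, rather than insisting on $H'=H_0$. The candidate is $H'=\bigcap_{x\in W}\rho(x)(H_1)$, which is invariant under $W$. The remaining task would be to prove that $H'$ is open, again via the Baire category step.
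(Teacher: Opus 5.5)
There is a genuine gap. Your main line, taking $H'=H_0$ and showing that the stabilizer $S=\{x\in G:\rho(x)(H_0)=H_0\}$ is open, is false in general. The target of your Baire step fails for the same reason: $C_k\supseteq H_0$ means $\rho(W_k)(H_0)\subseteq H_0$, hence $W_k\subseteq S$.

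For a counterexample, let $H=\bigoplus_{i\in\omega}\mathbb{Z}$ be the finitely supported integer sequences with the discrete topology. Let $G=S_\infty$ act by $\rho(f)(x)=x\circ f^{-1}$. This is continuous, since $\rho(f)(x)$ depends only on $f$ restricted to the finite support of $x$. Let $H_0$ be the subgroup of sequences supported on the even numbers. Then $\rho(f)(H_0)=H_0$ iff $f$ maps the evens onto the evens. Every basic open subgroup $G_{\langle n\rangle}$ (the pointwise stabilizer of $\{0,\dots,n-1\}$) contains the transposition of $2n$ and $2n+1$. So $S$ is not open, and no $k$ gives $C_k\supseteq H_0$. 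Your fallback, $H'=\bigcap_{x\in W}\rho(x)(H_1)$, is left as an unproven ``remaining task'', so the proposal does not contain a complete argument.

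The fallback can be finished with what you already have. Each $C_k=\bigcap_{x\in W_k}\rho(x)^{-1}(H_0)$ is a closed subgroup; closure under inverses is automatic because each $\rho(x)$ is a homomorphism. If $W_k$ is a subgroup contained in $G_0$, then $C_k$ is $\rho(W_k)$-invariant, since $\rho(x)\rho(g)=\rho(xg)$ with $xg\in W_k$. So the Baire step directly gives an open, $W_k$-invariant subgroup $C_k\le H_0$, and $W_k\times C_k$ is the required subgroup. You should stop there rather than try to enlarge $C_k$ to $H_0$.

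No Baire category is needed at all, and your worry that continuity at a single point is insufficient is misplaced. The paper uses only continuity at $(1_G,1_H)$ to get open subgroups $G'\le G_0$ and $H_1\le H_0$ with $\rho(G')(H_1)\subseteq H_0$. It then takes $H'$ to be the subgroup generated by $\rho(G')(H_1)$. This $H'$ contains $H_1$, so it is open. It lies in $H_0$, since $H_0$ is a subgroup. It is $\rho(G')$-invariant because $G'$ is a group. In short, the paper enlarges $H_1$ to an invariant hull inside $H_0$, whereas you shrank $H_0$ to an invariant core, which is what forced the harder openness question on you.
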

\begin{proof}
    Let $(G_n)\in\dgnb(G)$, $(H_n)\in\dgnb(H)$. Fix an $n\in\omega$, and define $H_n'=\bigcup_{k\in\omega}(\rho(G_n)(H_n))^k$. We claim that $G_n\times H_n'$ is a subgroup of $G\times H$.

    For $(g,a),(h,b)\in G_n\times H_n'$, there are $k,l\in\omega$ such that
    \begin{align*}
        a&=\rho(x_1)(y_1)\cdots\rho(x_k)(y_k), \\
        b&=\rho(x_1')(y_1')\cdots\rho(x_l')(y_l')
    \end{align*}
    for some $x_1,\dots,x_k,x_1',\dots,x_l'\in G_n$ and $y_1,\dots,y_k,y_1',\dots,y_l'\in H_n$. Thus we have
    \begin{align*}
        (g,a)(h,b)&=(gh,a\rho(g)(\rho(x_1')(y_1')\cdots\rho(x_l')(y_l'))) \\
                  &=(gh,a\rho(gx_1')(y_1')\cdots\rho(gx_l')(y_l')) \\
                  &\in G_n\times(\rho(G_n)(H_n))^{k+l},
    \end{align*}
    and
    \begin{align*}
        (g,a)^{-1}&=(g^{-1},\rho(g^{-1})(\rho(x_k)(y_k)^{-1}\cdots\rho(x_1)(y_1)^{-1})) \\
                  &=(g^{-1},\rho(g^{-1}x_k)(y_k^{-1})\cdots\rho(g^{-1}x_1)(y_1^{-1})) \\
                  &\in G_n\times(\rho(G_n)(H_n))^k.
    \end{align*}
    This proves the claim.

Now since $(x,y)\mapsto\rho(x)(y)$ is continuous, for each $n\in\omega$ there is a $k\ge n$ such that $\rho(G_k)(H_k)\subseteq H_n$. This implies that $G_k\times H_k'\subseteq G_n\times H_n$. Also, note that $G_k\times H_k'$ is open, so we can find an $l\in\omega$ such that $G_l\times H_l\subseteq G_k\times H_k'$.
Using these facts interchangeably, we can find a strictly increasing sequence $(k_n)$ such that $(G_{k_n}\times H_{k_n}')\in\mathrm{dgnb}(A)$.
So $(G_{k_n}\times H_{k_n}')$ is the desired neighborhood basis of $(1_G,1_H)$.
\end{proof}


Next we give a useful lemma in estimating the rank of the semidirect product of non-archimedean Polish groups.
\begin{lemma}\label{rank of concatenated trees}
    Let $(T,<)$ be a well-founded tree, $\alpha<\omega_1$ and $T_1\subseteq T$. Suppose $T_1$ is downward closed under $<$, i.e.,
    $$\forall\,x\in T_1\,\forall\,y\in T\left(y<x\implies y\in T_1\right).$$
    Assume additionally that for any $y\in T\setminus T_1$ we have $\rho_T(y)<\alpha$. Then for $x\in T_1$, we have
    $$\rho_T(x)\leq\alpha+\rho_{T_1}(x).$$
    In particular, $\rho(T)\leq\alpha+\rho(T_1)$.
\end{lemma}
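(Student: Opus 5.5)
We argue by well-founded induction on $\rho_{T_1}(x)$, i.e. induction on the rank of $x$ in the tree $T_1$. Since $T_1$ is well-founded as a subset of $T$ (with the inherited order), $\rho_{T_1}$ is defined. Fix $x\in T_1$ and assume the inequality $\rho_T(x')\le\alpha+\rho_{T_1}(x')$ holds for every $x'\in T_1$ with $x<x'$.

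By the definition of the rank function,
$$\rho_T(x)=\sup\{\rho_T(y)+1:x<y\land y\in T\}.$$
We split the $y$'s into two kinds.

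If $y\in T\setminus T_1$, then $\rho_T(y)<\alpha$ by hypothesis, so $\rho_T(y)+1\le\alpha\le\alpha+\rho_{T_1}(x)$.

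If $y\in T_1$ with $x<y$, then by the induction hypothesis $\rho_T(y)\le\alpha+\rho_{T_1}(y)$. Since $\rho_{T_1}(y)<\rho_{T_1}(x)$, we get $\rho_{T_1}(y)+1\le\rho_{T_1}(x)$, and hence
$$\rho_T(y)+1\le\alpha+(\rho_{T_1}(y)+1)\le\alpha+\rho_{T_1}(x).$$
This uses only associativity of ordinal addition and monotonicity of $\alpha+\cdot$ in the right argument.

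Taking the supremum over both kinds gives $\rho_T(x)\le\alpha+\rho_{T_1}(x)$. Downward closure of $T_1$ was not needed here; it is used only in the final step.

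For the ``in particular'' clause, we need $\rho(T)=\sup\{\rho_T(s)+1:s\in L_0(T)\}$.

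If a root $s$ lies in $T_1$, then $\rho_T(s)+1\le\alpha+\rho_{T_1}(s)+1\le\alpha+\rho(T_1)$.

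If a root $s$ lies outside $T_1$, then $\rho_T(s)<\alpha$, so $\rho_T(s)+1\le\alpha\le\alpha+\rho(T_1)$.

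Hence $\rho(T)\le\alpha+\rho(T_1)$. Alternatively, one can take the supremum over all $s\in T$ directly, using the same two cases. Either way the downward-closure assumption is not really needed, so I expect no genuine obstacle. The only point of care is the ordinal arithmetic step $\alpha+\rho_{T_1}(y)+1\le\alpha+\rho_{T_1}(x)$, which holds because addition on the left preserves $\le$ and $\rho_{T_1}(y)+1\le\rho_{T_1}(x)$.
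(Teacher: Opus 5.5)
Your proof is correct and follows the paper's argument: induction on $\rho_{T_1}(x)$, where the successors $y$ of $x$ are split into those in $T\setminus T_1$ (contributing at most $\alpha$) and those in $T_1$ (handled by the induction hypothesis), and then the supremum defining $\rho(T)$ is split the same way. You write out the inductive step that the paper leaves as ``follows easily,'' and your remark that the downward-closure hypothesis is never actually used is accurate.
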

\begin{proof}
    The proof goes by induction on $\rho_{T_1}(x)$ for $x\in T_1$. First, if $\rho_{T_1}(x)=0$, then either $\rho_T(x)=0$, which obviously satisfies the inequality; or $\rho_T(x)>0$, then
    $$\rho_T(x)=\sup\{\rho_T(y)+1:y>x\land y\in T\}.$$
    Since $\rho_{T_1}(x)=0$, we can see that $y>x\land y\in T$ implies $y\in T\setminus T_1$. So $\rho_T(y)+1\leq\alpha$. This shows that $\rho_T(x)\leq\alpha=\alpha+\rho_{T_1}(x)$. The indution step of the proof follows easily.

    Next we estimate the rank of the tree $T$. Clearly, we have
    \begin{align*}
        \rho(T)&=\sup\{\rho_T(x)+1:x\in T\}\\
        &=\sup\left(\{\rho_T(x)+1:x\in T_1\}\cup\{\rho_T(x)+1:x\in T\setminus T_1\}\right)\\
        &\leq\sup\{\alpha,\alpha+\rho_{T_1}(x)+1:x\in T_1\}\\
        &=\alpha+\rho(T_1).
    \end{align*}
\end{proof}

\begin{theorem}\label{positive answer for semidirect product}
    Let $G,H$ be two non-archimedean Polish groups, and let $A=G\ltimes_\rho H$. Suppose that $G$ and $H$ are \hier\alpha\ and \hier\beta, respectively, for some $\alpha,\beta<\omega_1$. Choose $\mathcal{G}=(G_n)\in\dgnb(G)$ and $\mathcal{H}=(H_n)\in\dgnb(H)$ such that $\mathcal{A}=(A_n)=(G_n\times H_n)\in\dgnb(A)$. If, in addition, we have
    \begin{equation}
    \begin{aligned}
        \forall\,n\,\forall\,x\in G_n\,\forall\,y\in H\left(\rho(x)(yH_n)=yH_n\right).
    \end{aligned}
    \label{addition condition}
    \end{equation}
    Then $A$ is \hier{(\beta+\alpha)}.
\end{theorem}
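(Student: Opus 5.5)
The plan is to apply Lemma~\ref{rank of concatenated trees} to $T=\otr{A}{X(\mathcal{A})}$ with threshold $\omega\cdot\beta$. We need a downward closed $T_1\subseteq T$ with $\rho(T_1)\le\omega\cdot\alpha$, such that every node of $T\setminus T_1$ has $\rho_T$-rank $<\omega\cdot\beta$. Then $\rho(\mathcal{A})=\rho(T)\le\omega\cdot\beta+\omega\cdot\alpha=\omega\cdot(\beta+\alpha)$, which is exactly the claim.

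\emph{Choice of $T_1$.} Let $\pi:X(\mathcal{A})\to X(\mathcal{G})$ be the projection $(x,y)A_n\mapsto xG_n$. It is well defined because $A_n=G_n\times H_n$. A direct computation with the multiplication $(g,h)(x,y)=(gx,h\rho(g)(y))$ shows that $\pi$ is $G$-equivariant, with $A_m$ acting through its first coordinate $G_m$. Hence $\pi$ maps each $E^{X(\mathcal{A})}_{A_m}$-class $C$ onto an $E^{X(\mathcal{G})}_{G_m}$-class $\pi(C)$. Let $T_1$ be the set of nodes $(m,C)\in T$ for which $\pi(C)$ is not a singleton.

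\emph{Properties of $T_1$.} If $(m',C')<(m,C)$, then $C'\supseteq C$, so $\pi(C')$ is non-singleton as well; thus $T_1$ is downward closed. The map $(m,C)\mapsto(m,\pi(C))$ sends $T_1$ into $\otr{G}{X(\mathcal{G})}$ and is order-preserving. Therefore $\rho(T_1)\le\rho(\mathcal{G})\le\omega\cdot\alpha$.

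\emph{Nodes outside $T_1$.} Take $(m,C)\notin T_1$, so $\pi(C)=\{xG_n\}$ is a single point and $C$ lies in the fiber $\pi^{-1}(xG_n)\cap A/A_n$. Every node above $(m,C)$ is a subclass of $C$, hence lies in the same fiber. Each coset in this fiber can be written as $(x,y)A_n=\{(xg,y\rho(x)(h)):g\in G_n,h\in H_n\}$. Define
$$\theta\bigl((x,y)A_n\bigr)=y\,\rho(x)(H_n)\in H/\rho(x)(H_n).$$
This is a bijection of the fiber onto a countable discrete Polish $H$-space, since $\rho(x)(H_n)$ is an open subgroup of $H$.

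I then want to show that $\theta$ is a homomorphism from $E_{A_k}$ to $E_{H_k}$ restricted to the fiber, for every $k\ge m$. Let $(g,h)\in A_k$ move $(x,y)A_n$ within the fiber. Then $(g,h)(x,y)=(gx,h\rho(g)(y))$. Rewriting $gx=xg'$ with $g'\in G_n$ and absorbing the factor $\rho(g)$ into the coset should give $\theta$-image $h\,y\,\rho(x)(H_n)$. This absorption is exactly where condition~\eqref{addition condition}, $\rho(g)(yH_n)=yH_n$, is used.

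Given this, Proposition~\ref{uniform reduction induces tree embedding}(1) provides an order-preserving, level-preserving map from $T_{(m,C)}$ into the orbit tree of $\mathcal{H}$ on $H/\rho(x)(H_n)$. By \cite[Corollary 3.12]{DingW}, that orbit tree has rank at most $\rho(\mathcal{H})\le\omega\cdot\beta$. So the image node has rank $<\omega\cdot\beta$, and consequently $\rho_T(m,C)<\omega\cdot\beta$.

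The main obstacle is the coset bookkeeping in this last step. The key point is verifying that $\theta$ really is a homomorphism: the twist $\rho(g)$ applied to $y$ has to be absorbed into the coset $y\rho(x)(H_n)$, and condition~\eqref{addition condition} must be used precisely there. If that fails as written, I would instead normalize representatives so that $x$ is fixed in each fiber and use $yH_n$ directly, again relying on \eqref{addition condition}.
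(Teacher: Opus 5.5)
Your proposal is correct and follows the paper's proof. It uses the same $T_1$ (nodes whose projection to the $G$-side is non-singleton), the same map $\theta((x,y)A_n)=y\,\rho(x)(H_n)$, and the same concatenation lemma, with only cosmetic streamlining: an order-preserving map bounds $\rho(T_1)$ where the paper proves a rank equality, and you apply Corollary 3.12 directly where the paper uses Lemma 3.11 plus a successor-ordinal remark. The step you hedge on does go through exactly as in the paper: setting $g'=x^{-1}gx\in G_n$, one gets $\rho(g)(y)\rho(x)(H_n)=\rho(x)\bigl(\rho(g')(\rho(x^{-1})(y))H_n\bigr)=\rho(x)\bigl(\rho(x^{-1})(y)H_n\bigr)=y\,\rho(x)(H_n)$, by applying \eqref{addition condition} to $g'$ and $\rho(x^{-1})(y)$ rather than to $g$ and $y$, so your fallback is unnecessary.
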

\begin{proof}
    Fix an $n\in\omega$. Since $G_n\times H_n$ is a subgroup of $A$, we can see that $\rho(G_n)(H_n)=H_n$. For $(x_1,y_1)A_n,(x_2,y_2)A_n\in A/A_n$, we have
    \begin{equation}
        \begin{aligned}
            &(x_1,y_1)A_n=(x_2,y_2)A_n\\
            \iff&(x_2,y_2)^{-1}(x_1,y_1)=\left(x_2^{-1}x_1,\rho(x_2^{-1})(y_2^{-1}y_1)\right)\in A_n\\
            \iff&x_2^{-1}x_1\in G_n\land\rho(x_2^{-1})(y_2^{-1}y_1)\in H_n.
        \end{aligned}
        \label{coset equality}
    \end{equation}
    So for any $(a,b)\in A$ we have
    \begin{equation}
        \begin{aligned}
            &(a,b)(x_1,y_1)A_n=(x_2,y_2)A_n\\
            \iff&(ax_1,b\rho(a)(y_1))A_n=(x_2,y_2)A_n\\
            \iff&x_2^{-1}ax_1\in G_n\land\rho(x_2^{-1})(y_2^{-1}b\rho(a)(y_1))\in H_n.
        \end{aligned}
        \label{action of A equality}
    \end{equation}

We define
    $$T_1=\{(p,A_p\cdot (x,y)A_n)\in T_{\mathcal{A}}^{A/A_n}:G_p\cdot xG_n\itxr{is not a singleton}\}.$$
To see that $T_1$ is well defined, let $A_p\cdot (x_1,y_1)A_n=A_p\cdot (x_2,y_2)A_n$. There exists $(a,b)\in A_p$ with $(a,b)(x_1,y_1)A_n=(x_2,y_2)A_n$.
By (\ref{action of A equality}), we get $ax_1G_n=x_2G_n$, and hence $G_p\cdot x_1G_n=G_p\cdot x_2G_n$. This implies that $T_1$ is well defined.

    Now, for any $(p,A_p\cdot(x,y)A_n)\in T\setminus T_1$, from the definition of $T_1$ we can see that $G_{p}\cdot xG_n=\{xG_n\}.$
    So for any $a\in G_{p}$, we have $axG_n=xG_n$. Next using (\ref{action of A equality}), we can see that for $(x_1,y_1)A_n,(x_2,y_2)A_n\in A_{p}\cdot(x,y)A_n$ and $m\in\omega$,
    \begin{align*}
        &(x_1,y_1)A_nE_{A_{m+p}}^{A_{p}\cdot(x,y)A_n}(x_2,y_2)A_n\\
        \iff&\exists\,(a,b)\in A_{m+p}\left((a,b)(x_1,y_1)A_n=(x_2,y_2)A_n\right)\\
        \iff&\exists\,(a,b)\in A_{m+p}\left(ax_1G_n=x_2G_n\land y_2^{-1}b\rho(a)(y_1)\in\rho(x_2)(H_n)\right).
    \end{align*}
    Note that $x_2G_n\in G_{p}\cdot xG_n=\{xG_n\}$, so $x_2G_n=xG_n$. Thus
    \begin{align*}
        \rho(x_2)(H_n)&=\rho(x_2)(\rho(G_n)(H_n))=\rho(x_2G_n)(H_n)\\
        &=\rho(xG_n)(H_n)=\rho(x)(H_n).
    \end{align*}
    Let $H_n'=\rho(x)(H_n)$. Then for any $a\in G_{p}$, noting that $axG_n=xG_n$, so $x^{-1}ax\in G_n$. Thus by (\ref{addition condition}) we have
    \begin{align*}
        \rho(a)(y_1)H_n'&=\rho(a)(y_1)\rho(x)(H_n)=\rho(x)\left(\rho(x^{-1}a)(y_1)H_n\right)\\
        &=\rho(x)\left(\rho(x^{-1}a)(y_1)\rho(x^{-1}ax)(H_n)\right)\\
        &=\rho(x)\circ\rho(x^{-1}ax)\left(\rho(x^{-1})(y_1)H_n\right)\\
        &=\rho(x)\left(\rho(x^{-1})(y_1)H_n\right)=y_1H_n'.
    \end{align*}
    Hence
    \begin{align*}
        &(x_1,y_1)A_nE_{A_{m+p}}^{A_{p}\cdot(x,y)A_n}(x_2,y_2)A_n\\
        \iff&\exists\,(a,b)\in A_{m+p}\left(ax_1G_n=x_2G_n\land y_2H_n'=by_1H_n'\right)\\
        \iff&\exists\,b\in H_{m+p}\left(y_2H_n'=by_1H_n'\right)
    \end{align*}
    holds for any $m\in\omega$. This shows $\theta:A_{p}\cdot(x,y)A_n\to H_{p}\cdot yH_n'$ defined as
    $$\theta((x_1,y_1)A_n)=y_1H_n'$$
    is a reduction from $E_{A_{m+p}}^{A_{p}\cdot(x,y)A_n}$ to $E_{H_{m+p}}^{H_{p}\cdot yH_n'}$ for each $m\in\omega$.
    Now let $\mathcal{A}'=(A_{m+p})_{m\in\omega}$, $\mathcal{H}'=(H_{m+p})_{m\in\omega}$.
    By Proposition~\ref{uniform reduction induces tree embedding}, we can see that $\otr{A'}{A_{p}\cdot(x,y)A_n}$ is isomorphic to $T_{\mathcal{H'}}^{H_{p}\cdot yH_n'}$.
    Since $H_{p}$ is a clopen subgroup of $H$, we can see that $H_{p}$ is also \hier\beta. Then according to \cite[Lemma 3.11]{DingW}, there is a $k\in\omega$ such that $\rho(\otr{H'}{H_{p}\cdot yH_n'})\leq\rho(\otr{H'}{H_{p}/H_{k+p}})\leq\omega\cdot\beta$. But $\rho(\otr{H'}{H_{p}/H_{k+p}})$ is a successor ordinal, so $\rho(\otr{H'}{H_{p}/H_{k+p}})<\omega\cdot\beta$.
    This implies that $\rho(\otr{H'}{H_{p}\cdot yH_n'})<\omega\cdot\beta$. Therefore,
    $$\rho_T(p,A_p\cdot(x,y)A_n)=\rho(\otr{A'}{A_p\cdot(x,y)A_n})-1=\rho(\otr{H'}{H_p\cdot yH_n'})-1<\omega\cdot\beta.$$

    Next, let $S=\otr{G}{G/G_n}$. We will show the following equality
    $$\rho_{T_1}(p,A_p\cdot(x,y)A_n)=\rho_S(p,G_p\cdot xG_n)$$
    by induction on $\rho_{T_1}(p,A_p\cdot(x,y)A_n)$ for $(p,A_p\cdot(x,y)A_n)\in T_1$.

    If $\rho_{T_1}(p,A_p\cdot(x,y)A_n)=0$, then $(p,G_p\cdot xG_n)\in S$. Moreover, for any $x'G_n\in G_p\cdot xG_n$, we have $(p+1,A_{p+1}\cdot(x',y')A_n)\notin T_1$ for some $y'\in H$ with $(x',y')A_n\in A_p\cdot(x,y)A_n$, thus $G_{p+1}\cdot x'G_n$ is a singleton. This shows $\rho_S(p,G_p\cdot xG_n)=0$.

    Now for $(p,A_p\cdot(x,y)A_n)\in T_1$, suppose we have proved the equality for all $(q,O')>(p,A_p\cdot(x,y)A_n)$ in $T_1$. Then we have
    \begin{align*}
        &\rho_{T_1}(p,A_p\cdot(x,y)A_n)\\
        =&\sup\{\rho_{T_1}(p+1,A_{p+1}\cdot(x',y')A_n)+1:(x',y')A_n\in A_p\cdot(x,y)A_n\}\\
        =&\sup\{\rho_S(p+1,G_{p+1}\cdot x'G_n)+1:(x',y')A_n\in A_p\cdot(x,y)A_n\}\\
        =&\sup\{\rho_S(p+1,G_{p+1}\cdot x'G_n)+1:x'G_n\in G_p\cdot xG_n\}\\
        =&\rho_S(p,G_p\cdot xG_n).
    \end{align*}
    This finishes the induction.

    From the equality above we can see $\rho(T_1)=\rho(S)\leq\omega\cdot\alpha$. Then Lemma~\ref{rank of concatenated trees} gives $\rho(\otr{A}{A/A_n})\leq\omega\cdot\beta+\rho(\otr{G}{G/G_n})\leq\omega\cdot(\beta+\alpha)$.

    Finally, since $n$ is arbitrary, we have $\rho(\mathcal{A})\leq\omega\cdot(\beta+\alpha)$. This implies that $A$ is $(\beta+\alpha)$-CLI.
\end{proof}

We will give two applications of Theorem~\ref{positive answer for semidirect product} below. The first is the wreath products of non-archimedean Polish groups.

Let $G,H$ be two closed subgroups of $S_\infty$. Recall that the {\it wreath product} of $H$ by $G$ on $\omega$ (in symbol $G\wr H$) is the semidirect product $A=G\ltimes_\rho H^\omega$, where the homomorphism $\rho:G\to\aut(H^\omega)$ is given by
    $$\rho(g)(F)=F\circ g^{-1}$$
    for $(g,F)\in A$. Clearly, $A$ is a non-archimedean Polish group. Take $\mathcal{G}=(\gnb{n})\in\dgnb(G)$ and $\mathcal{H}=(H_n)\in\dgnb(H)$. Let
    $$\mathcal{A}=(A_n)=\left(\gnb{n}\times\prod_{i<n}H_n\times\prod_{i\geq n}H\right)\in\dgnb(A).$$
    We check that $G\wr H$ satisfies the condition~(\ref{addition condition}):

    Fix an $n\in\omega$, and let $K_n=\prod_{i<n}H_n\times\prod_{i\geq n}H$. Take any $g\in\gnb{n}$ and $F\in H^\omega$. Note that $\rho(g)(K_n)=K_n$. Then we have
    \begin{align*}
        \rho(g)(FK_n)=\rho(g)(F)\rho(g)(K_n)=\rho(g)(F)K_n=FK_n.
    \end{align*}
This gives the following result.

\begin{theorem}
    Let $G$ and $H$ be two closed subgroups of $S_\infty$. Suppose $G$ and $H$ are \hier\alpha\ and \hier\beta, respectively. Then $G\wr H$ is \hier{(\beta+\alpha)}.
\end{theorem}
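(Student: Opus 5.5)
The plan is to apply Theorem~\ref{positive answer for semidirect product} directly to $A=G\wr H=G\ltimes_\rho H^\omega$, with $\rho(g)(F)=F\circ g^{-1}$. This requires three checks: that $G$ and $H^\omega$ have the correct complexities, that $\mathcal{A}=(A_n)$ is a legitimate element of $\dgnb(A)$ of the product form $G_n\times K_n$ with $(G_n)=(\gnb{n})\in\dgnb(G)$ and $(K_n)\in\dgnb(H^\omega)$, and that condition~(\ref{addition condition}) holds. The last point was verified just before the statement: for $g\in\gnb{n}$ we have $\rho(g)(K_n)=K_n$, hence $\rho(g)(FK_n)=FK_n$. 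So the remaining work is the first two items.

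For the neighborhood basis, $K_n=\prod_{i<n}H_n\times\prod_{i\ge n}H$ is an open subgroup of $H^\omega$. These subgroups decrease, satisfy $K_0=H^\omega$, and form a basis at the identity in the product topology. Hence $(K_n)\in\dgnb(H^\omega)$, and $(\gnb{n}\times K_n)$ is a decreasing sequence of open sets forming a basis of $(1_G,1)$ in the product topology, with $A_0=A$. Each $A_n$ is a subgroup: $g\in\gnb{n}$ fixes every $i<n$, so $F\circ g^{-1}$ agrees with $F$ in coordinates below $n$ up to a permutation fixing them pointwise, giving $\rho(\gnb{n})(K_n)=K_n$. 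The product and inverse formulas then stay inside $A_n$. Continuity of $(g,F)\mapsto F\circ g^{-1}$ is standard, so $A$ is a non-archimedean Polish group as stated in the paragraph before the theorem.

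The main obstacle is showing that $H^\omega$ is \hier\beta\ whenever $H$ is. I would argue through Theorem~\ref{relation of balanced and CLI, non-archimedean case}, reducing the claim to showing that $H^\omega$ is $(1+\beta)$-balanced. Alternatively, I would argue directly with orbit trees. Every coset space $H^\omega/K_n$ is canonically $(H/H_n)^n$, and since $K_m$ acts coordinatewise, the action on it is a product action. The orbit tree of $(K_m)$ on $(H/H_n)^n$ is then the tree of the $n$-fold product of $H$ with the sequence $(H_m)$ acting on $(H/H_n)^n$. This reduces the problem to a finite power $H^n$, where the cited \cite[Lemma 4.3]{DingW}, as used in Lemma~\ref{another highest orbit tree}, gives that the tree rank of the product action on $X^n$ equals that on $X$. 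Concretely, if $H\le S_\infty$, then $H^n$ acts on $\omega\times n$, and its orbit tree on $\omega\times n$ is a disjoint union of $n$ copies of $\otr{H}{\omega}$ when one uses the basis $(\gnb{m}^n)$. Lemma~\ref{another highest orbit tree} then gives $\rho(\mathcal{H}^n)=\rho(\otr{H}{\omega})\le\omega\cdot\beta$.

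To pass between bases, I would note that \hier\beta-ness is independent of the chosen basis; this is implicit in the definition of \hier\beta\ after the theorem from \cite{DingW}. Taking the supremum over $n$ gives $\rho$ of the orbit tree of $(K_m)$ on $X((K_m))$ at most $\omega\cdot\beta$, so $H^\omega$ is \hier\beta. Theorem~\ref{positive answer for semidirect product}, applied with $G$ being \hier\alpha\ and $H^\omega$ being \hier\beta, then yields that $G\wr H$ is \hier{(\beta+\alpha)}.
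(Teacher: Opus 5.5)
Your overall route matches the paper's. You apply Theorem~\ref{positive answer for semidirect product} to $G\ltimes_\rho H^\omega$ with the basis $(\gnb{n}\times K_n)$ and the already-verified condition~(\ref{addition condition}), together with the fact that $H^\omega$ is $\beta$-CLI. Your check that $(\gnb{n}\times K_n)\in\dgnb(A)$ is fine. The difference is that the paper gets the last fact by citing \cite[Theorem 4.6(1)]{DingW}, while you prove it by hand, and that hand proof contains a false step.

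The $(K_m)$-orbit tree on $H^\omega/K_n\cong(H/H_n)^n$ is not the orbit tree of $(H_m^n)_m$ on $(H/H_n)^n$.
\begin{itemize}
\item For $m<n$, the group $K_m=\prod_{i<m}H_m\times\prod_{i\ge m}H$ acts on the coordinates $m\le i<n$ by all of $H$. So on the first $n$ levels its classes are coarser than the $H_m^n$-classes.
\item The identity then only gives an order-preserving map from the product tree into the $(K_m)$-tree. That is the wrong direction for an upper bound.
\item The two trees coincide only from level $n$ on, where $K_m$ acts as $H_m^n$. So ``rank $\le\omega\cdot\beta$'' does not follow as you wrote it.
\end{itemize}

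The repair is short. Take $\beta\ge1$, since $\beta=0$ is trivial.
\begin{itemize}
\item By \cite[Lemma 4.3]{DingW} applied directly to $H\curvearrowright H/H_n$, the product tree $P=T^{(H/H_n)^n}_{(H_m^n)}$ satisfies $\rho(P)=\rho(T^{H/H_n}_{(H_m)})\le\omega\cdot\beta$.
\item $P$ has the single root $(0,(H/H_n)^n)$, so $\rho(P)$ is a successor. Hence the root has rank $\omega\cdot\gamma+j$ with $\gamma<\beta$ and $j<\omega$, and every level-$n$ node has smaller rank.
\item The level-$n$ nodes and everything above them are identical in the two trees. So Proposition~\ref{L_k(T)} bounds the rank of the root of the $(K_m)$-tree by $\omega\cdot\gamma+j+n-1$.
\item That tree therefore has rank $<\omega\cdot(\gamma+1)\le\omega\cdot\beta$. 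Taking the supremum over $n$ gives $\rho((K_m))\le\omega\cdot\beta$.
\end{itemize}
This is the same successor-ordinal trick used in the proof of Theorem~\ref{positive answer for semidirect product}.

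Separately, your detour through $\omega\times n$ does not apply Lemma~\ref{another highest orbit tree} verbatim. That lemma is stated only for the basis of pointwise stabilizers of initial segments, and $(\gnb{m}^n)_m$ is not of that form. The direct use of \cite[Lemma 4.3]{DingW} above avoids the issue. Alternatively, simply cite \cite[Theorem 4.6(1)]{DingW}, as the paper does.
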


\begin{proof}
From~\cite[Theorem 4.6(1)]{DingW}, $H^\omega$ is also \hier\beta.
\end{proof}

Next, we consider the semidirect product of $\Z^\omega$. Recall that
$$\aut(\Z^\omega)=\{f:f,f^{-1}\itx{are both continuous automorphisms on}\Z^\omega\}.$$
We will show that any closed subgroup $G$ of $\aut(\Z^\omega)$ satisfies the condition in Theorem~\ref{positive answer for semidirect product}. Put $L=\aut(\Z^\omega)$. For each $n\in\omega$, define $e_n\in\Z^\omega$ as
\begin{equation*}
    e_n(i)=\begin{cases}
        0,&\itx{if}i\neq n,\\
        1,&\itx{if}i=n.
    \end{cases}
\end{equation*}
Then we can treat any element of $\Z^\omega$ as a column vector and any $f\in\aut(\Z^\omega)$ as an infinite matrix $(f(e_0),f(e_1),\dots)$ over $\Z$. Clearly, each infinite matrix $f=(a_{ij})\in\aut(\Z^\omega)$ is row-finite, i.e.,
$$\forall\,i\in\omega\,\exists\,N\geq 1\,\forall\,j\geq N(a_{ij}=0).$$

Now, endow $\aut(\Z^\omega)$ with the compact-open topology $\mathcal{T}$, and let
$$C(K,V)=\{f\in\aut(\Z^\omega):f(K)\subseteq V\}$$
for compact $K\subseteq\Z^\omega$ and open $V\subseteq\Z^\omega$.
Denote by $\vec{0}$ the identity element of $\Z^\omega$, and let
$$K_n=\{e_i:i\geq n\}\cup\{\vec{0}\}.$$
For each $n\in\omega$, define
$$L_n=\bigcap_{i<n}C(\{e_i\},N_{e_i\restriction n})\cap C(K_n,N_{\vec{0}\restriction n}).$$
Clearly, $L_n$ is open in $\mathcal{T}$, and
\begin{equation*}
    L_n=\left\{\begin{pmatrix}
    I_n & O \\
    \multicolumn{2}{c}{*}
    \end{pmatrix}:\begin{pmatrix}
    I_n & O \\
    \multicolumn{2}{c}{*}
    \end{pmatrix}\in\aut(\Z^\omega)\right\}.
\end{equation*}
Here $I_n$ denotes the identity matrix of order $n$. Using block matrix multiplication, we have $L_n^2=L_n$ and $L_n^{-1}=L_n$ for each $n\in\omega$.

\begin{proposition}\label{property of autZw}
    \begin{enumerate}
        \item $(L_n)$ is an open neighborhood basis of $1_L$;
        \item $(\aut(\Z^\omega),\mathcal{T})$ is a topological group;
        \item the map $(f,x)\mapsto f(x)$ from $\aut(\Z^\omega)\times\Z^\omega$ to $\Z^\omega$ is continuous.
    \end{enumerate}
\end{proposition}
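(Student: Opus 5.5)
We prove the three items in the order (1), (3), (2). Throughout we use that $\Z^\omega$ carries the product topology of discrete $\Z$, so its basic open sets are the cylinders $N_s$ for $s\in\Z^{<\omega}$. We also use that every $f\in\aut(\Z^\omega)$, being a continuous homomorphism, is determined by its matrix: $f(x)=\sum_j x(j)f(e_j)$, where row-finiteness makes each coordinate a finite sum.

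For (1), we must show that every basic compact-open neighbourhood $C(K,V)\ni 1_L$ contains some $L_n$. Since $K\subseteq V$ with $K$ compact and $V$ open, there is an $m$ such that for each $x\in K$ we have $N_{x\restriction m}\subseteq V$. This is a Lebesgue-number argument for the ultrametric on $\Z^\omega$.

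Next take $f\in L_n$ and write $f(x)\restriction n=\sum_j x(j)\,(f(e_j)\restriction n)$. Since $f\in L_n$, we have $f(e_j)\restriction n=e_j\restriction n$ for $j<n$ and $f(e_j)\restriction n=\vec0\restriction n$ for $j\ge n$. Hence $f(x)\restriction n=x\restriction n$ for every $x$.

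This is the only delicate point. The formula is legitimate because $f(x)(i)=\sum_j a_{ij}x(j)$ is a finite sum for each $i$. The top $n\times\omega$ block of the matrix is $(I_n\ O)$. Therefore $f$ fixes $x\restriction n$ for all $x$, and taking $n\ge m$ gives $L_n\subseteq C(K,V)$. Conversely, each $L_n$ is open by construction. Thus the $L_n$, which are subgroups as already noted, form a neighbourhood basis of $1_L$.

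For (3), fix $(f,x)$ and $m$. By continuity of $f$ there is a $k$ such that $y\restriction k=x\restriction k$ implies $f(y)\restriction m=f(x)\restriction m$. Now consider $g=fh$ with $h\in L_{n}$ and $n\ge k$, together with $y\restriction k=x\restriction k$. By the computation in (1), $h(y)\restriction n=y\restriction n$, so $h(y)\restriction k=x\restriction k$. Hence $g(y)\restriction m=f(x)\restriction m$. Since $fL_n$ is a neighbourhood of $f$, this proves joint continuity.

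For (2) it is cleanest to avoid the generic theory of the compact-open topology on noncompact spaces. Instead, we show directly that the topology generated by the translates $fL_n$ is $\mathcal T$, and that $(L_n)$ satisfies the group-topology axioms: each $L_n$ is a subgroup, and for each $f$ and $n$ there is $k$ with $fL_kf^{-1}\subseteq L_n$.

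For the conjugation axiom, use continuity of $f$ and $f^{-1}$. Choose $k$ such that $f^{-1}$ maps $N_{y\restriction k}$ into $N_{f^{-1}(y)\restriction n}$ for the finitely many relevant $y$, namely $y=f(e_i)$ for $i<n$, and uniformly on the compact set $f(K_n)$. Then membership in $L_n$ follows from the description of $L_n$ via $C(\{e_i\},\cdot)$ and $C(K_n,\cdot)$.

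The main obstacle is checking that $\mathcal T$ equals the group topology generated by the $L_n$. We also need right translates, i.e.\ that $L_nf$ is $\mathcal T$-open, which follows from the conjugation property just established. Once these hold, continuity of multiplication and inversion is standard from the neighbourhood-basis-of-subgroups criterion.
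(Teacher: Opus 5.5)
Your proposal has a gap in (2), at exactly the step you call the main obstacle. You announce that you will show $\mathcal{T}$ coincides with the topology generated by the left translates $fL_n$, equivalently that $(fL_n)_n$ is a $\mathcal{T}$-neighbourhood base at every $f$, but you never do. Item (1) only describes $\mathcal{T}$ at $1_L$, and nothing in your write-up carries this over to other points. Without it, the neighbourhood-basis-of-subgroups criterion only gives \emph{some} group topology with neighbourhood base $(L_n)$ at $1_L$. It does not show that $\mathcal{T}$ is that topology. The same unproved fact is used twice more. In (3) you say ``$fL_n$ is a neighbourhood of $f$''. In your claim that right translates $L_nf$ are open, writing $L_nf=\bigcup_{g\in L_nf}gL_{k_g}$ via the conjugation property only helps if the sets $gL_k$ are already known to be $\mathcal{T}$-open.

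The missing ingredient is the pair of translation identities the paper uses: for $f\in\aut(\Z^\omega)$, $fC(K,V)=C(K,f(V))$ and $C(K,V)f=C(f^{-1}(K),V)$. Since $f$ is a homeomorphism of $\Z^\omega$, $f(V)$ is open and $f^{-1}(K)$ is compact, so left and right translations are $\mathcal{T}$-homeomorphisms. Hence $fL_n$ and $L_nf$ are $\mathcal{T}$-open. Moreover, if $f\in C(K,V)$ then $1_L\in C(K,f^{-1}(V))$, so (1) gives some $n$ with $L_n\subseteq C(K,f^{-1}(V))$, i.e.\ $fL_n\subseteq C(K,V)$; finite intersections follow because $(L_n)$ is decreasing.

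With this added, the rest of your proposal is correct and differs from the paper's route. You get the conjugation inclusion from (uniform) continuity of $f^{\pm1}$ on $\{f(e_i):i<n\}\cup f(K_n)$. The paper instead uses a row-finite block computation ($AB=I_n$, $AC=O$) to get $gL_Mg^{-1}\subseteq L_n$. In (3) you use $fL_n$ together with continuity of $f$ at $x$, whereas the paper uses $L_nf$ and a row-finiteness bound $M$. As written, your conjugation argument proves $f^{-1}L_kf\subseteq L_n$ rather than $fL_kf^{-1}\subseteq L_n$; this is harmless because $f$ is arbitrary.
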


\begin{proof}
    (1) We first prove the following claim:

{\sl Claim.} For any compact $K\subseteq\Z^\omega$ and open $V\subseteq\Z^\omega$ with $K\subseteq V$, there exists an $m\in\omega$ such that $L_m\subseteq C(K,V)$.

   \begin{proof}[Proof of Claim]
        Since $K$ is compact, we can find $s_0,\dots,s_k\in\Z^{<\omega}$ such that $K\subseteq \bigcup_{i\leq k}N_{s_i}\subseteq V$. Let $m=\max\{\lh(s_i):i\leq k\}$. Then for any $f\in L_m$ and $x\in K$, we can see that
        \begin{align*}
            f(x)\restriction m=\left(\begin{pmatrix}
                I_m & O\\
                \multicolumn{2}{c}{*}
            \end{pmatrix}\begin{pmatrix}
                x\restriction m\\
                *
            \end{pmatrix}\right)\restriction m=x\restriction m.
        \end{align*}
        From $x\in K$ we have $x\in\bigcup_{i\leq k}N_{s_i}$. So $N_{x\restriction m}\subseteq\bigcup_{i\leq k}N_{s_i}$. This implies $f(x)\in\bigcup_{i\leq k}N_{s_i}\subseteq V$. Therefore, we obtain that $L_m\subseteq C(K,V)$.
    \end{proof}

    Now, for any open neighborhood $C(K_0',V_0)\cap\cdots\cap C(K_n',V_n)$ of $1_L$, by the claim above we can find an $m\in\omega$ such that
    $$L_m\subseteq C(K_0',V_0)\cap\cdots\cap C(K_n',V_n).$$
    Then by definition of $L_m$ we can see that $(L_n)$ is an open neighborhood basis of $1_L$.

    (2) For compact $K\subseteq\Z^\omega$, open $V\subseteq\Z^\omega$ and $f\in\aut(\Z^\omega)$, we can see that $C(K,V)f=C(f^{-1}(K),V)$ and $fC(K,V)=C(K,f(V))$. Note that $f^{-1}(K)$ is compact, and $f(V)$ is open. Then a straightforward verification shows that both $(L_nf)$ and $(fL_n)$ are open neighborhood bases of $f$.

    Now, let $f,g\in\aut(\Z^\omega)$ and $n\in\omega$. Suppose $g=(a_{ij})$. Since the matrix $g$ is row-finite, we can find an $M\in\omega$ such that
    $$\forall i<n\,\forall j\geq M\left(a_{ij}=0\right).$$
    Hence we can divide the matrix $g$ as
    \begin{equation*}
        g=\begin{pmatrix}
            A & O\\
            \multicolumn{2}{c}{*}
        \end{pmatrix},
    \end{equation*}
    where $A$ is a matrix of size $n\times M$. Next, we divide the matrix $g^{-1}$ as
    \begin{equation*}
        g^{-1}=\begin{pmatrix}
            B & C\\
            \multicolumn{2}{c}{*}
        \end{pmatrix},
    \end{equation*}
    where $B$ is a matrix of size $M\times n$. Then we have $AB=I_n$ and $AC=O$. By a straightforward computation, we can see that $gL_Mg^{-1}\subseteq L_n$. Thus $gL_M\subseteq L_ng$. Consequently, we obtain that
    $$(L_ng)(fL_M)^{-1}=L_ngL_Mf^{-1}=L_n(gL_Mg^{-1})gf^{-1}\subseteq L_n^2gf^{-1}=L_ngf^{-1}.$$
    Since $n$ is arbitrary, we can see that the map $(g,f)\mapsto gf^{-1}$ is continuous, which means that $(\aut(\Z^\omega),\mathcal{T})$ is a topological group.

    (3) For $n\in\omega$, let
    $$H_n=\prod_{i<n}\{0\}\times\prod_{i\geq n}\Z.$$
    Let $f\in\aut(\Z^\omega)$ and $x\in\Z^\omega$. Suppose $f=(a_{ij})$. Then for any $n\in\omega$, there exists an $M\in\omega$ such that
    $$\forall i< n\,\forall j\geq M\left(a_{ij}=0\right).$$
    Then a direct computation shows that
    \begin{align*}
        (L_nf)(x+H_M)&=\{g(y):g\in L_nf\land y\in x+H_M\}\\
        &\subseteq f(x)+H_n.
    \end{align*}
    Hence the map $(f,x)\mapsto f(x)$ is continuous.
\end{proof}

\begin{proposition}\label{autZomega is nonarchimedean Polish}
    $(\aut(\Z^\omega),\mathcal{T})$ is a non-archimedean Polish group and $(L_n)\in\dgnb(\aut(\Z^\omega))$.
\end{proposition}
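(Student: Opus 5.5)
We need to show that $(\aut(\Z^\omega),\mathcal{T})$ is a non-archimedean Polish group, and that $(L_n)\in\dgnb(\aut(\Z^\omega))$.

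\textbf{Step 1: the neighborhood basis.} By Proposition~\ref{property of autZw}(2), $(\aut(\Z^\omega),\mathcal{T})$ is a topological group. Each $L_n$ is open. By the block-matrix computation preceding the proposition, $L_n^2=L_n$ and $L_n^{-1}=L_n$, so each $L_n$ is an open subgroup. The sequence is decreasing: if the top $n+1$ rows are $(I_{n+1}\ O)$, then the top $n$ rows are $(I_n\ O)$. For $n=0$ the defining conditions are vacuous, since $N_{\vec 0\restriction 0}=\Z^\omega$, so $L_0=\aut(\Z^\omega)$. By Proposition~\ref{property of autZw}(1), $(L_n)$ is a neighborhood basis of $1_L$. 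Hence, once we know the group is Polish, it is non-archimedean and $(L_n)\in\dgnb(\aut(\Z^\omega))$.

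\textbf{Step 2: second countability.} It suffices to show that each $L_n$ has countable index, or equivalently that there is a countable dense set. I would use the row-finite matrix representation. A coset $fL_n$ is determined by $f\restriction\{e_0,\dots,e_{n-1}\}$, that is, by the first $n$ columns of $f$. Indeed, $f^{-1}g\in L_n$ says exactly that $f^{-1}g$ fixes $e_i$ for $i<n$. The first $n$ columns are $n$ elements of $\Z^\omega$, so this alone gives only continuum many cosets and is not obviously countable. I would instead use right cosets $L_nf$. By the proof of Proposition~\ref{property of autZw}(2), $L_nf=C(f^{-1}(K_n),\cdot)\cap\cdots$, and it is determined by the first $n$ rows of $f$. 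Each row is finitely supported, hence an element of $\Z^{<\omega}$ up to padding. There are only countably many such row blocks, so $L_n$ has countable index. Then $\{L_nf\}_{n,f}$ is a countable base, because $(L_nf)$ is a neighborhood basis at $f$ by the proof of (2).

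\textbf{Step 3: completeness.} I would show that $\aut(\Z^\omega)$ is closed in a Polish space. Map $f\mapsto (f^{\mathrm{rows}}, (f^{-1})^{\mathrm{rows}})$ into $(\Z^{<\omega})^\omega\times(\Z^{<\omega})^\omega$, a countable product of countable discrete spaces. By Step 2, the topology induced by the rows of $f$ is the $\mathcal{T}$-topology, in view of the $L_nf$ basis. The inverse map is continuous in the topological group, so this map is a homeomorphic embedding. Its image is the set of pairs of row-finite matrices $(A,B)$ with $AB=BA=I$. Each entry of $AB$ is a finite sum, since $A$ is row-finite and each column index is used in only finitely many... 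Not quite: the product $AB$ is computed entrywise as $\sum_j a_{ij}b_{jk}$, which is a finite sum because row $i$ of $A$ is finite, and it depends continuously on the finitely many relevant rows of $B$ through their padded entries. So the conditions are closed. Moreover, any such pair gives continuous automorphisms inverse to each other, because a row-finite matrix acts continuously on $\Z^\omega$. The image is therefore closed, which makes $\aut(\Z^\omega)$ Polish.

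The main obstacle is Step 3, specifically matching the topology of the embedding with $\mathcal{T}$ and checking closedness. Alternatively, completeness can be obtained by checking that Cauchy sequences for a two-sided uniformity, given by the row blocks of $f$ and of $f^{-1}$, converge to row-finite inverse matrices.
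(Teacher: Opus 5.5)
Your proof is correct. Step~1, the non-archimedean part, agrees with the paper. You obtain Polishness by a genuinely different route, and it is the more careful one.

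Right after noting that $(L_nf)_n$ is a neighborhood basis at $f$, the paper asserts that $(\aut(\Z^\omega),\mathcal{T})$ is homeomorphic to $(\dsZ)^\omega$. The map implicit there is $f\mapsto(\text{rows of }f)$, which is your Step~2 observation that $L_nf$ is the set of $g$ whose first $n$ rows agree with those of $f$. That map is a homeomorphism onto its image. However, the image is neither all of $(\dsZ)^\omega$ nor closed in it. The coordinate permutations $\sigma_k(x)=(x(1),\dots,x(k),x(0),x(k+1),x(k+2),\dots)$ lie in $\aut(\Z^\omega)$ and converge row by row to the shift $x\mapsto(x(1),x(2),\dots)$, which is not injective. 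So the row description alone does not give completeness. This is unavoidable: the permutation matrices form a closed copy of $S_\infty$, so the group is not CLI, and no one-sided uniformity such as the one given by the $L_nf$ is complete.

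Your device of also recording the rows of $f^{-1}$, as in the standard proof that $S_\infty$ is Polish, is what is needed. The image becomes $\{(A,B):AB=BA=I\}$, and your remark that each entry of $AB$ depends only on finitely many discrete rows shows this set is closed in $(\dsZ)^\omega\times(\dsZ)^\omega$. The paper's homeomorphism with $(\dsZ)^\omega$ does hold a posteriori, by the Alexandrov--Urysohn characterization of Baire space applied to a zero-dimensional, non-locally-compact Polish group. But that only works once Polishness is known, so your embedding supplies what the paper's one-line argument leaves out.

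Two minor clean-ups:
\begin{itemize}
\item The reference ``$L_nf=C(f^{-1}(K_n),\cdot)\cap\cdots$'' is garbled and unnecessary. Directly, $g\in L_nf$ iff the first $n$ rows of $gf^{-1}$ are $(I_n\ O)$. Multiplying on the right by $f$, this holds iff the first $n$ rows of $g$ equal those of $f$.
\item The countable-index argument of Step~2 is subsumed by Step~3, since a closed subspace of a Polish space is automatically separable. Step~2 is still needed to identify $\mathcal{T}$ with the induced topology. The detour through left cosets is harmless, since left and right cosets of $L_n$ are equinumerous via inversion.
\end{itemize}
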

\begin{proof}
    Let
    $$\dsZ=\{x\in\Z^\omega:\exists\,n\in\omega\,\forall\,i\geq n(x(i)=0)\},$$
    and endow it with the discrete topology. Note that for any $f\in\aut(\Z^\omega)$, $(L_nf)$ is an open neighborhood basis of $f$ (see the proof of Proposition~\ref{property of autZw}(2)). Therefore, we can see that $(\aut(\Z^\omega),\mathcal{T})$ is homeomorphic to $(\dsZ)^\omega$. Hence by Proposition~\ref{property of autZw}(2) we obtain that $(\aut(\Z^\omega),\mathcal{T})$ is a Polish group.

    For each $n\in\omega$, $L_n$ is an open subgroup of $\aut(\Z^\omega)$. So by Proposition~\ref{property of autZw}(1) we can see that $(\aut(\Z^\omega),\mathcal{T})$ is a non-archimedean Polish group and $(L_n)\in\dgnb(\aut(\Z^\omega))$.
\end{proof}

\begin{theorem}
    Let $G$ be a closed subgroup of $\aut(\Z^\omega)$, and let $A=G\ltimes_\rho\Z^\omega$, where $\rho(f)(x)=f(x)$ for $f\in G$ and $x\in\Z^\omega$. Then $A$ is a non-archimedean Polish group. Moreover, if $G$ is \hier\alpha\space for some $\alpha<\omega_1$, then $A$ is \hier{(1+\alpha)}.
\end{theorem}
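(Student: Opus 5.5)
The plan is to apply Theorem~\ref{positive answer for semidirect product} with $H=\Z^\omega$, which is abelian and hence \hier1, so that $\beta=1$ and $\beta+\alpha=1+\alpha$. Two things have to be checked:
\begin{itemize}
\item[(i)] $A$ is a non-archimedean Polish group admitting a basis of the form $(G_n\times H_n)\in\dgnb(A)$;
\item[(ii)] condition~(\ref{addition condition}) holds for this basis.
\end{itemize}

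\textbf{Setting up the basis.} Take $G_n=G\cap L_n$, which lies in $\dgnb(G)$ because $G$ is a closed subgroup of $\aut(\Z^\omega)$ and $(L_n)\in\dgnb(\aut(\Z^\omega))$ by Proposition~\ref{autZomega is nonarchimedean Polish}. Take
$$H_n=\prod_{i<n}\{0\}\times\prod_{i\ge n}\Z,$$
as in the proof of Proposition~\ref{property of autZw}(3). Note that $\rho(f)(x)=f(x)$ is jointly continuous by Proposition~\ref{property of autZw}(3). Hence Proposition~\ref{dgnb form of semidirect product} shows that $A$, with the product topology, is a non-archimedean Polish group.

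We want $G_n\times H_n$ itself to be a subgroup, which requires $\rho(G_n)(H_n)=H_n$. For $f\in L_n$, written in block form $\begin{pmatrix}I_n&O\\ *&*\end{pmatrix}$, and $x\in H_n$, the first $n$ coordinates of $f(x)$ are $I_n\cdot 0+O\cdot(x\restriction[n,\infty))=0$. So $f(H_n)\subseteq H_n$, and since $L_n$ is a group (so $f^{-1}\in L_n$) we get equality. The subgroup computation in Proposition~\ref{dgnb form of semidirect product} then gives that $G_n\times H_n$ is an open subgroup of $A$. These subgroups are decreasing, start with $G_0\times H_0=A$, and form a neighborhood basis of the identity, since $(G_n)$ and $(H_n)$ are bases. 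Thus $(G_n\times H_n)\in\dgnb(A)$.

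\textbf{Verifying condition~(\ref{addition condition}).} Fix $f\in G_n$ and $y\in\Z^\omega$. We need $f(y)+H_n=y+H_n$, i.e.\ $f(y)\restriction n=y\restriction n$. This is exactly the computation in the Claim of Proposition~\ref{property of autZw}(1): the first $n$ rows of $f$ are $(I_n\ O)$, so $(f(y))(i)=y(i)$ for $i<n$. Together with $f(H_n)=H_n$ this gives
$$\rho(f)(y+H_n)=f(y)+H_n=y+H_n.$$

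\textbf{Conclusion.} Theorem~\ref{positive answer for semidirect product} now yields that $A$ is \hier{(1+\alpha)}. The only potential obstacle is making sure the product-form basis is legitimate, namely that $G_n\times H_n$ is genuinely a subgroup rather than needing the enlarged $H_n'$ from Proposition~\ref{dgnb form of semidirect product}. The block-matrix invariance $f(H_n)=H_n$ for $f\in L_n$ handles this, and everything else is routine.
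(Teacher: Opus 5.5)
Your proposal is correct and follows the paper's proof. Both apply Theorem~\ref{positive answer for semidirect product} with $H=\Z^\omega$ (abelian, hence \hier1), $G_n=G\cap L_n$ and $H_n=\prod_{i<n}\{0\}\times\prod_{i\ge n}\Z$, and get condition~(\ref{addition condition}) from $f(y)\restriction n=y\restriction n$ for $f\in L_n$; you also justify, via $f(H_n)=H_n$, that $G_n\times H_n$ is already a subgroup, which the paper only asserts with ``Clearly''.
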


\begin{proof}
    By Proposition~\ref{property of autZw}(3), we can see that $(f,x)\mapsto\rho(f)(x)$ is continuous. So $A$ is a non-archimedean Polish group. Since $\Z^\omega$ is abelian, it is \hier{1}.

    Let $\mathcal{G}=(G_n)=(G\cap L_n)\in\dgnb(G)$, and for each $n\in\omega$, let
    $$H_n=\prod_{i<n}\{0\}\times\prod_{i\geq n}\Z.$$
    Then $\mathcal{H}=(H_n)\in\dgnb(\Z^\omega)$. Clearly, $\mathcal{A}=(A_n)=(G_n\times H_n)\in\dgnb(A)$. We now check that the condition~(\ref{addition condition}) are satisfied. But this is clear from $\rho(f)(x+H_n)=f(x)+H_n=x+H_n$ for $f\in G_n$, $x\in\Z^\omega$ and $n\in\omega$. Consequently, if $G$ is \hier\alpha, then $A$ is \hier{(1+\alpha)}.
\end{proof}

\section{Negative Results}

We now turn to some negative results for Problem~\ref{problem of DW}.

Given a countably infinite group $\Gamma$, recall that $\aut(\Gamma)$ consists of all group automorphisms of $\Gamma$. Endow $\aut(\Gamma)$ with the pointwise convergence topology. Since
$$g\in\aut(\Gamma)\iff g\in S_\Gamma\land\forall\,\gamma_1,\gamma_2\in\Gamma\,(g(\gamma_1\gamma_2)=g(\gamma_1)g(\gamma_2)),$$
it follows that $\aut(\Gamma)$ is a closed subgroup of $S_\Gamma$. As $\Gamma$ is countably infinite, $S_\Gamma$ is isomorphic to $S_\infty$.
\begin{theorem}\label{semidirect product on countable group}
    Let $\Gamma$ be a countably infinite group, and let $G$ be a closed subgroup of $\aut(\Gamma)$. If $G$ is proper \hier\alpha, then $A=G\ltimes_\rho\Gamma$ is proper L-\hier\alpha, where $\rho:G\to\aut(\Gamma)$ is given by
    $\rho(g)(\gamma)=g(\gamma)$ for $g\in G$ and $\gamma\in\Gamma$.
\end{theorem}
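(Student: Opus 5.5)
The plan is to handle the two halves of ``proper L-\hier\alpha'' separately: first that $A$ is L-\hier\alpha, then that $A$ is not \hier\alpha. Throughout, enumerate $\Gamma=\{\gamma_0,\gamma_1,\dots\}$, view $G\le\aut(\Gamma)\le S_\Gamma\cong S_\infty$, and let $\gnb{n}=\{g\in G:\forall i<n\,(g(\gamma_i)=\gamma_i)\}$. The action $(g,\gamma)\mapsto g(\gamma)$ is continuous, since $\Gamma$ is discrete and $G$ carries the pointwise convergence topology. So by Proposition~\ref{dgnb form of semidirect product} and the discussion preceding it, $A$ is a non-archimedean Polish group. Its topology is the product of the topology of $G$ with the discrete topology on $\Gamma$.

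\emph{Upper bound.} The set $G\times\{1_\Gamma\}$ is an open subgroup of $A$, because $\{1_\Gamma\}$ is open in $\Gamma$. It is isomorphic, as a topological group, to $G$, which is \hier\alpha. By item (4) of the cited theorem from \cite{DingW} (L-\hier\alpha\ iff some open subgroup is \hier\alpha), $A$ is L-\hier\alpha.

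\emph{Lower bound.} Take the sequence $\mathcal{A}=(A_n)$ with $A_0=A$ and $A_n=\gnb{n}\times\{1_\Gamma\}$ for $n\ge1$; this lies in $\dgnb(A)$. The coset space $A/A_1$ is identified with $\Gamma$ via $(g,\gamma)A_1\mapsto\gamma g(\gamma_0)$. I would instead use $A/(G\times\{1\})$, which is identified with $\Gamma$ via $(g,\gamma)\mapsto\gamma$; its orbit tree is dominated by $\otr{A}{X(\mathcal{A})}$, either by replacing $A_1$ with $G\times\{1\}$ in the basis (allowed, since $\omega(\rho)$ and limitness are basis-independent) or by Proposition~\ref{uniform reduction induces tree embedding}. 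Under this identification $A$ acts on $\Gamma$ by $(g,\gamma)\cdot\delta=\gamma\,g(\delta)$. Hence the $A_0$-orbit is all of $\Gamma$, while for $n\ge1$ the $A_n$-orbit of $\delta$ is $\gnb{n}\cdot\delta$. Consequently the orbit tree $T$ of $\mathcal{A}$ on $\Gamma$ has the following shape:
\begin{itemize}
\item a single root $(0,\Gamma)$;
\item above the root, for $n\ge1$, exactly the nodes $(n,\gnb{n}\cdot\delta)$ of the orbit tree $S=\otr{G}{\Gamma}$, with the same order.
\end{itemize}
Therefore $\rho_T(0,\Gamma)\ge\sup\{\rho_S(1,\gnb{1}\cdot\delta)+1:\delta\in\Gamma\}$.

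\emph{Rank of $S$.} By Lemma~\ref{another highest orbit tree}, $\rho(S)=\rho(\mathcal{G})$. Since $G$ is \hier\alpha\ we have $\rho(\mathcal{G})\le\omega\cdot\alpha$. Since $G$ is not L-\hier\beta\ for any $\beta<\alpha$, we have $\omega(\rho(\mathcal{G}))\ge\omega\cdot\alpha$. Together these force $\rho(S)=\omega\cdot\alpha$, a limit ordinal.

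\emph{Transferring the rank to level $1$.} Because $\rho(S)$ is a limit, the ranks of the level-$0$ nodes of $S$ are cofinal in $\omega\cdot\alpha$. By Proposition~\ref{tree}, each level-$0$ rank equals the supremum of (level-$1$ child ranks $+1$). So if the level-$1$ ranks were bounded by some $\xi<\omega\cdot\alpha$, every level-$0$ rank would be at most $\xi+1$, contradicting cofinality. Hence the level-$1$ ranks are also cofinal in $\omega\cdot\alpha$. This gives $\rho_T(0,\Gamma)\ge\omega\cdot\alpha$, and so $\rho(T)\ge\omega\cdot\alpha+1$.

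\emph{Conclusion.} The tree $T$ is the orbit tree on one piece of a countable discrete $A$-space, and it is dominated by $\otr{A}{X(\mathcal{A})}$, so $\rho(\mathcal{A})>\omega\cdot\alpha$. Thus $A$ is not \hier\alpha, hence proper L-\hier\alpha.

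I expect the main obstacle to be this last bookkeeping: justifying that the particular basis and coset space (with $A_0=A$ and the coset space $A/(G\times\{1\})$) legitimately bound $\rho(\mathcal{A})$ from below for a basis in $\dgnb(A)$. The fallback is to compare directly with $X(\mathcal{A})$ using Proposition~\ref{uniform reduction induces tree embedding}(2).
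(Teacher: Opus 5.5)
Your proof is correct. Its lower-bound half is essentially the paper's computation; what differs is how the two halves are obtained.

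\textbf{Comparison with the paper.} The paper embeds $A$ into $S_\Gamma$ via $\phi(g,\gamma)(\lambda)=\gamma g(\lambda)$ and enumerates $\Gamma$ with $\gamma_0=1_\Gamma$. It takes $A_0=A$ and $A_{n+1}=G_n\times\{1_\Gamma\}$, and observes, as you do, that above the single root $(0,\Gamma)$ the tree $\otr{A}{\Gamma}$ is a copy of $\otr{G}{\Gamma}$. It then applies Lemma~\ref{another highest orbit tree} to $A$ itself to get the exact value $\rho(\mathcal{A})=\omega\cdot\alpha+1$, which yields both ``L-\hier{\alpha}'' and ``not \hier{\alpha}'' at once.

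You get the upper bound softly instead: $G\times\{1_\Gamma\}\cong G$ is an open \hier{\alpha} subgroup, and item (4) of the cited theorem of \cite{DingW} applies. For the lower bound you need only the one-sided inequality $\rho(\otr{A}{\Gamma})\le\rho(\mathcal{A})$, plus your cofinality argument via Proposition~\ref{tree}. That argument is needed because, with your indexing, the part above the root is the levels $\geq 1$ of $\otr{G}{\Gamma}$ rather than a full copy. Your route avoids realizing $A$ as a permutation group, and its upper bound works for any group with an open \hier{\alpha} subgroup. The paper's route is a single exact computation of $\rho(\mathcal{A})$.

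\textbf{Two small points to make explicit.}
\begin{enumerate}
\item Item (4) is stated for CLI groups, so you must note that $A$ is CLI. This follows from \cite{gao98}: $\{1_G\}\times\Gamma$ is a countable discrete closed normal subgroup of $A$ with quotient $G$.
\item Your identification of $A/A_1$ with $\Gamma$ via $(g,\gamma)A_1\mapsto\gamma g(\gamma_0)$ is injective only when $G$ fixes $\gamma_0$. Also, ``replacing $A_1$ by $G\times\{1_\Gamma\}$'' changes the tree you actually computed, so that option alone does not justify the domination. Any of the following fixes works:
\begin{itemize}
\item Choose $\gamma_0=1_\Gamma$. Then $A_1=G\times\{1_\Gamma\}$, and $\Gamma=A/A_1$ is literally a piece of $X(\mathcal{A})$.
\item Use $\theta(\delta)=(1_G,\delta)A_1$. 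Since $G_n\subseteq G_1$ for $n\geq1$, this is an injective reduction of $E^{\Gamma}_{A_n}$ to $E^{X(\mathcal{A})}_{A_n}$ for every $n$, so Proposition~\ref{uniform reduction induces tree embedding}(2) applies.
\item Cite \cite[Corollary 3.12]{DingW}, which the paper uses elsewhere for exactly this inequality.
\end{itemize}
\end{enumerate}
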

\begin{proof}
    Define $\phi:A\to S_\Gamma$ as
    \begin{align*}
        \phi(g,\gamma)(\lambda)=\gamma g(\lambda)
    \end{align*}
    for $(g,\gamma)\in A$ and $\lambda\in\Gamma$.
    It is routine to check that $\phi$ is a topological group embedding.

    Enumerate $\Gamma$ as $\{\gamma_0=1_\Gamma,\gamma_1,\gamma_2,\dots\}$. For each $n\in\omega$, let
    $$G_n=\{g\in G:\forall\,i<n\,(g(\gamma_i)=\gamma_i)\}.$$
    Then $\mathcal{G}=(G_n)\in\dgnb(G)$. Define $A_0=A, A_{n+1}=G_n\times\{1_\Gamma\}$ for $n\in\omega$. Clearly, $\mathcal{A}=(A_n)\in\dgnb(A)$. Now consider the orbit tree $\otr{A}{\Gamma}$. For $p\in\omega$, we have
    \begin{align*}
        \gamma_1E_{A_{p+1}}^\Gamma\gamma_2&\iff\exists\,(g,1_\Gamma)\in A_{p+1}\left((g,1_\Gamma)\cdot\gamma_1=\gamma_2\right)\\
        &\iff\exists\,(g,1_\Gamma)\in A_{p+1}\left(g(\gamma_1)=\gamma_2\right)\\
        &\iff\exists\,g\in G_p\left(g(\gamma_1)=\gamma_2\right).
    \end{align*}
    This shows $\theta:\Gamma\to\Gamma$ defined as $\theta(\gamma)=\gamma$ is a reduction of $E_{A_{p+1}}^\Gamma$ to $E_{G_p}^\Gamma$ for any $p\in\omega$. Let $T_1=\bigcup_{n\geq 1}L_n(\otr{A}{\Gamma})$. Then by Proposition~\ref{uniform reduction induces tree embedding}, we can see that $T_1$ is isomorphic to $\otr{G}{\Gamma}$.
    Thus
    $$\rho_{\otr{A}{\Gamma}}(0,\Gamma)=\rho(T_1)=\rho(\otr{G}{\Gamma})=\rho(\mathcal{G})=\omega\cdot\alpha.$$
    Applying Lemma~\ref{another highest orbit tree}, we obtain
    $$\rho(\mathcal{A})=\rho(\otr{A}{\Gamma})=\rho_{\otr{A}{\Gamma}}(0,\Gamma)+1=\omega\cdot\alpha+1.$$
    This means that $A$ is proper L-\hier\alpha.
\end{proof}

This theorem gives a negative answer to Problem~\ref{problem of DW}.

\begin{example}\label{counterexample of ctb group}
Let
$$\Gamma=\{x\in\Z^\omega:\exists\,n\in\omega\,\forall\,i\geq n\left(x(i)=0\right)\}$$
with discrete topology. Note that this $\Gamma$ is exactly the group $\dsZ$ defined in the proof of Proposition~\ref{autZomega is nonarchimedean Polish}. For any $f\in S_\infty$, we define $\phi_f:\Gamma\to\Gamma$ as
$$\phi_f(x)=x\circ f^{-1}.$$
Then $\phi_f\in\aut(\Gamma)$. Moreover, we can verify that $f\mapsto\phi_f$ is a topological group embedding from $S_\infty$ to $\aut(\Gamma)$. Hence for any $\alpha\geq\omega$, we are able to choose a closed subgroup $G$ of $\aut(\Gamma)$ such that $G$ is proper \hier\alpha. Now according to Theorem~\ref{semidirect product on countable group}, we can see $A=G\ltimes\Gamma$ is proper L-\hier\alpha. Therefore, $A$ is not \hier{(1+\alpha)} since $1+\alpha=\alpha<\alpha+1$ for $\alpha\geq\omega$. This means that $A$ is a desired counterexample.
\end{example}

The following theorem provides a more fundamental counterexample at the lowest level for Problem~\ref{problem of DW}. It also shows that $$\rank(G)\leq\rank(N)+\rank(G/N)$$
does not hold for arbitrary non-archimedean Polish groups $G$ and their closed normal subgroups $N$. Note that in Example~\ref{counterexample of ctb group} we have $\rank(A)=\rank(G)$ and $\rank(\Gamma)=0$.

\begin{theorem}\label{3-CLI}
There exists a proper \hier{3} non-archimedean Polish group $U$ containing a closed normal subgroup $N$ such that both $N$ and $U/N$ are abelian, thus \hier{1}.
\end{theorem}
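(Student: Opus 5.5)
The plan is to build $U$ as a semidirect product $G\ltimes_\rho H$ of two abelian non-archimedean Polish groups, with $N=\{1_G\}\times H$, so that $N\cong H$ and $U/N\cong G$ are abelian. By the last theorem of the preliminaries (part (3)) both are then \hier1. Theorem~\ref{positive answer for semidirect product} would make $U$ \hier2 whenever condition~(\ref{addition condition}) holds for some choice of bases, so the action $\rho$ must violate (\ref{addition condition}) for \emph{every} choice of $\mathcal{G},\mathcal{H}$.

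Theorem~\ref{semidirect product on countable group} shows that a countable discrete $H$ only produces L-\hier{}-type effects: with $G$ abelian, hence \hier1, $A$ would be at most L-\hier1. So $H$ must itself be non-discrete. The construction I would try amplifies Theorem~\ref{semidirect product on countable group}:
\begin{itemize}
\item take countable abelian groups $\Gamma_k$ and closed abelian subgroups $G_k\le\aut(\Gamma_k)$ that are non-discrete, so that each $G_k\ltimes\Gamma_k$ is proper L-\hier1, i.e.\ its orbit tree has rank exactly $\omega+1$;
\item let $H=\prod_k\Gamma_k$ and $G=\prod_k G_k$, acting coordinatewise;
\item interleave the coordinates so that the $k$-th block is only "visible" from level $k$ of a fixed $\mathcal{U}\in\dgnb(U)$. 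Concretely, $U_n$ fixes the first $n$ points in a suitable enumeration of $\bigsqcup_k\Gamma_k$, realized as a closed subgroup of $S_\infty$ via $\phi(g,\gamma)(\lambda)=\gamma g(\lambda)$ blockwise, as in the proof of Theorem~\ref{semidirect product on countable group}.
\end{itemize}
Then $U$ is non-archimedean Polish, $N=H$ is abelian and closed normal, and $U/N\cong G$ is abelian. I may need to refine the action so that each block contributes a full copy of rank $\omega\cdot2$ rather than $\omega+1$, for example by letting $G_k$ act on $\Gamma_k$ through a non-locally-compact abelian group such as a product of copies of $\Z$ acting by shears.

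For the upper bound I would compute $\rho(\mathcal{U})=\rho(\otr{U}{\omega})$ using Lemma~\ref{another highest orbit tree} with the $S_\infty$-embedding. Let $T_1$ be the set of nodes whose projection to the $G$-coordinates is non-trivial. This is downward closed, and its rank is controlled by $\rho(\mathcal{G})\le\omega$. Above any node outside $T_1$ the action reduces, as in the proof of Theorem~\ref{positive answer for semidirect product}, to an $H$-action twisted only by finitely many blocks; there I expect rank $<\omega\cdot2$. Lemma~\ref{rank of concatenated trees} then gives $\rho(\mathcal{U})\le\omega\cdot2+\omega=\omega\cdot3$, so $U$ is \hier3.

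For properness I must show $U$ is not L-\hier2, i.e.\ $\rho(\mathcal{U})>\omega\cdot2+m$ for every $m$; since $\omega(\rho(\mathcal{U}))$ does not depend on the choice of $\mathcal{U}$, one good basis suffices. The plan is to exhibit, for every $m$, a node of length $0$ in $\otr{U}{\omega}$ with a descendant at some level $k\ge m$ whose rank is at least $\omega\cdot2$, and to let $k\to\infty$. Inside block $k$ I would find orbits at level $k$ on which the remaining group acts like the semidirect product of Theorem~\ref{semidirect product on countable group}, "shifted up" by a \hier1 factor, giving rank $\omega\cdot2$ there; taking the supremum over $k$ then yields a root of rank $\ge\omega\cdot3$. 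I would verify these local ranks through Theorem~\ref{rank and balanced, finite case}, by showing $\rk(U_a,U_{\langle n\rangle})>2$ for suitable points $a$.

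This lower bound is the main obstacle. It requires choosing the action so that the coset non-invariance (the failure of (\ref{addition condition})) compounds across two levels, and not just one as in Theorem~\ref{semidirect product on countable group}. If the coordinatewise product only reaches L-\hier2, my first fallback is to let $G$ act on $H$ by infinite "upper-triangular" shears mixing all blocks, in the spirit of the matrix groups inside $\aut(\Z^\omega)$. The goal there is that stabilizing finitely many points of $H$ never bounds the displacement of later coordinates uniformly.
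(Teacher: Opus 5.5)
Your main construction cannot work, however the lower bound is organised. With a coordinatewise action, $(\prod_kG_k)\ltimes(\prod_k\Gamma_k)$ is topologically isomorphic to $\prod_k(G_k\ltimes\Gamma_k)$. Since $\Gamma_k$ is discrete, $G_k\times\{1\}$ is an open subgroup of the $k$-th factor isomorphic to the abelian, hence \hier1, group $G_k$. So every factor is L-\hier1, hence \hier2, i.e.\ $3$-balanced by Theorem~\ref{relation of balanced and CLI, non-archimedean case}. Balancedness passes to countable products. A basic neighbourhood of the identity is $V=\bigcap_{k\le m}\pi_k^{-1}(V_k)$, and a witness $W$ in the $k$-th factor pulls back to the witness $\pi_k^{-1}(W)$, because $h\,\pi_k^{-1}(W)\,h^{-1}=\pi_k^{-1}(h_kWh_k^{-1})$. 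Proposition~\ref{basic property of rk}(3) then gives $\rk(V,U)\le\max_{k\le m}\rk(V_k,G_k\ltimes\Gamma_k)$. So your $U$ is \hier2, i.e.\ $\rho(\mathcal U)\le\omega\cdot2$, and the nodes of rank $\ge\omega\cdot2$ at arbitrarily high levels that your lower bound needs do not exist.

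Interleaving the blocks only changes the choice of $\mathcal U\in\dgnb(U)$, which cannot change the level. The proposed refinement does not help either. With $\Gamma_k$ countable discrete and $G_k$ abelian, every block is L-\hier1 whatever abelian group acts, and even \hier2 blocks would only give a \hier2 product. Your upper-bound sketch has an independent problem. In the proof of Theorem~\ref{positive answer for semidirect product}, the reduction to an $H$-action above nodes outside $T_1$ uses (\ref{addition condition}) exactly at the step $\rho(a)(y_1)H_n'=y_1H_n'$, which you have deliberately excluded. The general bound of Section~6 only gives \hier{(\omega+1)} here.

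What is missing is a concrete action in which the failure of (\ref{addition condition}) is nested twice, together with an exact computation of the orbit tree. Your fallback of ``upper-triangular shears'' points the right way but supplies neither. The paper takes $U$ to be the $4\times4$ unitriangular matrices with $a,b,d\in L$ (row-finite integer matrices, i.e.\ continuous endomorphisms of $\Z^\omega$) and $c,e,f\in\Z^\omega$. It takes $U_n$ given by $a,b,d\in L_n$ and $c,e,f\in G_n$, and $N=\{a=b=c=0\}$. For $n\ge1$ one shows that $U_p$ fixes $XU_n$ iff $p\ge N_3(X)=\max\{n,R^n(u),R^n(uv-x),R^n(v)\}$, so nodes at level $N_2(X)=\max\{n,R^n(u)\}$ have finite rank. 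If $R^n(u)>n$, then at level $p=N_2(X)-1$ the entry $b\in L_p$ makes $R^n(ub)$, hence $N_3$, unbounded, giving rank $\omega$. At level $n-1$ the entry $a\in L_{n-1}$ makes $R^n(u+a)$, hence $N_2$, unbounded, giving rank $\omega\cdot2$. Thus $\rho(\otr{U}{U/U_n})=\omega\cdot2+n$ and $\rho(\mathcal U)=\omega\cdot3$, which settles both bounds at once.

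Incidentally, this $U$ does fit your framework: the abelian subgroup $\{b=d=e=f=0\}\cong L\times\Z^\omega$ complements the abelian normal subgroup $\{a=c=0\}$. But the essential mechanism is the chain of twists $a\to ub\to(uv-x)c$ across different entries, which a blockwise product can never produce.
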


Due to the complexity and tediousness of the proof for the above theorem, we defer its proof to Section 7.

\section{Upper bound on group extensions}

In this section, our goal is to establish a general formula that bounds the complexity of $G$, in terms of the levels of $N$ and $G/N$, where $N$ is a closed normal subgroup of a non-archimedean Polish group $G$.

The following is a well-known result in group theory.

\begin{proposition}\label{basic theorem of group homomorphism}
    Suppose $G$ is a Polish group, $N$ is a closed normal subgroup of $G$, and $H$ is a clopen subgroup of $G$. Then $HN/N$ is isomorphic to $H/H\cap N$.
\end{proposition}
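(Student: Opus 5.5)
The plan is the second isomorphism theorem, made topological. First I prove the algebraic isomorphism. Let $\pi:G\to G/N$ be the quotient map and consider the restriction $\psi=\pi\restriction H:H\to G/N$, $\psi(h)=hN$. It is a group homomorphism with image $HN/N$, which is a subgroup of $G/N$ because $N$ is normal, so $HN$ is a subgroup. Its kernel is $H\cap N$. Hence $\psi$ induces a bijective homomorphism
$$\bar\psi:H/H\cap N\to HN/N,\qquad h(H\cap N)\mapsto hN.$$

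Next I check continuity and openness, so that $\bar\psi$ is an isomorphism of topological groups.

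\emph{Continuity.} $\psi$ is continuous as a restriction of the continuous map $\pi$. Since $H/H\cap N$ carries the quotient topology, $\bar\psi$ is continuous.

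\emph{Openness.} Let $U\subseteq H$ be open in $H$. Since $H$ is open in $G$, $U$ is open in $G$. The quotient map $\pi$ is open, so $\pi(U)=UN/N$ is open in $G/N$, and therefore open in the subspace $HN/N$. Every open subset of $H/H\cap N$ has the form $U(H\cap N)/(H\cap N)$ for some open $U\subseteq H$, and its image under $\bar\psi$ is $\pi(U)$. So $\bar\psi$ is open.

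As a by-product, $HN=\pi^{-1}(\pi(H))$ is open, hence clopen, in $G$. So $HN/N$ is a clopen subgroup of $G/N$, and both sides are Polish groups.

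The only step needing care is openness, and it is immediate here because $H$ is open. That hypothesis is exactly what lets us avoid an Effros-type open mapping argument. One could alternatively note that a continuous bijective homomorphism between Polish groups is a homeomorphism, which would handle openness in any case.
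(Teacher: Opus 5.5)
Your proposal is correct and follows the paper's own argument. Both restrict the open quotient map $\pi$ to $H$, note that the kernel is $H\cap N$, and use openness of $H$ in $G$ to see that $\pi\restriction H$ is a continuous open surjection onto $\pi(H)=HN/N$; you simply write out the continuity and openness of the induced map $\bar\psi$ in more detail than the paper does.
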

\begin{proof}
    Assume $\pi:G\to G/N$ is the quotient map. Then $\pi$ is surjectively open. Thus $\pi(H)$ is an open subgroup of $G/N$.

    Consider $\pi\restriction H:H\to \pi(H)$. Note that it is a surjectively and continuously open homomorphism. Also, we know that
    $$g\in\ker(\pi\restriction H)\Leftrightarrow g\in H\land\pi(g)=N\Leftrightarrow g\in H\cap N.$$
    Hence $H/H\cap N$ is isomorphic to $\pi(H)=HN/N$.
\end{proof}

For a group $G$ and its normal subgroup $N$, we denote by $[h]_N$ the left/right coset $hN=Nh$.

\begin{proposition}\label{induced action of quotient group}
    Let $H$ be a non-archimedean Polish group, $K$ a closed normal subgroup of $H$, and $Y$ a countable discrete Polish $H$-space. Let $\pi$ be the quotient map from $H$ to $H/K$. We define an action of $H/K$ on $Y/K$ by
    $$[h]_K\cdot[y]_K=[h\cdot y]_K,$$
    where $h\in H$, $y\in Y$. Then
    \begin{enumerate}
        \item this group action is well-defined and continuous;
        \item if $H^*$ is a subgroup of $H$, then for any $y \in Y$,
        \begin{align*}
            \bigcup\pi(H^*)\cdot[y]_K=H^*K\cdot y&&\itx{and}&&(H^*K\cdot y)/K=\pi(H^*)\cdot[y]_K.
        \end{align*}
    \end{enumerate}
\end{proposition}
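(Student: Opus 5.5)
To prove Proposition~\ref{induced action of quotient group}, we first check that the formula $[h]_K\cdot[y]_K=[h\cdot y]_K$ does not depend on the chosen representatives. Here $Y/K$ denotes the set of $K$-orbits of $Y$ (the $K$-action is the restriction of the $H$-action), and $[y]_K=K\cdot y$.

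Let $h'=hk_1$ and $y'=k_2\cdot y$ with $k_1,k_2\in K$. Then
\begin{equation*}
h'\cdot y'=hk_1k_2\cdot y=(hk_1k_2h^{-1})h\cdot y .
\end{equation*}
Since $K$ is normal in $H$, we have $hk_1k_2h^{-1}\in K$. Therefore $[h'\cdot y']_K=[h\cdot y]_K$, and the action is well defined.

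The axioms of a group action follow at once from those of $H\curvearrowright Y$:
\begin{equation*}
[1_H]_K\cdot[y]_K=[y]_K,\qquad [g]_K\cdot([h]_K\cdot[y]_K)=[gh\cdot y]_K=[gh]_K\cdot[y]_K .
\end{equation*}

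For continuity, note that $Y/K$ is countable. We give it the discrete topology, so it is a countable discrete Polish space. It then suffices to show that for each fixed $[y]_K$ and each target $[z]_K$, the set $\{[h]_K:[h\cdot y]_K=[z]_K\}$ is open in $H/K$.

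Its preimage under $\pi$ is
\begin{equation*}
\{h\in H: h\cdot y\in K\cdot z\}=\bigcup_{z'\in K\cdot z}\{h: h\cdot y=z'\}.
\end{equation*}
Each set $\{h:h\cdot y=z'\}$ is either empty or a left coset of the stabilizer $H_y$. Because $Y$ is discrete and the action is continuous, $H_y$ is open, so the preimage is open in $H$.

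Since $\pi$ is an open surjection, a set whose preimage is open is itself open: if $\pi^{-1}(S)$ is open, then $S=\pi(\pi^{-1}(S))$ is open. Hence the map $[h]_K\mapsto[h]_K\cdot[y]_K$ is continuous for each $y$. In a discrete space this separate continuity upgrades to joint continuity: near a point $([h]_K,[y]_K)$, the second coordinate is locally constant, so continuity in the first variable is enough.

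For (2), let $H^*\le H$ and $y\in Y$. An element $u\in Y$ lies in $\bigcup\pi(H^*)\cdot[y]_K$ if and only if $u\in[h\cdot y]_K$ for some $h\in H^*$. This holds if and only if $u=k h\cdot y$ for some $k\in K$ and $h\in H^*$, that is, $u\in KH^*\cdot y$. Normality of $K$ gives $KH^*=H^*K$, which proves the first equality.

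The second equality follows immediately from the first. The set $H^*K\cdot y$ is $K$-invariant, since $KH^*K=H^*K$. Its $K$-orbits are exactly the classes $[h\cdot y]_K$ with $h\in H^*$, which are precisely the elements of $\pi(H^*)\cdot[y]_K$.

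The only step needing some care is the continuity argument, namely passing openness through the quotient map $\pi$. Everything else is direct bookkeeping with normality.
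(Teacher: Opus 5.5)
Your proof is correct. The paper dismisses this proposition as ``just a routine verification'', and your argument supplies exactly that verification: well-definedness via normality of $K$, continuity via open stabilizers in the discrete space $Y$, openness of $\pi$, and discreteness of $Y/K$, and part (2) via $KH^*=H^*K$.
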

\begin{proof}
    The proof is just a routine verification.
\end{proof}



\begin{definition}\label{definition of sxgn}
Let $G$ be a non-archimedean Polish group, and $X$ a countable discrete Polish $G$-space. For $x\in X$, $\mathcal{G}=(G_n)\in\dgnb(G)$, and a closed normal subgroup $N$ of $G$, define
$$s(x,\mathcal{G},N)=\min\{p\in\omega:G_pN\cdot x=N\cdot x\}.$$
\end{definition}
Below we verify $s$ is well-defined and give some properties of $s$.

\begin{lemma}\label{property of sxgn}
Let $G,X,x,\mathcal{G},N$ be as above, and let $\pi:G\to G/N$ be the quotient map. Then
\begin{enumerate}
    \item $s(x,\mathcal{G},N)$ is well-defined;
    \item $s(x,\mathcal{G},N)$ is the least $p\in\omega$ such that $\pi(G_p)\cdot [x]_N=\{[x]_N\}$;
    \item for any $y\in [x]_N$, we have $s(y,\mathcal{G},N)=s(x,\mathcal{G},N)$.
\end{enumerate}
\end{lemma}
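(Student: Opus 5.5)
The plan is to prove (2) first and then read (1) and (3) off it. Here $G_pN\cdot x$ is the orbit of $x$ under the subgroup $G_pN$. This is a subgroup because $N$ is normal, and $N\subseteq G_pN$ gives $N\cdot x\subseteq G_pN\cdot x$ for every $p$.

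For (2), apply Proposition~\ref{induced action of quotient group}(2) with $H=G$, $K=N$, $Y=X$ and $H^*=G_p$. It gives $\bigcup\pi(G_p)\cdot[x]_N=G_pN\cdot x$. Hence $\pi(G_p)\cdot[x]_N=\{[x]_N\}$ holds iff the union equals $[x]_N=N\cdot x$, i.e.\ iff $G_pN\cdot x=N\cdot x$. So the two sets of $p$ are equal, and the two minima coincide.

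For (1), we need some $p$ with $G_pN\cdot x=N\cdot x$; the minimum then exists because $\omega$ is well-ordered. By (2) it suffices to find $p$ with $\pi(G_p)\subseteq\mathrm{Stab}_{G/N}([x]_N)$. The induced action of $G/N$ on the countable discrete space $X/N$ is continuous by Proposition~\ref{induced action of quotient group}(1). Therefore the stabilizer of $[x]_N$ is an open subgroup of $G/N$, and its preimage under the continuous map $\pi$ is an open neighbourhood of $1_G$. Since $(G_p)$ is a neighbourhood basis of $1_G$, some $G_p$ lies in this preimage, which is what we need. A more elementary route avoids quotients: the stabilizer $G_x$ is open, so some $G_p\subseteq G_x$, and then $G_pN\cdot x=NG_p\cdot x=N\cdot x$, using $G_pN=NG_p$ by normality. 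I would present this second route, as it is cleaner.

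For (3), let $y=n\cdot x$ with $n\in N$, so $N\cdot y=N\cdot x$. For any $p$, normality gives $G_pN\cdot y=G_pNn\cdot x=G_pN\cdot x$, since $Nn=N$. The defining conditions for $x$ and $y$ therefore agree for every $p$, and so do the minima. Alternatively this follows from (2), since $[y]_N=[x]_N$.

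I expect no real obstacle here. The only care needed is the normality bookkeeping: $G_pN$ is a subgroup and $NG_p=G_pN$. One should also check that the continuity of the quotient action is not essential if we use the openness of $G_x$ instead.
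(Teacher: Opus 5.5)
Your proof is correct. For (2) and (3) it is essentially the paper's argument. In both, (2) is read off from Proposition~\ref{induced action of quotient group}(2). For (3), the paper shows $G_{s(y,\mathcal{G},N)}N\cdot x=N\cdot y=N\cdot x$ and argues by symmetry, while you observe directly that the defining conditions for $x$ and $y$ coincide for every $p$; this step uses only $Nn=N$, not normality.

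The genuine difference is in (1).

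\begin{itemize}
\item The paper works in the quotient. It invokes the continuity of the induced action of $G/N$ on $X/N$ (Proposition~\ref{induced action of quotient group}(1)) to see that the stabilizer of $[x]_N$ is open in $G/N$. It then chooses $p$ with $\pi(G_p)$ inside that stabilizer, which uses that $(\pi(G_n))$ is a neighbourhood basis, i.e.\ that $\pi$ is open. Finally it transfers back to $G$ via Proposition~\ref{induced action of quotient group}(2).
\item Your preferred route never leaves $G$. The stabilizer $G_x$ is open because $X$ is discrete, so some $G_p\subseteq G_x$, and then $G_pN\cdot x=NG_p\cdot x=N\cdot x$.
\end{itemize}

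Your route avoids the quotient topology and the continuity of the induced action altogether. It also gives the explicit bound $s(x,\mathcal{G},N)\le\min\{p\in\omega:G_p\subseteq G_x\}$. The paper's version stays within the quotient-action picture used in (2), at the cost of needing that extra continuity fact.

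Your first route is also fine, and slightly leaner than the paper's. Pulling the open stabilizer back along $\pi$ needs only continuity of $\pi$, not openness.
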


\begin{proof}
    (1) We only need to prove that there exists a $p\in\omega$ such that $G_pN\cdot x=N\cdot x$. Let $\mathcal{G}'=(\pi(G_n))\in\dgnb(G/N)$. By Proposition~\ref{induced action of quotient group}, we can induce a continuous action of $G/N$ on $X/N$. By continuity, the set
    $$A=\{gN:g\in G\land gN\cdot[x]_N=[x]_N\}$$
    is an open subset of $G/N$ containing $[1_G]_N$. Thus we can find a $p\in\omega$ such that $\pi(G_p)\subseteq A$. Clearly, we have $\pi(G_p)\cdot[x]_N=\{[x]_N\}$. Applying Proposition~\ref{induced action of quotient group}(2), we obtain $G_pN\cdot x=N\cdot x$.

    (2) This follows immediately from the definition of $s(x,\mathcal{G},N)$ and Proposition~\ref{induced action of quotient group}(2).

    (3) Take any $y\in[x]_N$. Note that
    $$G_{s(y,\mathcal{G},N)}N\cdot x=G_{s(y,\mathcal{G},N)}N\cdot y=N \cdot y=N\cdot x.$$
    Hence by (2) we get $s(x,\mathcal{G},N)\leq s(y,\mathcal{G},N)$. Similarly, we have $s(y,\mathcal{G},N)\leq s(x,\mathcal{G},N)$. This gives $s(y,\mathcal{G},N)=s(x,\mathcal{G},N)$.
\end{proof}

From now on, we fix a non-archimedean Polish group $G$ and a countable discrete Polish $G$-space $X$, and we adopt the following notations and conventions:
\begin{enumerate}
    \item[(a)] $N$ is a closed normal subgroup of $G$;
    \item[(b)] the closed subgroup $N$ and the quotient group $G/N$ are both CLI groups;
    \item[(c)] let $\mathcal{G}=(G_n)\in\dgnb(G)$, and for $k\in\omega$, let $\mathcal{G}_k=(G_{k+n})_{n\in\omega}\in\dgnb(G_k)$;
    \item[(d)] let $\mathcal{N}=(N_n)=(G_n\cap N)\in\dgnb(N)$, and let $\mathcal{N}_k=(N_{k+n})_{n\in\omega}\in\dgnb(N_k)$ for each $k\in\omega$;
    \item[(e)] for any $n\in\omega$, we can see that $N_n$ is a closed normal subgroup of $G_n$. So we denote by $\pi_n$ the quotient map from $G_n$ to $G_n/N_n$. For each $k\in\omega$, define $\pi(\mathcal{G}_k)=(\pi_k(G_{k+n}))_{n\in\omega}\in\dgnb(G_k/N_k)$.
\end{enumerate}

Now for $x\in X$ and $n\in\omega$, we define
\begin{equation*}
    q_{x,n}=\begin{cases}
        0,&\itxl{if}n=0,\\
        q_{x,n-1}+\max\{s(x,\mathcal{G}_{q_{x,n-1}},N_{q_{x,n-1}}),1\},&\itxl{if}n>0.
    \end{cases}
\end{equation*}
Next, we prove some properties of $q_{x,n}$.




\begin{lemma}\label{property of qxn}
    For $x\in X$ and $n\in\omega$, we have
    \begin{enumerate}
        \item $q_{x,n+1}>q_{x,n}$;
        \item $G_{q_{x,n+1}}N_{q_{x,n}}\cdot x=N_{q_{x,n}}\cdot x$;
        \item if $G_{q_{x,n}}N_{q_{x,n}}\cdot x\neq N_{q_{x,n}}\cdot x$, then
        \begin{equation*}
            q_{x,n+1}=\itxl{the least}p\in\omega\itx{such that}\pi(G_p)\cdot[x]_{N_{q_{x,n}}}=\{[x]_{N_{q_{x,n}}}\};
        \end{equation*}
        \item for any $y\in N_{q_{x,n}}\cdot x$ and $i\leq n+1$, $q_{y,i}=q_{x,i}$ holds.
    \end{enumerate}
\end{lemma}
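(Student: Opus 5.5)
The four items come straight from the recursive definition of $q_{x,n}$ together with Lemma~\ref{property of sxgn}, applied not to $G$ itself but to the group $G_k$ with its closed normal subgroup $N_k=G_k\cap N$ and the basis $\mathcal{G}_k=(G_{k+m})_{m\in\omega}$. Throughout, write $k=q_{x,n}$ and $s=s(x,\mathcal{G}_k,N_k)$. Unwinding Definition~\ref{definition of sxgn} for this triple gives
$$s=\min\{p\in\omega: G_{k+p}N_k\cdot x=N_k\cdot x\},$$
and $q_{x,n+1}=k+\max\{s,1\}$. Lemma~\ref{property of sxgn}(1) guarantees that $s$ exists, so $q_{x,n}$ is well defined.

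\textbf{Items (1) and (2).} Item (1) is immediate, since $\max\{s,1\}\ge 1$. For (2), note that $q_{x,n+1}\ge k+s$ and the sequence $(G_m)$ is decreasing, so $G_{q_{x,n+1}}\subseteq G_{k+s}$. This gives
$$G_{q_{x,n+1}}N_k\cdot x\subseteq G_{k+s}N_k\cdot x=N_k\cdot x.$$
The reverse inclusion holds because $1_G\in G_{q_{x,n+1}}$.

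\textbf{Item (3).} The hypothesis $G_kN_k\cdot x\neq N_k\cdot x$ says exactly that $p=0$ fails in the definition of $s$. Hence $s\ge1$ and $q_{x,n+1}=k+s$. By Lemma~\ref{property of sxgn}(2) applied to $G_k,N_k$, the number $s$ is the least $p$ with $\pi_k(G_{k+p})\cdot[x]_{N_k}=\{[x]_{N_k}\}$. Here I read the $\pi$ in the statement as $\pi_k$, so the index $p$ ranges over values $\ge k$, which is what makes $\pi_k(G_p)$ meaningful. Reindexing $p'=k+p$, the least admissible $p'$ is $k+s=q_{x,n+1}$. The condition is monotone in $p'$ because $(G_m)$ is decreasing, so "least" is unambiguous.

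\textbf{Item (4).} I argue by induction on $i\le n+1$. The base case $q_{y,0}=q_{x,0}=0$ is trivial. Suppose $q_{y,j}=q_{x,j}=:k_j$ for some $j\le n$. By (1), $k_j\le q_{x,n}$, so $N_{q_{x,n}}\subseteq N_{k_j}$, and therefore
$$y\in N_{q_{x,n}}\cdot x\subseteq N_{k_j}\cdot x=[x]_{N_{k_j}}.$$
Lemma~\ref{property of sxgn}(3), applied in $G_{k_j}$ with normal subgroup $N_{k_j}$, then yields $s(y,\mathcal{G}_{k_j},N_{k_j})=s(x,\mathcal{G}_{k_j},N_{k_j})$. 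The recursion therefore gives $q_{y,j+1}=q_{x,j+1}$, and the induction runs up to $j+1=n+1$.

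The only delicate point is the bookkeeping in (3) and (4): one must apply the earlier lemma to the correct group and subgroup ($G_k$ and $N_k$ rather than $G$ and $N$) and reindex the basis correctly. I do not expect any real obstacle beyond this.
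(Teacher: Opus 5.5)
Your proposal is correct and follows the same route as the paper. Items (1)--(3) are read off from the recursion $q_{x,n+1}=q_{x,n}+\max\{s,1\}$ together with Lemma~\ref{property of sxgn} applied to $G_{q_{x,n}}$, $N_{q_{x,n}}$ and $\mathcal{G}_{q_{x,n}}$, and (4) is the same induction on $i$ using Lemma~\ref{property of sxgn}(3) and the inclusion $N_{q_{x,n}}\cdot x\subseteq N_{q_{x,i-1}}\cdot x$. Your explicit appeal to the monotonicity of $(G_m)$ in (2), and your reading of $\pi$ as $\pi_{q_{x,n}}$ in (3), fill in details the paper leaves implicit.
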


\begin{proof}
    (1) can be derived from the definition of $q_{x,n}$ directly. For (2), just note that $G_{q_{x,n}+s(x,\mathcal{G}_{q_{x,n}},N_{q_{x,n}})}N_{q_{x,n}}\cdot x=N_{q_{x,n}}\cdot x$ by Lemma~\ref{property of sxgn}. To prove (3), note that $G_{q_{x,n}}N_{q_{x,n}}\cdot x\neq N_{q_{x,n}}\cdot x$ could imply
    $$s(x,\mathcal{G}_{q_{x,n}},N_{q_{x,n}})>0$$
    by definition of $s(x,\mathcal{G}_{q_{x,n}},N_{q_{x,n}})$. So $q_{x,n+1}=q_{x,n}+s(x,\mathcal{G}_{q_{x,n}},N_{q_{x,n}})$. This leads to (3) by Lemma~\ref{property of sxgn}(2).

    Finally, we prove (4) by induction on $i\leq n+1$. For $i=0$, clearly $q_{y,0}=0=q_{x,0}$. Suppose for $i-1$ we have $q_{y,i-1}=q_{x,i-1}$. Since $y\in N_{q_{x,n}}\cdot x\subseteq N_{q_{x,i-1}}\cdot x$, by Lemma~\ref{property of sxgn}(3), we have
    $$s(y,\mathcal{G}_{q_{y,i-1}},N_{q_{y,i-1}})=s(x,\mathcal{G}_{q_{x,i-1}},N_{q_{x,i-1}}).$$
    Then from the definitions of $q_{y,i}$ and $q_{x,i}$, we obtain $q_{y,i}=q_{x,i}$. This finishes the induction.
\end{proof}

To proceed, we introduce the following auxiliary definition.
\begin{definition}
    Let $G$ be a non-archimedean Polish group, $\mathcal{G}=(G_n)\in\dgnb(G)$, and $X$ a countable discrete Polish $G$-space. We define $(T_\mathcal{G}^X)^+=T_\mathcal{G}^X\cup\{(0,G\cdot x):x\in X\}$.
\end{definition}
In this definition, we add all singleton $G$-orbits to the orbit tree. This will be used in the proof of Lemma~\ref{psix is order preserving}. Note that $\rho(\otr{G}{X})=\rho((\otr{G}{X})^+)$ if $\otr{G}{X}$ is nonempty.

For a binary relation $R$ on a set $A$, recall that $R$ is {\it well-founded} if each nonempty subset of $A$ has a $R$-minimal element. Given a well-founded relation $R$ on $A$, for each $x\in A$, we can define the {\it rank} of $x$ in $A$ as
$$\rho_R(x)=\sup\{\rho_R(y)+1:y\in A\land y\,R\,x\}.$$
Note that $x\in A$ is a $R$-minimal element iff $\rho_R(x)=0$. Next we define the {\it rank} of $R$ by
$$\rho(R)=\sup\{\rho_R(x)+1:x\in A\}.$$
These notions coincide with previous definitions of rank for trees, if we treat a well-founded tree $(T,<)$ as a well-founded relation $(T,>)$.

Let $R,S$ be two binary relations on $A,B$, respectively. A function $f:A\to B$ is said to be a {\it homomorphism} from $(A,R)$ to $(B,S)$ if
$$\forall\,x_1,x_2\in A\left(x_1\,R\,x_2\implies f(x_1)\,S\,f(x_2)\right).$$
If such a homomorphism $f$ exists and $S$ is well-founded, then $R$ is also well-founded. Moreover, by induction one can show that $\rho(R)\leq\rho(S)$.

Before diving into the details, we briefly outline our approach. We begin by defining a tree $(T_B,<_{T_B})$. Next, we replace each node in $T_B$ with a new tree, and lexicographically order the resulting set. Denoting the resulting set and order by $A$ and $R$, respectively, we show that $(A,R)$ is well-founded. We then construct a homomorphism from $(\otr{G}{X},>)$ to $(A,R)$. It follows that the rank of $R$ is an upper bound for the rank of $\otr{G}{X}$.

Now, we define a tree
$$T_B=\{(q_{x,n},N_{q_{x,n}}\cdot x)\in \otr{N}{X}:x\in X\land n\in\omega\}\cup\{\emptyset\},$$
which is ordered by
$$C<_{T_B}D\iff(C=\emptyset\land D\neq\emptyset)\lor(C<D).$$
Here $C,D\in T_B$, and $<$ is the order of the tree $\otr{N}{X}$. Clearly, $\emptyset$ is the $<_{T_B}$-least element in $T_B$, and thus from~\cite[Corollary 3.12]{DingW},
$$\rho(T_B)=\rho_{T_B}(\emptyset)+1\le\rho(\otr{N}{X})+1\le\rho(\mathcal N)+1.$$

Next we define a function $\Phi$ on $T_B$ as
\begin{align*}
    \Phi(\emptyset)&=(T_{\pi(\mathcal{G})}^{X/N})^+,\\
    \Phi(q_{x,n},N_{q_{x,n}}\cdot x)&=(T_{\pi(\mathcal{G}_{q_{x,n+1}})}^{N_{q_{x,n}}\cdot x/N_{q_{x,n+1}}})^+
\end{align*}
for $x\in X$ and $n\in\omega$. For any $x,y\in X$ and $n\in\omega$, when $(q_{x,n},N_{q_{x,n}}\cdot x)=(q_{y,n},N_{q_{y,n}}\cdot y)$,  we can see that $q_{x,n+1}=q_{y,n+1}$ by Lemma~\ref{property of qxn}(4). Therefore,
$$(T_{\pi(\mathcal{G}_{q_{x,n+1}})}^{N_{q_{x,n}}\cdot x/N_{q_{x,n+1}}})^+=(T_{\pi(\mathcal{G}_{q_{y,n+1}})}^{N_{q_{y,n}}\cdot y/N_{q_{y,n+1}}})^+.$$
This shows that $\Phi$ is well-defined.

Let
$$A=\bigcup_{s\in T_B}\{s\}\times\Phi(s).$$
For $s\in T_B$, we denote by $<_{\Phi(s)}$ the order of the tree $\Phi(s)$. We define a relation $R$ on $A$ by
$$(t,D)\,R\,(s,C)\iff s<_{T_B}t\lor(s=t\land C<_{\Phi(t)}D),$$
where $(s,C),(t,D)\in A$.
\begin{lemma}\label{upper bound on R}
    $R$ is a well-founded relation on $A$. Furthermore, we have
    $$\rho(R)\leq\sup\{\rho(\Phi(s)):s\in T_B\}\cdot\rho(T_B).$$
\end{lemma}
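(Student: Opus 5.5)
Set $\lambda=\sup\{\rho(\Phi(s)):s\in T_B\}$. Each $\Phi(s)$ is an orbit tree (plus singleton roots) of a CLI group acting on a countable discrete space, so it is well-founded. The goal is then a bound by the ordinal product $\lambda\cdot\rho(T_B)$, which is the standard bound for a lexicographic sum of well-founded relations indexed by a well-founded relation. Note the orientation: $R$ is the lexicographic order in which moving up in $T_B$ (to $t$ with $s<_{T_B}t$) counts as going down in $R$. Inside a fixed fiber $\{s\}\times\Phi(s)$, $R$ is just the reverse tree order $>$ of $\Phi(s)$.

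The plan is to prove the following claim by well-founded induction on $\rho_{T_B}(s)$, which is available since $T_B\subseteq\otr{N}{X}\cup\{\emptyset\}$ and $N$ is CLI:
\begin{equation*}
(\ast)\qquad \text{for } (s,C)\in A,\quad (s,C) \text{ lies in the well-founded part of } R \text{ and } \rho_R(s,C)<\lambda\cdot\rho_{T_B}(s)+\rho_{\Phi(s)}(C)+1.
\end{equation*}
It is convenient to note that $\rho_{\Phi(s)}(C)<\rho(\Phi(s))\le\lambda$. Fix $s$ and assume $(\ast)$ for all $t>_{T_B}s$. Then do an inner induction on $\rho_{\Phi(s)}(C)$. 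The $R$-predecessors of $(s,C)$ are of two kinds:
\begin{itemize}
\item[(i)] elements $(t,D)$ with $t>_{T_B}s$. By the outer hypothesis these have rank $<\lambda\cdot\rho_{T_B}(t)+\lambda=\lambda\cdot(\rho_{T_B}(t)+1)\le\lambda\cdot\rho_{T_B}(s)$.
\item[(ii)] elements $(s,D)$ with $C<_{\Phi(s)}D$. By the inner hypothesis these have rank $<\lambda\cdot\rho_{T_B}(s)+\rho_{\Phi(s)}(D)+1\le\lambda\cdot\rho_{T_B}(s)+\rho_{\Phi(s)}(C)$.
\end{itemize}
Taking the supremum of (rank $+1$) over both kinds gives $\rho_R(s,C)\le\lambda\cdot\rho_{T_B}(s)+\rho_{\Phi(s)}(C)$, which is $(\ast)$. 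Well-foundedness comes out simultaneously: every element of $A$ receives an ordinal rank, so no infinite $R$-descending chain exists. More directly, along such a chain the first coordinates form a $\le_{T_B}$-increasing sequence, which must stabilize because $T_B$ is well-founded; after that the second coordinates would form an infinite increasing chain in some $\Phi(s)$, which is impossible.

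The final step converts $(\ast)$ into the bound on $\rho(R)$. For $(s,C)\in A$,
\[
\rho_R(s,C)+1\le\lambda\cdot\rho_{T_B}(s)+\lambda=\lambda\cdot(\rho_{T_B}(s)+1)\le\lambda\cdot\rho(T_B).
\]
Taking the supremum over $A$ gives $\rho(R)\le\lambda\cdot\rho(T_B)$.

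The main obstacle is only ordinal-arithmetic care. Left multiplication by $\lambda$ is monotone and satisfies $\lambda\cdot(\gamma+1)=\lambda\cdot\gamma+\lambda$, and these are exactly the facts used in (i). Some fibers could be empty or have rank $0$; these add nothing and do not affect the bound.
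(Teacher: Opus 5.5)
Your proof is correct and follows essentially the paper's route: the same pointwise bound $\rho_R(s,C)\le\lambda\cdot\rho_{T_B}(s)+\rho_{\Phi(s)}(C)$, then the same estimate $\lambda\cdot(\rho_{T_B}(s)+1)\le\lambda\cdot\rho(T_B)$. The differences are minor. You run a nested induction (over $T_B$, then inside $\Phi(s)$), which gives well-foundedness along the way. Your separate bounds for the two kinds of $R$-predecessors are also more careful than the paper's displayed chain, which bounds everything by a supremum over $\{D:C<_{\Phi(s)}D\}$ and so is not literally right when $C$ is terminal in $\Phi(s)$. The one step you leave unjustified is that the fiber groups are CLI; the paper gets this from $G_q/N_q\cong G_qN/N$ (Proposition~\ref{basic theorem of group homomorphism}), which is an open subgroup of the CLI group $G/N$.
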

\begin{proof}
    By Proposition~\ref{basic theorem of group homomorphism}, for any $n\in\omega$, $G_n/N_n$ is isomorphic to $G_nN/N$, and $G_nN/N=\pi_0(G_n)$ is an open subgroup of $G/N$. Since $G/N$ is CLI, $G_n/N_n$ is also CLI. Thus by Theorem~\ref{relationship between CLI groups and orbit trees}, for each $s\in T_B$, $\Phi(s)$ is a well-founded tree. Similarly, $T_B$ is a well-founded tree because $N$ is a CLI group. Then it is trivial to see that $R$ is well-founded.

    Put $\beta=\sup\{\rho(\Phi(s)):s\in T_B\}$. Then $\beta>\rho_{\Phi(s)}(C)$ for all $(s,C)\in A$. Next, we will prove
    $$\rho_{R}(s,C)\leq\beta\cdot\rho_{T_B}(s)+\rho_{\Phi(s)}(C)$$
    by induction on $\rho_R(s,C)$ for $(s,C)\in A$.
    If $\rho_{R}(s,C)=0$, then the inequality clearly holds. Now suppose the inequality holds for any $(t,D)\in A$ with $\rho_{R}(t,D)<\rho_{R}(s,C)$. Then
    \begin{align*}
        \rho_{R}(s,C)&=\sup\{\rho_{R}(t,D)+1:(t,D)\,R\,(s,C)\} \\
        &\leq\sup\{\beta\cdot\rho_{T_B}(t)+\rho_{\Phi(t)}(D)+1:s<_{T_B}t\lor(s=t\land C<_{\Phi(t)}D)\} \\
        &=\sup(\{\beta\cdot\rho_{T_B}(t)+\rho_{\Phi(t)}(D)+1:s<_{T_B}t\land D\in\Phi(t)\} \\
        &\qquad\qquad\cup\{\beta\cdot\rho_{T_B}(s)+\rho_{\Phi(s)}(D)+1:C<_{\Phi(s)}D\}) \\
        &\leq\sup\{\beta\cdot\rho_{T_B}(s)+\rho_{\Phi(s)}(D)+1:C<_{\Phi(s)}D\}\\
        &=\beta\cdot\rho_{T_B}(s)+\rho_{\Phi(s)}(C).
    \end{align*}
    This finishes the induction.

    Finally, from the inequality above we can see that
    \begin{align*}
        \rho(R)&=\sup\{\rho(s,C)+1:(s,C)\in A\}\\
        &\leq\sup\{\beta\cdot\rho_{T_B}(s)+\rho_{\Phi(s)}(C)+1:(s,C)\in A\}\\
        &\leq\beta\cdot\rho(T_B).
    \end{align*}

\end{proof}
For $x\in X$, let $r_x=\max\{n\in\omega:G_n\cdot x\ne\{x\}\}$ and
$$I_x=\{(n,G_n\cdot x):n\in\omega\land G_n\cdot x\ne\{x\}\}.$$
We define a function $\psi_x$ on $I_x$ as follows: for $(n,G_n\cdot x)\in I_x$,
\begin{enumerate}
    \item[(i)] if $n\in[0,q_{x,1})$, then
    $$\psi_x(n,G_n\cdot x)=(\emptyset,(n,\pi_0(G_n)\cdot[x]_N));$$
    \item[(ii)] if there exists an $i>0$ such that $n\in[q_{x,i},q_{x,i+1})$, then
    $$\psi_x(n,G_n\cdot x)=((q_{x,i-1},N_{q_{x,i-1}}\cdot x),(n-q_{x,i},\pi_{q_{x,i}}(G_n)\cdot[x]_{N_{q_{x,i}}})).$$
\end{enumerate}

\begin{lemma}\label{psix is order preserving}
    For any $x\in X$, the image of $\psi_x$ is contained in $A$. Moreover, for any $(m,G_m\cdot x),(n,G_n\cdot x)\in I_x$, if $(m,G_m\cdot x)<(n,G_n\cdot x)$, then we have $\psi_x(n,G_n\cdot x)\,R\,\psi_x(m,G_m\cdot x)$.
\end{lemma}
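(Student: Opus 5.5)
We verify the two assertions separately. Membership in $A$ is a case check using Lemma~\ref{property of sxgn} and Lemma~\ref{property of qxn}. The order-preservation then splits according to whether $m$ and $n$ lie in the same interval $[q_{x,i},q_{x,i+1})$ or not. Throughout, $x$ is fixed and $(n,G_n\cdot x)\in I_x$, so $G_n\cdot x\neq\{x\}$. Since $q_{x,i}$ is strictly increasing (Lemma~\ref{property of qxn}(1)), every $n$ falls into exactly one of cases (i), (ii), so $\psi_x$ is well defined.

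\emph{Image in $A$, case (i).} Here $n<q_{x,1}$. If $n=0$, the node $(0,\pi_0(G_0)\cdot[x]_N)$ lies in $(T_{\pi(\mathcal{G})}^{X/N})^+$ by the definition of the ``$+$'' tree, whether or not the orbit is a singleton. If $n>0$, then $q_{x,1}>1$. By definition $q_{x,1}=\max\{s(x,\mathcal{G},N),1\}$, so $q_{x,1}=s(x,\mathcal{G},N)$. By the minimality in Lemma~\ref{property of sxgn}(2), $\pi_0(G_n)\cdot[x]_N\neq\{[x]_N\}$, so the node is a genuine node of $T_{\pi(\mathcal{G})}^{X/N}$.

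\emph{Image in $A$, case (ii).} Here $n\in[q_{x,i},q_{x,i+1})$ with $i>0$.
\begin{itemize}
\item \emph{First coordinate.} We must show $(q_{x,i-1},N_{q_{x,i-1}}\cdot x)\in T_B$, i.e.\ that $N_{q_{x,i-1}}\cdot x$ is not a singleton. By Lemma~\ref{property of qxn}(2), $G_n\cdot x\subseteq G_{q_{x,i}}N_{q_{x,i-1}}\cdot x=N_{q_{x,i-1}}\cdot x$. Since $G_n\cdot x\neq\{x\}$, the orbit $N_{q_{x,i-1}}\cdot x$ is not a singleton.
\item \emph{The action.} The same lemma shows that $G_{q_{x,i}}$ preserves $N_{q_{x,i-1}}\cdot x$. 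Hence $G_{q_{x,i}}/N_{q_{x,i}}$ acts on $N_{q_{x,i-1}}\cdot x/N_{q_{x,i}}$ by Proposition~\ref{induced action of quotient group}, and $\Phi$ of this node is exactly the ``$+$'' orbit tree of that action. Its $k$-th group is $\pi_{q_{x,i}}(G_{q_{x,i}+k})$.
\item \emph{Second coordinate.} The node $(n-q_{x,i},\pi_{q_{x,i}}(G_n)\cdot[x]_{N_{q_{x,i}}})$ has the correct level. If $n=q_{x,i}$, it lies in the ``$+$'' part at level $0$. If $n>q_{x,i}$, then $q_{x,i+1}>q_{x,i}+1$. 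This forces $s(x,\mathcal{G}_{q_{x,i}},N_{q_{x,i}})>1$, and in particular $G_{q_{x,i}}N_{q_{x,i}}\cdot x\neq N_{q_{x,i}}\cdot x$. So Lemma~\ref{property of qxn}(3) applies: $q_{x,i+1}$ is the least $p$ fixing $[x]_{N_{q_{x,i}}}$. Since $n<q_{x,i+1}$, the orbit $\pi_{q_{x,i}}(G_n)\cdot[x]_{N_{q_{x,i}}}$ is not a singleton.
\end{itemize}

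\emph{Order-preservation.} Let $(m,G_m\cdot x)<(n,G_n\cdot x)$, so $m<n$ and $G_n\cdot x\subseteq G_m\cdot x$.
\begin{itemize}
\item \emph{Same interval.} If $m$ and $n$ lie in the same interval ($[0,q_{x,1})$ or $[q_{x,i},q_{x,i+1})$), the first coordinates coincide. The second coordinates sit at levels $m-q_{x,i}<n-q_{x,i}$, with $\pi(G_n)\cdot[x]\subseteq\pi(G_m)\cdot[x]$ because $G_n\subseteq G_m$. Hence they are $<_{\Phi(s)}$-related in the correct direction, and $\psi_x(n,G_n\cdot x)\,R\,\psi_x(m,G_m\cdot x)$.
\item \emph{Different intervals.} If $m$ lies in interval $j$ and $n$ in interval $i$ with $j<i$, we compare first coordinates. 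When $j=0$ the first coordinate of $\psi_x(m,\cdot)$ is $\emptyset$, and $\emptyset<_{T_B}$ every other node. Otherwise $q_{x,j-1}<q_{x,i-1}$ and $N_{q_{x,i-1}}\cdot x\subseteq N_{q_{x,j-1}}\cdot x$, so the corresponding nodes are $<$-related in $T_{\mathcal{N}}^X$. In either case $R$ holds via its first clause.
\end{itemize}

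The main obstacle is the non-singleton verification in case (ii). It requires pairing the index shift in $\Phi$, whose node at stage $i-1$ uses $q_{x,i}$, with the exact hypothesis of Lemma~\ref{property of qxn}(3), and treating the degenerate case $s=0$, where the $\max\{\cdot,1\}$ in the definition of $q_{x,n}$ is active, via the ``$+$'' convention.
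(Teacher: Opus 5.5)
Your proof is correct and follows essentially the same route as the paper: you verify membership in $A$ by a case check using Lemma~\ref{property of sxgn}(2), Lemma~\ref{property of qxn}(2),(3) and the ``$+$'' convention, then compare according to whether $m,n$ lie in the same interval $[q_{x,i},q_{x,i+1})$. The differences are minor. You split on $n=q_{x,i}$ versus $n>q_{x,i}$, where the paper splits on whether $G_{q_{x,i}}N_{q_{x,i}}\cdot x=N_{q_{x,i}}\cdot x$. You also explicitly check two points the paper leaves implicit: that $(q_{x,i-1},N_{q_{x,i-1}}\cdot x)$ is a genuine node of $T_B$, and that $G_{q_{x,i}}$ preserves $N_{q_{x,i-1}}\cdot x$.
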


\begin{proof}
    Take any $(n,G_n\cdot x)\in I_x$. Since $(q_{x,i})$ is strictly increasing and $q_{x,0}=0$, there exists a unique $i\in\omega$ such that $n\in[q_{x,i},q_{x,i+1})$. Now if $i>0$, we consider the following two cases:

    Case 1: If $G_{q_{x,i}}N_{q_{x,i}}\cdot x\ne N_{q_{x,i}}\cdot x$, then Lemma~\ref{property of qxn}(3) yields
    $$(n-q_{x,i},\pi_{q_{x,i}}(G_n)\cdot[x]_{N_{q_{x,i}}})\in T_{\pi(\mathcal{G}_{q_{x,i}})}^{N_{q_{x,i-1}}\cdot x/N_{q_{x,i}}}\subseteq\Phi(q_{x,i-1},[x]_{N_{q_{x,i-1}}}).$$

    Case 2: If $G_{q_{x,i}}N_{q_{x,i}}\cdot x=N_{q_{x,i}}\cdot x$, we have $q_{x,i+1}=q_{x,i}+1$. This implies $n=q_{x,i}$, hence
    $$(n-q_{x,i},\pi_{q_{x,i}}(G_n)\cdot[x]_{N_{q_{x,i}}})\in(T_{\pi(G_{q_{x,i}})}^{N_{q_{x,i-1}}\cdot x/N_{q_{x,i}}})^+=\Phi(q_{x,i-1},[x]_{N_{q_{x,i-1}}}).$$
    Thus $\psi_x(n,G_n\cdot x)$ lies in $A$. The same argument applies for $i=0$. This shows that the image of $\psi_x$ is contained in $A$.

    For any $(m,G_m\cdot x)<(n,G_n\cdot x)$ in $I_x$, we can find unique $i,j\in\omega$ such that $m\in[q_{x,i},q_{x,i+1})$ and $n\in[q_{x,j},q_{x,j+1})$. Clearly, $i\le j$. We consider the following cases:

    Case 1: If $0<i<j$, then $q_{x,i-1}<q_{x,j-1}$. Thus
    $$(q_{x,i-1},N_{q_{x,i-1}}\cdot x)<_{T_B}(q_{x,j-1},N_{q_{x,j-1}}\cdot x).$$
    Then we have $\psi_x(n,G_n\cdot x)\,R\,\psi_x(m,G_m\cdot x)$.

    Case 2: If $0<i=j$, then $m-q_{x,i}<n-q_{x,i}$. So
    $$(m-q_{x,i},\pi_{q_{x,i}}(G_m)\cdot[x]_{N_{q_{x,i}}})<_{\Phi(s)}(n-q_{x,i},\pi_{q_{x,i}}(G_n)\cdot[x]_{N_{q_{x,i}}}),$$
    where $s=(q_{x,i-1},N_{q_{x,i-1}}\cdot x)$. Thus $\psi_x(n,G_n\cdot x)\,R\,\psi_x(m,G_m\cdot x)$ also holds.

    The remaining cases $0=i<j$ and $0=i=j$ can be treated similarly, in each case we can obtain
    $$\psi_x(n,G_n\cdot x)\,R\,\psi_x(m,G_m\cdot x).$$
    This finishes the proof.
\end{proof}

Let $\mathcal{I}=\{I_x:x\in X\}$ and $\mathcal{F}=\{\psi_x:x\in X\}$. Then it is easy to see that $\bigcup\mathcal{I}=\otr{G}{X}$.

\begin{lemma}\label{family of psix are compatible}
For $x,y\in X$, if $(n,C)\in I_x\cap I_y$, then $\psi_x(n,C)=\psi_y(n,C)$.
\end{lemma}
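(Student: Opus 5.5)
The plan is to show that the whole sequence $(q_{x,i})_i$ up to the relevant index, and the $N$-orbits $N_{q_{x,i}}\cdot x$, depend only on the node $(n,C)$. Then the two defining clauses of $\psi_x$ and $\psi_y$ coincide term by term. Since $(n,C)\in I_x\cap I_y$, we have $C=G_n\cdot x=G_n\cdot y$, so there is $g\in G_n$ with $y=g\cdot x$.

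\emph{Step 1: the $q$-values up to the block containing $n$ agree.} Let $i$ be the unique index with $n\in[q_{x,i},q_{x,i+1})$. I will prove by induction on $j\le i$ the two statements
\[
q_{y,j}=q_{x,j}\quad\text{and}\quad y\in G_{q_{x,j}}N_{q_{x,j-1}}\cdot x .
\]
The second statement, for $j=0$, just says $y\in G\cdot x$.

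The key input is Lemma~\ref{property of qxn}(2): $G_{q_{x,j}}N_{q_{x,j-1}}\cdot x=N_{q_{x,j-1}}\cdot x$. For $j\le i$ we have $q_{x,j}\le n$, so $g\in G_n\subseteq G_{q_{x,j}}$. Hence $y=g\cdot x\in N_{q_{x,j-1}}\cdot x$ for every $1\le j\le i$. In particular $y\in N_{q_{x,i-1}}\cdot x$ when $i\ge 1$.

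Lemma~\ref{property of qxn}(4), applied with the index $i-1$, then gives $q_{y,j}=q_{x,j}$ for all $j\le i$. Applying the same argument with $x$ and $y$ swapped (using $g^{-1}$) shows that $n$ lies in the same block $[q_{y,i},q_{y,i+1})$ for $y$. The case $i=0$ is trivial, since $q_{\cdot,0}=0$ and membership in $[0,q_{x,1})$ is checked symmetrically.

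One subtlety: to confirm that $n<q_{y,i+1}$ I need $q_{y,i+1}=q_{x,i+1}$. This requires $y\in N_{q_{x,i}}\cdot x$, which need not hold. I would avoid the issue by arguing symmetrically: if $n\ge q_{y,i+1}$, then by the above applied to $y$ we get $x\in N_{q_{y,i}}\cdot y$, and Lemma~\ref{property of qxn}(4) then forces $q_{x,i+1}=q_{y,i+1}\le n$, a contradiction. This is the step I expect to be the main obstacle, and the symmetric argument is how I plan to overcome it.

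\emph{Step 2: the components agree.} With $i$ common to $x$ and $y$, the first component of $\psi_x(n,C)$ is $(q_{x,i-1},N_{q_{x,i-1}}\cdot x)$, or $\emptyset$ when $i=0$. This equals the corresponding component for $y$, because $y\in N_{q_{x,i-1}}\cdot x$.

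For the second component, I need
\[
\pi_{q_{x,i}}(G_n)\cdot[x]_{N_{q_{x,i}}}=\pi_{q_{x,i}}(G_n)\cdot[y]_{N_{q_{x,i}}}.
\]
This holds since $[y]=[g\cdot x]=\pi_{q_{x,i}}(g)\cdot[x]$ by Proposition~\ref{induced action of quotient group}, and $g\in G_n$. The same computation with $\pi_0$ and $[\cdot]_N$ handles clause (i). Therefore $\psi_x(n,C)=\psi_y(n,C)$.
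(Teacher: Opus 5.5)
Your proof is correct and follows essentially the paper's route: you use $G_{q_{x,i}}N_{q_{x,i-1}}\cdot x=N_{q_{x,i-1}}\cdot x$ (Lemma~\ref{property of qxn}(2)) to get $y\in N_{q_{x,i-1}}\cdot x$, invoke Lemma~\ref{property of qxn}(4) to match the $q$-values, and then compare the quotient orbits via Proposition~\ref{induced action of quotient group}. Your symmetric argument that $n<q_{y,i+1}$, so that $\psi_y(n,C)$ is computed from the same block and clause as $\psi_x(n,C)$, is a real improvement, because the paper's proof uses this fact without checking it.
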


\begin{proof}
Take any $x,y\in X$, without loss of generality assume $I_x\cap I_y\neq\emptyset$. Note that
$$I_x=\{(0,G_0\cdot x),(1,G_1\cdot x),\ldots,(r_x,G_{r_x}\cdot x)\},$$
$$I_y=\{(0,G_0\cdot y),(1,G_1\cdot y),\ldots,(r_y,G_{r_y}\cdot y)\}.$$
Let $r=\min\{r_x,r_y\}$, then $G_n\cdot x=G_n\cdot y$ for $n=0,1,\ldots,r$, and thus
$$I_x\cap I_y=\{(0,G_0\cdot x),(1,G_1\cdot x),\ldots,(r,G_r\cdot x)\}.$$
Take any $(m,G_m\cdot x)\in I_x\cap I_y$, then there is a unique $i\in\omega$ such that $m\in[q_{x,i},q_{x,i+1})$.

If $i=0$, since $N$ is a normal subgroup, from $G_m\cdot x=G_m\cdot y$ we obtain
$$G_mN\cdot x=NG_m\cdot x=NG_m\cdot y=G_mN\cdot y.$$
Therefore, by Proposition~\ref{induced action of quotient group} we have $\pi_0(G_m)\cdot[x]_N=\pi_0(G_m)\cdot[y]_N$. This implies $\psi_x(m,G_m\cdot x)=\psi_y(m,G_m\cdot y)$.

    If $i>0$, then by Lemma~\ref{property of qxn}(2) we get
    $$N_{q_{x,i-1}}\cdot x=G_{q_{x,i}}N_{q_{x,i-1}}\cdot x\supseteq G_{q_{x,i}}\cdot x=G_{q_{x,i}}\cdot y.$$
    Hence $y\in N_{q_{x,i-1}}\cdot x$. So using Lemma~\ref{property of qxn}(4), we obtain $q_{x,i-1}=q_{y,i-1}$ and $q_{x,i}=q_{y,i}$. This implies $N_{q_{y,i-1}}\cdot y=N_{q_{x,i-1}}\cdot x$. On the other hand, since $G_m\cdot x=G_m\cdot y$ and $N_{q_{x,i}}=N_{q_{y,i}}$ is a normal subgroup of $G_{q_{x,i}}=G_{q_{y,i}}$, we have $G_mN_{q_{x,i}}\cdot x=G_mN_{q_{y,i}}\cdot y$. By Proposition~\ref{induced action of quotient group}, we can imply that
    $$\pi_{q_{x,i}}(G_m)\cdot[x]_{N_{q_{x,i}}}=\pi_{q_{y,i}}(G_m)\cdot[y]_{N_{q_{y,i}}}.$$
    Thus $\psi_x(m,G_m\cdot x)=\psi_y(m,G_m\cdot y)$ also holds.
\end{proof}

Now we are ready to estimate the rank of $\otr{G}{X}$.
\begin{lemma}\label{upper bound on rank of orbit tree}
    If $G/N$ is nontrivial, i.e., $G\neq N$, then
    $$\rho(\otr{G}{X})\leq\rho(\pi(\mathcal{G}))\cdot(\rho(\mathcal{N})+1).$$
\end{lemma}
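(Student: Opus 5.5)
The plan is to glue the maps $\psi_x$ into a single homomorphism and then apply Lemma~\ref{upper bound on R}. Define $\Psi:\otr{G}{X}\to A$ by $\Psi(n,C)=\psi_x(n,C)$ for any $x$ with $(n,C)\in I_x$. This is legitimate because $\bigcup\mathcal{I}=\otr{G}{X}$, and Lemma~\ref{family of psix are compatible} makes $\Psi$ independent of the choice of $x$. By Lemma~\ref{psix is order preserving}, $\Psi$ maps into $A$.

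Next I check that $\Psi$ is a homomorphism from $(\otr{G}{X},>)$ to $(A,R)$. Take $(m,C)<(n,D)$ in $\otr{G}{X}$ and pick any $x\in D$. Then $D=G_n\cdot x$, and since $D\subseteq C$ we get $C=G_m\cdot x$. Both nodes are nonsingleton, so both lie in $I_x$. Lemma~\ref{psix is order preserving} then gives $\Psi(n,D)\,R\,\Psi(m,C)$. Consequently $\rho(\otr{G}{X})\le\rho(R)$, with the rank of the tree read as the rank of the well-founded relation $>$, as noted in the text.

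By Lemma~\ref{upper bound on R}, $\rho(R)\le\beta\cdot\rho(T_B)$, where $\beta=\sup\{\rho(\Phi(s)):s\in T_B\}$. We already have $\rho(T_B)\le\rho(\mathcal{N})+1$, so it remains to show $\beta\le\rho(\pi(\mathcal{G}))$, which I expect to be the main obstacle.

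\begin{itemize}
\item For $s=\emptyset$, the tree $\Phi(\emptyset)=(T^{X/N}_{\pi(\mathcal G)})^+$ has the same rank as $T^{X/N}_{\pi(\mathcal G)}$ whenever the latter is nonempty, and rank $1$ otherwise. Here $G/N$ is identified with $G_0/N_0$ and $X/N$ is a countable discrete $G/N$-space. By \cite[Corollary 3.12]{DingW}, this rank is at most $\rho(\pi(\mathcal G))$, which is $\ge1$ because $G/N$ is nontrivial. This is exactly where the hypothesis $G\ne N$ is needed.
\item For $s=(q_{x,n},N_{q_{x,n}}\cdot x)$, the tree $\Phi(s)$ is built from the action of $G_k/N_k$ with $k=q_{x,n+1}$, via $\pi(\mathcal G_k)$, on the countable discrete space $N_{q_{x,n}}\cdot x/N_k$. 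By Lemma~\ref{property of qxn}(2) this space is invariant under $G_k$, so it is a $G_k/N_k$-space, and \cite[Corollary 3.12]{DingW} bounds its rank by $\rho(\pi(\mathcal G_k))$.
\item To compare $\rho(\pi(\mathcal G_k))$ with $\rho(\pi(\mathcal G))$, identify $G_k/N_k$ with the open subgroup $\pi_0(G_k)$ of $G/N$ via Proposition~\ref{basic theorem of group homomorphism}. Under this identification $\pi(\mathcal G_k)$ corresponds to the tail $(\pi_0(G_{k+n}))_n$. Each $\pi_0(G_k)/\pi_0(G_{k+n})$ embeds into $G/N\big/\pi_0(G_{k+n})$, and the $\pi_0(G_{k+j})$-orbits there refine the $\pi_0(G_j)$-orbits. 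So an order-preserving map in the spirit of Lemma~\ref{subaction induced orbit tree} and Proposition~\ref{uniform reduction induces tree embedding}(1) should give $\rho(\pi(\mathcal G_k))\le\rho(\pi(\mathcal G))$, possibly after a level shift. Shifting levels by $k$ only raises ranks, so the shifted tree is dominated by the original.
\item Finally, the singleton roots added by $^+$ contribute only rank $1\le\rho(\pi(\mathcal G))$.
\end{itemize}

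Putting these together gives $\rho(\otr{G}{X})\le\rho(\pi(\mathcal G))\cdot(\rho(\mathcal N)+1)$, which is the claimed bound.
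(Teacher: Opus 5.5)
Your proposal is correct and follows the paper's proof essentially step for step. You glue the $\psi_x$ into $\Psi$ via Lemma~\ref{family of psix are compatible}, check that $\Psi$ is a homomorphism from $(\otr{G}{X},>)$ to $(A,R)$ by Lemma~\ref{psix is order preserving}, apply Lemma~\ref{upper bound on R}, and bound each $\rho(\Phi(s))$ by $\rho(\pi(\mathcal{G}))$ using \cite[Corollary 3.12]{DingW} together with the identification of $G_k/N_k$ with the open subgroup $\pi_0(G_k)$ of $G/N$. Your additions only fill in points the paper states tersely: the cleaner homomorphism check (choosing $x\in D$ so that both nodes lie in $I_x$), the explicit use of $G\neq N$ to cover the singleton roots added by ${}^+$, and the orbit-refinement argument giving $\rho(\pi(\mathcal{G}_k))\le\rho(\pi(\mathcal{G}))$ directly from Proposition~\ref{uniform reduction induces tree embedding}(1), where no level shift is actually needed.
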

\begin{proof}
Let $\Psi=\bigcup\mathcal{F}$. From Lemma~\ref{family of psix are compatible} we can see that $\Psi$ maps from $\otr{G}{X}$ to $A$.
Fix two elements in $\otr{G}{X}$, say, $(m,G_m\cdot x)\in I_x$ and $(n,G_n\cdot y)\in I_y$. Without loss of generality, we can assume that $r_x\ge r_y$,
then $(n,G_n\cdot y)=(n,G_n\cdot x)\in I_x$. By Lemma~\ref{psix is order preserving}, $(m,G_m\cdot x)<(n,G_n\cdot x)$ implies $\Psi(n,G_n\cdot x)\,R\,\Psi(m,G_m\cdot x)$,
so $\Psi$ is a homomorphism from $(\otr{G}{X},>)$ to $(A,R)$. Applying Lemma~\ref{upper bound on R}, we obtain
    \begin{align*}
        \rho(\otr{G}{X})\leq\rho(R)\leq\sup\{\rho(\Phi(s)):s\in T_B\}\cdot\rho(T_B).
    \end{align*}

    It is clear that $\rho(T_B)\leq\rho(\mathcal{N})+1$. Now for $\emptyset\in T_B$, from~\cite[Corollary 3.12]{DingW} we obtain
    \begin{align*}
        &\rho(\Phi(\emptyset))=\rho((T_{\pi(\mathcal{G})}^{X/N})^+)\leq\rho(\pi(\mathcal{G})).
    \end{align*}
    For each $(q_{x,n},N_{q_{x,n}}\cdot x)\in T_B$, we also obtain from~\cite[Corollary 3.12]{DingW} that
    \begin{align*}
        &\rho(\Phi(q_{x,n},N_{q_{x,n}}\cdot x))=\rho((T_{\pi(\mathcal{G}_{q_{x,n+1}})}^{N_{q_{x,n}}\cdot x/N_{q_{x,n+1}}})^+)\leq\rho(\pi(\mathcal{G}_{q_{x,n+1}})).
    \end{align*}
    Note that $G_{q_{x,n+1}}/N_{q_{x,n+1}}$ can be viewed as a clopen subgroup of $G/N$ by Proposition~\ref{basic theorem of group homomorphism}. So $\rho(\pi(\mathcal{G}_{q_{x,n+1}}))\leq\rho(\mathcal{G})$. Therefore, we have
    $$\rho(\otr{G}{X})\leq\rho(\pi(\mathcal{G}))\cdot(\rho(\mathcal{N})+1)$$
as desired.
\end{proof}

In the preceding lemma, if we take $X=X(\mathcal{G})$, then
$$\rho(\mathcal{G})\leq\rho(\pi(\mathcal{G}))\cdot(\rho(\mathcal{N})+1).$$
Consequently, we can give an upper bound for the complexity of $G$ in terms of the levels of $G/N$ and $N$.
\begin{theorem}
    Let $G$ be a non-archimedean Polish group, and let $N$ be a proper closed normal subgroup of $G$. If $N$ is \hier\alpha\space and $G/N$ is \hier\beta\space for some $\alpha,\beta<\omega_1$, then $G$ is \hier{\beta\cdot(\omega\cdot\alpha+1)}.
\end{theorem}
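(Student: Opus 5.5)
The plan is to apply Lemma~\ref{upper bound on rank of orbit tree} with $X=X(\mathcal{G})$ and then translate the ordinal bound into the hierarchy. Fix $\mathcal{G}=(G_n)\in\dgnb(G)$, put $\mathcal{N}=(G_n\cap N)\in\dgnb(N)$ and $\pi(\mathcal{G})=(\pi_0(G_n))\in\dgnb(G/N)$. Since $N$ is \hier\alpha\ and $G/N$ is \hier\beta, and these notions do not depend on the chosen basis, we have $\rho(\mathcal{N})\le\omega\cdot\alpha$ and $\rho(\pi(\mathcal{G}))\le\omega\cdot\beta$. In particular both groups are CLI, so conventions (a)--(e) of this section are in force. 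Because $N$ is proper, $G/N$ is nontrivial, so Lemma~\ref{upper bound on rank of orbit tree} applies and gives
\begin{equation*}
\rho(\mathcal{G})=\rho(\otr{G}{X(\mathcal{G})})\le\rho(\pi(\mathcal{G}))\cdot(\rho(\mathcal{N})+1)\le\omega\cdot\beta\cdot(\omega\cdot\alpha+1),
\end{equation*}
using monotonicity of ordinal multiplication in both arguments.

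The remaining step is bookkeeping: $\omega\cdot\beta\cdot(\omega\cdot\alpha+1)=\omega\cdot(\beta\cdot(\omega\cdot\alpha+1))$ by associativity of ordinal multiplication. Hence $\rho(\mathcal{G})\le\omega\cdot\gamma$ with $\gamma=\beta\cdot(\omega\cdot\alpha+1)$. Since $\alpha,\beta<\omega_1$, we have $\gamma<\omega_1$, and by definition $G$ is \hier\gamma.

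No step here is a real obstacle, since the heavy lifting sits in Lemmas~\ref{upper bound on R}--\ref{upper bound on rank of orbit tree}. The points to check are the following.
\begin{enumerate}
\item The hypothesis $G\ne N$ is exactly what Lemma~\ref{upper bound on rank of orbit tree} needs, and it also gives $\beta>0$, since a \hier0\ quotient is trivial.
\item The bound $\rho(\pi(\mathcal{G}_{k}))\le\rho(\pi(\mathcal{G}))$ for the clopen subgroups $G_k/N_k$ of $G/N$, used inside that lemma, is valid for the specific sequence $\pi(\mathcal{G})$. If it is not, I would instead bound every $\rho(\Phi(s))$ directly by $\omega\cdot\beta$, using that clopen subgroups of a \hier\beta\ group are \hier\beta.
\end{enumerate}
With the second fix the supremum in Lemma~\ref{upper bound on R} is still at most $\omega\cdot\beta$, and the same computation goes through.
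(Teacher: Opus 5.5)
Your proposal is correct and follows the paper's own route: apply Lemma~\ref{upper bound on rank of orbit tree} with $X=X(\mathcal{G})$ to get $\rho(\mathcal{G})\le\rho(\pi(\mathcal{G}))\cdot(\rho(\mathcal{N})+1)$, then read off the level, with the step $\omega\cdot\beta\cdot(\omega\cdot\alpha+1)=\omega\cdot(\beta\cdot(\omega\cdot\alpha+1))$ that the paper leaves implicit and you spell out. Your concern in point 2 is harmless: the paper's ``$\rho(\mathcal{G})$'' there is presumably a typo for $\rho(\pi(\mathcal{G}))$, and that inequality does hold because, under $G_k/N_k\cong\pi_0(G_k)$, the orbit trees for $\pi(\mathcal{G}_k)$ embed level-shifted by $k$ into those for $\pi(\mathcal{G})$; your fallback bound by $\omega\cdot\beta$ would also work.
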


We point out that this upper bound is rather large. For instance, even if both $N$ and $G/N$ are \hier1\space(equivalently, TSI), the rank of $G$ may still be as large as $\omega+1$. However, we do not currently know how to construct such $G$ and $N$ to attain $\omega+1$.

Then the following question arises naturally.

\begin{question}
If $N$ is \hier\alpha\space and $G/N$ is \hier\beta\space for some $\alpha,\beta<\omega_1$, is the upper bound \hier{\beta\cdot(\omega\cdot\alpha+1)} given in the above theorem optimal?
\end{question}

\section{Proof of Theorem~\ref{3-CLI}}

In this section, we will construct a proper \hier3\space group $U$, which has a closed normal subgroup $N$ such that both $N$ and $U/N$ are abelian, hence \hier1.

Let $G=\Z^\omega$ and $G_n=\prod_{i<n}\{0\}\times\prod_{i\geq n}\Z$. Define
$$L=\{f:f\mbox{ is a continuous homomorphism from $G$ to $G$}\}.$$
Then each $f\in L$ can be identified with an infinite matrix $A_f=(a_{ij})$ over $\Z$ which is row-finite, i.e., for every $i\in\omega$ there exists an $N\in\omega$ such that $a_{ij}=0$ for all $j\geq N$.

For $n\in\omega$, let $L_n$ denote the set of all infinite matrices in $L$ where the first $n$ rows of entries are $0$.
We can write any element of $L_n$ as a matrix of the form $\begin{pmatrix}O_n & O\\ * & *\end{pmatrix}$, where $O_n$ stands for the zero matrix of order $n$.
Then there is a group topology $\mathcal{T}$ on $(L,+)$ such that $(L_n)$ forms a neighborhood basis of $1_L$.
We mention the following proposition, which will be useful to construct our group. Its proof will be given after Lemma~\ref{N2N3的性质}.
\begin{lemma}\label{矩阵乘法和线性变换作用的连续性}
    \begin{enumerate}
        \item[(1)] The matrix multiplication on $L$ is continuous;
        \item[(2)] the function $(a,x)\mapsto ax$ from $L\times\Z^\omega$ to $\Z^\omega$ is continuous.
    \end{enumerate}
\end{lemma}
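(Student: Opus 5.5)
We need to prove that matrix multiplication on $L$ is continuous with respect to the topology $\mathcal{T}$ in which $(L_n)$ is a neighborhood basis of $0$ for $(L,+)$, and that the evaluation map $L\times\Z^\omega\to\Z^\omega$ is continuous. Since $\mathcal{T}$ is a group topology on $(L,+)$, the basic neighborhoods of $a\in L$ are $a+L_n$. Likewise, the basic neighborhoods of $x\in\Z^\omega$ are $x+G_n$.

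For (1), the plan is as follows. Fix $a,b\in L$ and $n\in\omega$; we look for $M$ with $(a+L_M)(b+L_M)\subseteq ab+L_n$. Expanding, $(a+u)(b+v)=ab+av+ub+uv$ for $u,v\in L_M$. We need each of the three error terms to have its first $n$ rows equal to $0$.
\begin{itemize}
\item The first $n$ rows of $ub$ and $uv$ are the first $n$ rows of $u$ multiplied on the right, so they vanish once $M\ge n$, since $u\in L_M\subseteq L_n$.
\item For $av$, row $i$ of $av$ is $\sum_j a_{ij}\,(\text{row } j \text{ of } v)$. Since $a$ is row-finite, there is $K$ with $a_{ij}=0$ for all $i<n$ and $j\ge K$. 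Taking $M\ge K$ makes rows $j<K$ of $v$ zero, so the first $n$ rows of $av$ vanish.
\end{itemize}
Thus $M=\max\{n,K\}$ works, exactly mirroring the block-matrix argument in Proposition~\ref{property of autZw}(2). We also check that products are well defined in $L$: the product of row-finite matrices is row-finite, and it corresponds to composition of continuous homomorphisms, so the sums involved are finite.

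For (2), fix $a\in L$, $x\in\Z^\omega$, and $n$. With $K$ as above and $M=\max\{n,K\}$, take $u\in L_M$ and $y\in G_M$. Then $(a+u)(x+y)-ax=ay+ux+uy$.
\begin{itemize}
\item The first $n$ coordinates of $ux$ and $uy$ vanish because rows $i<n$ of $u$ are zero.
\item Coordinate $i<n$ of $ay$ is $\sum_{j<K}a_{ij}y_j=0$, since $y_j=0$ for $j<M$ and $M\ge K$.
\end{itemize}
Hence $(a+L_M)(x+G_M)\subseteq ax+G_n$, which is the required continuity. This is the same computation as in Proposition~\ref{property of autZw}(3).

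The only point needing care is the row-finiteness bound $K$ for $a$: it is what keeps $av$ and $ay$ controlled, and it is also why the neighborhood index must depend on $a$. Everything else is routine block-matrix bookkeeping.
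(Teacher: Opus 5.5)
Your proof is correct and follows the paper's argument. The paper likewise takes $p=\max\{n,R^n(a)\}$, where your $K$ is exactly $R^n(a)$, and checks $(a+L_n)(b+L_p)\subseteq ab+L_n$ and $(a+L_n)(z+G_p)\subseteq az+G_n$ term by term. Shrinking both factors to $L_M$ rather than leaving the first as $L_n$ is an immaterial difference.
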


Now we have the following lemma.
\begin{lemma}
    $(L,+,\mathcal{T})$ is a non-archimedean Polish group.
\end{lemma}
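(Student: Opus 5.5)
The plan is to identify $(L,+,\mathcal{T})$ with a countable product of countable discrete groups, namely rows. Each $f\in L$ corresponds to its matrix $A_f=(a_{ij})$. Its $i$-th row $r_i(f)=(a_{ij})_{j\in\omega}$ is finitely supported, so it is an element of the countable group $\dsZ$ (finitely supported sequences in $\Z^\omega$, with the discrete topology).

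\textbf{Step 1: the homeomorphism.} Define $\Theta:L\to(\dsZ)^\omega$ by $\Theta(f)=(r_i(f))_{i\in\omega}$.
\begin{enumerate}
\item $\Theta$ is a group isomorphism of $(L,+)$ onto $(\dsZ)^\omega$, since addition of matrices is rowwise.
\item It is surjective. Every row-finite integer matrix defines a continuous homomorphism $\Z^\omega\to\Z^\omega$, because the $i$-th coordinate of $A x$ depends only on finitely many coordinates of $x$.
\item It is injective. A continuous homomorphism is determined by its values on the $e_j$: finitely supported vectors are dense in $\Z^\omega$, and continuity together with additivity propagates the values from them.
\item Under $\Theta$, the subgroup $L_n$ is exactly $\prod_{i<n}\{0\}\times\prod_{i\geq n}\dsZ$. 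These subgroups form a neighborhood basis of the identity in the product topology.
\end{enumerate}
Since $\mathcal{T}$ is the group topology with $(L_n)$ as a neighborhood basis of $1_L$, and a group topology is determined by a neighborhood basis of the identity, $\Theta$ is a homeomorphism onto $(\dsZ)^\omega$.

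\textbf{Step 2: the conclusion.} $(\dsZ)^\omega$ is a countable product of countable discrete spaces, so it is Polish (indeed homeomorphic to a closed subspace of Baire space). Hence $(L,+,\mathcal{T})$ is a Polish group. The $L_n$ are open subgroups forming a neighborhood basis of $1_L$, so the group is non-archimedean and $(L_n)\in\dgnb(L)$.

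\textbf{Main obstacle.} The only real content is the claim that continuous endomorphisms of $\Z^\omega$ are exactly the row-finite matrices. I would handle the necessity direction as follows. For each coordinate $i$, the map $x\mapsto f(x)(i)$ is a continuous homomorphism into the discrete group $\Z$, so its kernel is open. Therefore the kernel contains some $G_N$, and so $f(e_j)(i)=0$ for all $j\geq N$. Then $f(x)(i)=\sum_{j<N}a_{ij}x(j)$, which follows by writing $x=\sum_{j<N}x(j)e_j+y$ with $y\in G_N$. This gives both row-finiteness and the fact that $f$ is determined by its matrix. Everything else is bookkeeping.
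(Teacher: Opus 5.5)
Your proof is correct and is essentially the paper's argument: the paper writes down the complete ultrametric $d(a,b)=2^{-n}$, with $n$ the first row in which $a$ and $b$ differ, and notes that its balls around $O$ are the $L_n$. That metric is exactly the product ultrametric on $(\dsZ)^\omega$ transported along your row map $\Theta$, so your version differs only in being more thorough: it makes separability explicit and actually proves that continuous endomorphisms of $\Z^\omega$ are the row-finite matrices, which the paper takes for granted.
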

\begin{proof}
We only need to show that $(L,+,\mathcal{T})$ is Polish, then it is clearly a non-archimedean Polish group.

    For $a=(a_{ij})$ and $b=(b_{ij})$ in $L$, define
    \begin{align*}
        d(a,b)=\begin{cases}
            2^{-n},&\itx{if}n\itx{is the least number such that}\\
            &\exists\,j\in\omega\left(a_{nj}\neq b_{nj}\right),\\
            0,&\itx{if there is no such}n.
        \end{cases}
    \end{align*}
    It is straightforward to verify that $d$ is a complete ultrametric on $L$. Since $B(O,2^{-(n-1)})=\{a\in L:d(a,O)<2^{-(n-1)}\}=L_n$ for each $n\in\omega$, we can see that $d$ is compatible with $\mathcal{T}$. It follows that $L$ is a Polish group.
\end{proof}


Next, we define
\begin{align*}
    U=\Bigg\{
        \begin{pmatrix}
            1 & a & d & f\\
            0 & 1 & b & e\\
            0 & 0 & 1 & c\\
            0 & 0 & 0 & 1
        \end{pmatrix}:c,e,f\in\Z^\omega\wedge a,b,d\in L
    \Bigg\},
\end{align*}
and endow $U$ with the product topology of $L^3\times G^3$. Take
\begin{equation}\label{U中矩阵的符号}
    A=\begin{pmatrix}
        1 & a & d & f\\
        0 & 1 & b & e\\
        0 & 0 & 1 & c\\
        0 & 0 & 0 & 1
    \end{pmatrix},
    X=\begin{pmatrix}
        1 & u & x & z\\
        0 & 1 & v & y\\
        0 & 0 & 1 & w\\
        0 & 0 & 0 & 1
    \end{pmatrix}.
\end{equation}
From now on, when we use the symbol $X$ to denote a matrix in $U$, we use the letters $u,v,w,x,y,z$ to denote its entries, as in \eqref{U中矩阵的符号}. The same convention also applies to the symbol $A$. Moreover, when we use subscripted or superscripted versions of  $X$ and $A$, we also use the corresponding subscripted or superscripted versions of letters $u,v,w,\dots$ and $a,b,c,\dots$ to denote their entries.

The multiplication of $U$ is defined as usual matrix multiplication:
\begin{align*}
    AX=\begin{pmatrix}
        1 & a+u & a v + d + x & a y + d w + f + z\\
        0 & 1 & b+v & b w + e + y\\
        0 & 0 & 1 & c + w\\
        0 & 0 & 0 & 1
    \end{pmatrix}.
\end{align*}
By a straightforward verification, we can show that $U$ with this multiplication forms a group. Strictly speaking, one could define $U$  as the set of $6$-tuples, and define the multiplication accordingly. However, this representation is cumbersome, so we adopt the matrix representation instead.

Now let
\begin{align*}
    U_n=\left\{
        \begin{pmatrix}
            1 & a & d & f\\
            0 & 1 & b & e\\
            0 & 0 & 1 & c\\
            0 & 0 & 0 & 1
        \end{pmatrix}:c,e,f\in G_n\wedge a,b,d\in L_n\right\}
\end{align*}
for $n\in\omega$. We can obtain the following proposition:
\begin{proposition}\label{U是non-archimedean Polish群}
    $U$ is a non-archimedean Polish group and $\mathcal{U}=(U_n)\in\dgnb(U)$.
\end{proposition}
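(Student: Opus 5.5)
We have to show three things about $U$: it is a topological group, its topology is Polish, and $(U_n)$ is a decreasing sequence of open subgroups that forms a neighborhood basis of the identity with $U_0=U$. We take Lemma~\ref{矩阵乘法和线性变换作用的连续性} as given: matrix multiplication on $L$ and the evaluation map $L\times\Z^\omega\to\Z^\omega$ are continuous.

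\emph{Group and topology.} Each entry of the product $AX$ is a polynomial expression in the entries of $A$ and $X$. It uses only addition in $L$ or $G$, products in $L$ (such as $av$), and evaluations $L\times G\to G$ (such as $ay$ and $dw$). So multiplication $U\times U\to U$ is continuous in the product topology on $L^3\times G^3$.

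For inverses, solving $AX=I$ gives an explicit formula:
\begin{itemize}
\item $u=-a$, $v=-b$, $w=-c$;
\item $x=ab-d$;
\item $y=bc-e$;
\item $z=-ay-dw-f$, which expands to $z=-a(bc-e)+dc-f$.
\end{itemize}
These formulas are again built from continuous operations, so inversion is continuous. One should also check that $ab\in L$, i.e.\ that products of row-finite matrices are row-finite; this is needed for $U$ to be closed under multiplication and inversion.

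$U$ is homeomorphic to $L^3\times G^3$. Each factor is Polish: $L$ by the preceding lemma, and $G=\Z^\omega$ by the usual product topology. Hence $U$ is a Polish group.

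\emph{The basis $(U_n)$.} As sets, $U_n=L_n^3\times G_n^3$, and these form a neighborhood basis of the identity in the product topology. They are decreasing and $U_0=U$. Each $U_n$ is open, so it remains only to show that $U_n$ is a subgroup. This is the one real computation, and it rests on a single observation: if $a\in L_n$, meaning the first $n$ rows of $a$ vanish, then $av\in L_n$ for any $v\in L$, and $ay\in G_n$ for any $y\in G$. Indeed, row $i$ of $av$ is row $i$ of $a$ times $v$, so it vanishes for $i<n$.

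Closure under multiplication then follows directly from the product formula:
\begin{itemize}
\item $a+u,\ b+v\in L_n$;
\item $av+d+x\in L_n$;
\item $c+w,\ bw+e+y\in G_n$;
\item $ay+dw+f+z\in G_n$.
\end{itemize}
Closure under inversion follows in the same way from the inverse formulas above, since every term there has a left factor in $L_n$ or is itself in $L_n$ or $G_n$.

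An open subgroup of a Polish group is also closed. Therefore $U$ is non-archimedean and $\mathcal{U}\in\dgnb(U)$.

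The main care point is the continuity of the bilinear terms, which is exactly what Lemma~\ref{矩阵乘法和线性变换作用的连续性} supplies. The rest is routine bookkeeping.
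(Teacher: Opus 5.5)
Your proposal is correct and follows the same route as the paper. You write out $AX$ and $A^{-1}$ entrywise, get continuity from the lemma on continuity of multiplication in $L$ and of the evaluation map $L\times\Z^\omega\to\Z^\omega$, and get Polishness from the product topology on $L^3\times G^3$. The one difference is that you actually verify that each $U_n$ is an open subgroup and that $(U_n)$ is a neighborhood basis, using that a left factor in $L_n$ kills the first $n$ rows; the paper only says this follows easily. Your side worry that $ab\in L$ is immediate: $L$ consists of continuous endomorphisms of $G$, so it is closed under composition.
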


\begin{proof}
    Choose any $A,X\in U$. Note that
    \begin{align*}
        AX&=\begin{pmatrix}
            1 & a+u & a v + d + x & a y + d w + f + z\\
            0 & 1 & b+v & b w + e + y\\
            0 & 0 & 1 & c + w\\
            0 & 0 & 0 & 1
        \end{pmatrix},\\
        X^{-1}&=\begin{pmatrix}
            1 & -u & u v - x & -u v w + u y + x w - z\\
            0 & 1 & -v & v w - y\\
            0 & 0 & 1 & -w\\
            0 & 0 & 0 & 1
        \end{pmatrix}.
    \end{align*}
Therefore, the multiplication and inverse operation over $U$ are both continuous by Lemma~\ref{矩阵乘法和线性变换作用的连续性}. Hence $U$ is a topological group.
Moreover, $U$ is a non-archimedean Polish group since the product topology on $U$ is Polish. The rest of the proposition follows easily.
\end{proof}

For $X_1,X_2\in U$, we have
\begin{align}\label{一般情形下的Un陪集}
    &X_2^{-1} X_1 =
    \begin{pmatrix}
    1 & u_1 - u_2 &
    x_1 - x_2 + u_2(v_2 - v_1) &
    \Gamma
    \\
    0 & 1 & v_1 - v_2 &
    y_1 - y_2 + v_2(w_2 - w_1)
    \\
    0 & 0 & 1 & w_1 - w_2
    \\
    0 & 0 & 0 & 1
    \end{pmatrix},
\end{align}
where $\Gamma=z_1-z_2+u_2(y_2-y_1)+u_2v_2(w_1-w_2)+x_2(w_2-w_1)$. Specifically, when $u_1=u_2,v_1=v_2$ and $w_1=w_2$, we have
\begin{equation}\label{特殊情形下的U_n陪集}
    X_2^{-1}X_1=\begin{pmatrix}
    1 & 0 & x_1 - x_2 & z_1 - z_2 + u_2 (y_2 - y_1) \\
    0 & 1 & 0 & y_1 - y_2 \\
    0 & 0 & 1 & 0 \\
    0 & 0 & 0 & 1
    \end{pmatrix}.
\end{equation}
Now for $a\in L$, we define
\begin{align*}
    &R_i(a)=\min\{j\in\omega:\forall\,j'\geq j\,(a_{ij'}=0)\},\\
    &R^n(a)=\max\{R_0(a),R_1(a),\dots,R_{n-1}(a)\}.
\end{align*}
for $i,n\in\omega$. Fix an $n\geq 1$. For each $X\in U$, we define
\begin{align*}
    N_1(X)&=n,\\
    N_2(X)&=\max\{N_1(X),R^n(u)\},\\
    N_3(X)&=\max\{N_2(X),R^n(uv-x),R^n(v)\}.
\end{align*}
The function $N_2$ and $N_3$ have the following properties:
\begin{lemma}\label{N2N3的性质}
    \begin{enumerate}
        \item[(1)] if $X_1U_n=X_2U_n$, then $N_i(X_1)=N_i(X_2)$ for $i=1,2,3$.
        \item[(2)] $N_3(X)$ is the least $N\in\omega$ such that $U_N\cdot XU_n=\{XU_n\}$;
        \item[(3)] for any $X'U_n\in U_{N_2(X)}\cdot XU_n$, $N_3(X')=N_3(X)$ holds;
        \item[(4)] for any $X'U_n\in U_n\cdot XU_n$, $N_2(X')=N_2(X)$ holds.
    \end{enumerate}
\end{lemma}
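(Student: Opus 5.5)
The plan is to reduce everything to explicit matrix computations in $U$, using two elementary facts about row-finite matrices. First, for $p,q\in L$, the first $n$ rows of $pq$ depend only on the first $n$ rows of $p$. Second, for $p\in L$ and $N\geq n$, the first $n$ rows of $pb$ vanish for all $b\in L_N$ iff the first $n$ rows of $pc$ vanish for all $c\in G_N$ (as a column vector) iff $N\geq R^n(p)$. For the second fact: $b\in L_N$ has arbitrary rows from index $N$ on, so $pb$ has vanishing first $n$ rows exactly when every entry $p_{ij}$ with $i<n$, $j\geq N$ is zero. Throughout, $A\cdot XU_n=XU_n$ iff $X^{-1}AX\in U_n$, and by (\ref{一般情形下的Un陪集}) membership $X_2^{-1}X_1\in U_n$ is read off entrywise.

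For (1), suppose $X_2^{-1}X_1\in U_n$. By (\ref{一般情形下的Un陪集}), $u_1-u_2,\ v_1-v_2\in L_n$ and $x_1-x_2+u_2(v_2-v_1)\in L_n$. Since $R^n$ only sees the first $n$ rows, $R^n(u_1)=R^n(u_2)$ and $R^n(v_1)=R^n(v_2)$. For the remaining term, the first fact shows that the first $n$ rows of $u_1v_1$ equal those of $u_2v_1$. Hence the first $n$ rows of $(u_1v_1-x_1)-(u_2v_2-x_2)$ are those of $u_2(v_1-v_2)-(x_1-x_2)$, which vanish. So $N_1,N_2,N_3$ agree. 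Part (4) is similar and easier: by (1) we may take $X'=AX$ with $A\in U_n$. Then $u'=a+u$, and $a\in L_n$ has zero first $n$ rows, so $R^n(u')=R^n(u)$.

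For (3), again by (1) we may assume $X'=AX$ with $A\in U_{N_2(X)}$. Then $u'=a+u$ and $v'=b+v$ with $a,b\in L_{N_2(X)}\subseteq L_n$, so $R^n(u')=R^n(u)$ and $R^n(v')=R^n(v)$. Moreover
$$u'v'-x'=(a+u)(b+v)-(av+d+x).$$
Since $a,d$ have zero first $n$ rows, the first $n$ rows of this expression are those of $uv-x+ub$. The first $n$ rows of $ub$ vanish by the second fact, because $b\in L_{N_2(X)}$ and $N_2(X)\geq R^n(u)$. Hence $R^n(u'v'-x')=R^n(uv-x)$, and so $N_3(X')=N_3(X)$.

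Part (2) is the main step. I will compute $X^{-1}AX$ for $A\in U_N$ using the formulas for $AX$ and $X^{-1}$ in the proof of Proposition~\ref{U是non-archimedean Polish群}. The superdiagonal entries are $a,b,c$. The $(1,3)$ entry is $d+av-ub$, the $(2,4)$ entry is $e+bw-vc$, and the $(1,4)$ entry is $f$ plus a combination of terms such as $ae$, $ue$, $(uv-x)c$, $ubw$ and $avw$. I expect the bookkeeping of this $(1,4)$ entry to be the real obstacle. It must be written out carefully and regrouped so that every term is either killed by $N\geq n$ (terms whose left factor is $a$ or $d$) or is of the form $u\cdot(\text{something in }L_N\text{ or }G_N)$, $v\cdot c$, or $(uv-x)\cdot c$.

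Sufficiency: if $N\geq N_3(X)$, every term has zero first $n$ rows or coordinates by the two facts, so $X^{-1}AX\in U_n$. Necessity: for $N<N_3(X)$ I will exhibit an $A\in U_N$ with a single nonzero block that moves $XU_n$. If $N<n$, take $a$ with a nonzero entry in row $N$. If $N<R^n(u)$, take $b\in L_N$ supported on one row $j\geq N$ with $u_{ij}\neq0$ for some $i<n$, so that $ub$ fails in the $(1,3)$ entry. If $N<R^n(v)$, take $c=e_j$ with $v_{ij}\neq 0$, so that $vc$ fails in the $(2,4)$ entry. If $N<R^n(uv-x)$, take $c=e_j$ for the analogous $j$; then the $(1,4)$ entry fails through $(uv-x)c$ once the $(2,4)$ constraint is accounted for, and if $vc$ also fails we are done already. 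This shows that $N_3(X)$ is the least such $N$.
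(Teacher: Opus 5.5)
Your proposal is correct. Part (1) is the paper's argument, but for (2)--(4) you take a more direct route.

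\textbf{The paper's route.} The paper never computes $X^{-1}AX$. Instead it writes down explicit representatives $F_1(A,X),F_2(A,X),F_3(A,X)$ of the coset $AXU_n$. These keep $u,v,w$, then also $x$, then also $y$, equal to those of $X$. It then proves a chain of equivalences stating for which $p$ each normal form is valid for all $A\in U_p$: namely $p\ge n$, then $p\ge N_2(X)$, then $p\ge N_2(X)\wedge p\ge R^n(v)$. The chain ends with $p\ge N_3(X)\iff U_p\cdot XU_n=\{XU_n\}$, which is (2). Parts (4) and (3) are then obtained by replacing $X'$ with $F_1(A,X)$, resp.\ $F_2(A,X)$, and invoking (1).

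\textbf{Your route.} You read off the stabilizer condition entrywise from a single conjugate. You handle (3) and (4) by computing $N_2(AX)$ and $N_3(AX)$ on the first $n$ rows, which makes them independent of (2). The computation you deferred comes out exactly as you predicted:
\[
X^{-1}AX=\begin{pmatrix}1&a&d+av-ub&f+ay+dw-ue-ubw+(uv-x)c\\0&1&b&e+bw-vc\\0&0&1&c\\0&0&0&1\end{pmatrix}.
\]
There are no $ae$ or $avw$ terms, though such terms would have been harmless anyway. With $a=b=d=e=f=0$ the $(1,4)$ entry is literally $(uv-x)c$. So your last necessity case fails outright in that entry and needs no appeal to the $(2,4)$ constraint.

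\textbf{Comparison.} Your route is shorter, and the single-block witnesses make the necessity direction transparent. The paper's route yields extra information along the way: a layered description of the orbits $U_p\cdot XU_n$. For $p\ge n$, every coset in the orbit has a representative with the same $u,v,w$ as $X$. For $p\ge N_2(X)$, it has one with the same $u,v,w,x$. This mirrors the levels $n$, $N_2(X)$, $N_3(X)$ of the orbit tree $T_{\mathcal{U}}^{U/U_n}$ analyzed afterwards.
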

\begin{proof}
    For $X\in U$ and $p\in\omega$, first we present the following observations:
    \begin{enumerate}
        \item[(i)] if $p\geq n$, then $L_pv\subseteq L_n$. To see this, we just use block matrix multiplication
        \begin{align*}
            &\begin{pmatrix}
                O_p & O \\
                A & B
            \end{pmatrix}\begin{pmatrix}
                C & D \\
                E & F
            \end{pmatrix}=\begin{pmatrix}
                O_p & O\\
                * & *
            \end{pmatrix}\in L_p\subseteq L_n.
        \end{align*}
        \item[(ii)] if $p\geq n$, then $uL_p\subseteq L_n\iff p\geq R^n(u)$. This is also a direct application of block matrix multiplication
        \begin{align*}
            &\begin{pmatrix}
                A_{00} & A_{01} & A_{02}\\
                A_{10} & A_{11} & A_{12}\\
                A_{20} & A_{21} & A_{22}
            \end{pmatrix}\begin{pmatrix}
                O_n & O & O\\
                O & O_{p-n} & O\\
                B & C & D
            \end{pmatrix}=\begin{pmatrix}
                A_{02}B & A_{02}C & A_{02}D\\
                A_{12}B & A_{12}C & A_{12}D\\
                A_{22}B & A_{22}C & A_{22}D
            \end{pmatrix}.
        \end{align*}
        \item[(iii)] $vG_p\subseteq G_n\iff p\geq R^n(v)$.
        \item[(iv)] if $w\neq(0,0,\dots)$, then $uL_pw\subseteq G_n\iff p\geq R^n(u)$. To see this, note that
        $$\prod_{i<p}\{0\}\times\prod_{i\geq p}(k\mathbb{Z})\subseteq L_pw\subseteq G_p,$$
        where $k=\min\{|w(j)|:j\in\omega\land w(j)\neq 0\}$.
    \end{enumerate}

To prove clause (1), suppose $X_1U_n=X_2U_n$. Since $X_2^{-1}X_1\in U_n$, we can see that $u_1-u_2\in L_n$ and $v_1-v_2\in L_n$ from (\ref{一般情形下的Un陪集}). Hence we have $R^n(u_1)=R^n(u_2)$ and $R^n(v_1)=R^n(v_2)$. So $N_2(X_1)=N_2(X_2)$. We also have $x_1-x_2+u_2(v_2-v_1)\in L_n$ by $X_2^{-1}X_1\in U_n$.
Note that $R^n((u_2-u_1)v_1)=0$. All these together imply
    \begin{align*}
        0&=R^n(x_1-x_2+u_2(v_2-v_1))\\
        &=R^n(x_1-x_2+u_2(v_2-v_1)+(u_2-u_1)v_1)\\
        &=R^n(u_2v_2-x_2-(u_1v_1-x_1)).
    \end{align*}
This shows $R^n(u_2v_2-x_2)=R^n(u_1v_1-x_1)$. So $N_3(X_2)=N_3(X_1)$.


Before proving clauses (2), (3), and (4), we do some matrix calculations. For $A,X\in U$, we define
    \begin{align*}
        F_1(A,X)&=\begin{pmatrix}
            1 & u & - u b + x & -ubw-ue+ubc-xc+z\\
            0 & 1 & v & y - v c\\
            0 & 0 & 1 & w\\
            0 & 0 & 0 & 1
        \end{pmatrix}
    \end{align*}
    We claim that for $p\in\omega$,
    \begin{equation}\label{矩阵等式不改变uvw的等价条件}
        p\geq n\iff\forall\,A\in U_p\,(AXU_n=F_1(A,X)U_n).
    \end{equation}
    To prove the right implication, we construct an equation
    \begin{align*}
        AXU_n&=\begin{pmatrix}
            1 & a+u & a v + d + x & a y + d w + f + z\\
            0 & 1 & b+v & b w + e + y\\
            0 & 0 & 1 & c + w\\
            0 & 0 & 0 & 1
        \end{pmatrix}U_n\\
        &=\begin{pmatrix}
            1 & u & \gamma_1 & \gamma_3\\
            0 & 1 & v & \gamma_2\\
            0 & 0 & 1 & w\\
            0 & 0 & 0 & 1
        \end{pmatrix}U_n
    \end{align*}
    for $A\in U_p$. Using (\ref{一般情形下的Un陪集}) and previous observations, we derive
    \begin{align*}
        \gamma_1+L_n&=av-ub+d+x+L_n=-ub+x+L_n,\\
        \gamma_2+G_n&=bw+e+y-vc+G_n=y-vc+G_n,\\
        \gamma_3+G_n&=ay+dw+f+z+u(\gamma_2-bw-e-y)+uvc-\gamma_1c+G_n\\
        &=z+u(\gamma_2-bw-e-y+vc)+ubc-xc+G_n.
    \end{align*}
    Hence
    \begin{align*}
        \gamma_1&=-ub+x,\\
        \gamma_2&=y-vc,\\
        \gamma_3&=z-ubw-ue+ubc-xc
    \end{align*}
    is one solution to our equation. This shows $AXU_n=F_1(A,X)U_n$.

    To prove the left implication, just note that
    \begin{align*}
        &\forall\,A\in U_p\left(AXU_n=F_1(A,X)U_n\right)\\
        \implies&\forall\,A\in U_p\left(a\in L_n\right)\\
        \implies&p\geq n.
    \end{align*}

    Next we define
    \begin{align*}
        F_2(A,X)&=\begin{pmatrix}
            1 & u & x & -xc+z\\
            0 & 1 & v & y - v c\\
            0 & 0 & 1 & w\\
            0 & 0 & 0 & 1
        \end{pmatrix} ,\\
        F_3(A,X)&=\begin{pmatrix}
            1 & u & x & (uv-x)c+z\\
            0 & 1 & v & y\\
            0 & 0 & 1 & w\\
            0 & 0 & 0 & 1
        \end{pmatrix}
    \end{align*}
    for $A,X\in U$. Then using (\ref{特殊情形下的U_n陪集}) and similar method as before, we obtain
    \begin{align*}
        &p\geq N_2(X)\\
        \implies&p\geq N_2(X)\land\forall\,A\in U_p\left(AXU_n=F_1(A,X)U_n\right)\\
        \implies&\forall\,A\in U_p\left(AXU_n=F_2(A,X)U_n\right)\\
        \implies&\forall\,A\in U_p\left(a\in L_n\land AXU_n=F_2(A,X)U_n\right)\\
        \implies&p\geq n\land\forall\,A\in U_p\left(F_1(A,X)U_n=F_2(A,X)U_n\right)\\
        \implies&p\geq n\land\forall\,A\in U_p\left(-ub\in L_n\right)\\
        \implies&p\geq N_2(X).
    \end{align*}
    In conclusion, we have
    \begin{equation}\label{矩阵等式不改变uvwx的等价条件}
        p\geq N_2(X)\iff\forall\,A\in U_p\left(AXU_n=F_2(A,X)U_n\right).
    \end{equation}
    Analogously, we have
    \begin{align}
        &\begin{aligned}
            &p\geq N_2(X)\land p\geq R^n(v)\\
            \implies&\forall\,A\in U_p\left(AXU_n=F_3(A,X)U_n\right)\\
            \implies&p\geq N_2(X)\land\forall\,A\in U_p\left(F_2(A,X)U_n=F_3(A,X)U_n\right)\\
            \implies&p\geq N_2(X)\land\forall\,A\in U_p\left(vc\in G_n\right)\\
            \implies&p\geq N_2(X)\land vG_p\subseteq G_n\\
            \implies&p\geq N_2(X)\land p\geq R^n(v);
        \end{aligned}\\
        &\begin{aligned}
            &p\geq N_3(X)\\
            \implies&\forall\,A\in U_p\left(AXU_n=XU_n\right)\\
            \implies&p\geq N_2(X)\land p\geq R^n(v)\land\forall\,A\in U_p\left(F_3(A,X)U_n=XU_n\right)\\
            \implies&p\geq N_2(X)\land p\geq R^n(v)\land\forall\,A\in U_p\left((uv-x)c\in G_n\right)\\
            \implies&p\geq N_2(X)\land p\geq R^n(v)\land (uv-x)G_p\subseteq G_n\\
            \implies&p\geq N_3(X).
        \end{aligned}\label{矩阵等式不动点的等价条件}
    \end{align}

    We are now ready to prove (2)(3)(4).

    From (\ref{矩阵等式不动点的等价条件}) we can see that $U_p\cdot XU_n=\{XU_n\}\iff p\geq N_3(X)$. This proves (2).

    From (\ref{矩阵等式不改变uvwx的等价条件}), for any $X'U_n\in U_{N_2(X)}\cdot XU_n$, we can assume that $u'=u$, $v'=v$, $w'=w$ and $x'=x$, which does not affect the value of $N_3(X')$ by clause (1). Thus $N_3(X')=N_3(X)$. This proves (3).

    From (\ref{矩阵等式不改变uvw的等价条件}), for any $X'U_n\in U_n\cdot XU_n$, we can assume that $u'=u$, $v'=v$ and $w'=w$, which does not change $N_2(X')$ by clause (1). Thus $N_2(X')=N_2(X)$, which shows (4).
\end{proof}

Now we turn to the proof of Lemma~\ref{矩阵乘法和线性变换作用的连续性}. We will use the observations (i) to (iii) that we built in the proof of Lemma~\ref{N2N3的性质}.
\begin{proof}[Proof of Lemma~\ref{矩阵乘法和线性变换作用的连续性}]
    (1) Choose any $a,b\in L$. For any $n\in\omega$, let $p=\max\{n,R^n(a)\}$. Then
    $$(a+L_n)(b+L_p)=ab+aL_p+L_nb+L_nL_p\subseteq ab+L_n.$$

    (2) Choose any $a\in L$ and $z\in G$. For $n\in\omega$, let $p=R^n(a)$. Then
    $$(a+L_n)(z+G_p)=az+aG_p+L_nz+L_nG_p\subseteq az+G_n.$$
\end{proof}


\begin{lemma}\label{U的轨道树节点的秩}
    Let $n\geq 1$ and $T=\otr{U}{U/U_n}$.
    \begin{enumerate}
        \item[(1)] For any $X\in U$, $$\rho_T(N_2(X),U_{N_2(X)}\cdot XU_n)=\max\{0,N_3(X)-N_2(X)-1\};$$
        \item[(2)] for any $X\in U$, if $n<R^n(u)$, then $$\rho_T(N_2(X)-1,U_{N_2(X)-1}\cdot XU_n)=\omega;$$
        \item[(3)] for any $X\in U$, if $n<R^n(u)$, then
        $$\rho_T(n,U_n\cdot XU_n)=\omega+(N_2(X)-n-1);$$
        \item[(4)] for any $X'U_n\in U_{n-1}\cdot XU_n$, $\rho_T(n,U_n\cdot X'U_n)<\omega\cdot 2$;
        \item[(5)] for any $X\in U$, we have $\rho_T(n-1,U_{n-1}\cdot XU_n)=\omega\cdot 2$.
    \end{enumerate}
\end{lemma}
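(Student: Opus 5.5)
We prove the five clauses in order; each later clause uses the earlier ones. The tools are the characterizations in Lemma~\ref{N2N3的性质}, the recursion of Proposition~\ref{tree}, and the bound of Proposition~\ref{L_k(T)}. Throughout, a node $(p,U_p\cdot XU_n)$ is a nonsingleton exactly when $p<N_3(X)$, by Lemma~\ref{N2N3的性质}(2) together with Lemma~\ref{N2N3的性质}(1).

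\emph{Clause (1).} By Lemma~\ref{N2N3的性质}(3), every $X'U_n$ in the orbit $U_{N_2(X)}\cdot XU_n$ satisfies $N_3(X')=N_3(X)$. Hence for $p\ge N_2(X)$ the nodes above $(N_2(X),U_{N_2(X)}\cdot XU_n)$ at level $p$ are nonsingleton iff $p<N_3(X)$. The subtree rooted there is therefore finite-height, with branches of length exactly $N_3(X)-N_2(X)$ when $N_3(X)>N_2(X)$. Counting levels gives rank $\max\{0,N_3(X)-N_2(X)-1\}$.

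\emph{Clause (2).} Put $p=N_2(X)-1$. Since $p\ge n$, equivalence (\ref{矩阵等式不改变uvw的等价条件}) tells us that $U_p$ acts via $F_1$: it fixes $u,v,w$ and moves $x$ to $x-ub$ with $b\in L_p$. Because $p<R^n(u)$, observation (ii) shows that $-ub$ ranges over matrices whose first $n$ rows have unboundedly large support, while $u$ and $v$ stay fixed. So among the children $X'U_n$, the quantity $R^n(uv-x')$, and hence $N_3(X')$, is unbounded. At the same time $N_2(X')=N_2(X)$ for all of them by Lemma~\ref{N2N3的性质}(4). Clause (1) then gives children of every finite rank and no child of rank $\ge\omega$, so the rank is exactly $\omega$. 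The delicate point is to choose $b\in L_p$ so that $R^n(u(v)-x+ub)$ really is unbounded; I would build $b$ with a single nonzero entry in the relevant row, exploiting a nonzero entry of $u$ in column $\ge p$ among the first $n$ rows.

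\emph{Clause (3).} This follows from clause (2) by descending induction from level $N_2(X)-1$ down to level $n$. All nodes in between keep the same $N_2$ by Lemma~\ref{N2N3的性质}(4), and at each level below $N_2(X)-1$ the node is not yet at the level where the unbounded branching occurs. So each step down adds exactly $1$, via Proposition~\ref{tree} and Proposition~\ref{L_k(T)}, giving $\omega+(N_2(X)-1-n)$. If instead $R^n(u)\le n$, then $N_2=n$ and clause (1) gives a finite rank.

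\emph{Clause (4).} Every child has rank $<\omega\cdot2$ by clauses (1) and (3), so (4) is immediate.

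\emph{Clause (5).} This is the expected main obstacle. At level $n-1$ the group $U_{n-1}$ can change $u$ by $a\in L_{n-1}$, and this modifies row $n-1$ of $u$ arbitrarily. Hence $R^n(u')$, and therefore $N_2(X')$, is unbounded among the children at level $n$. By clause (3) the children's ranks $\omega+k$ are unbounded in $\omega\cdot2$, while clause (4) bounds each of them below $\omega\cdot2$. Taking the supremum of $\rho+1$ over the children yields exactly $\omega\cdot2$. What remains is to verify that the choice of $a$ can be made compatibly, i.e.\ that the action of $A\in U_{n-1}$ on the $u$-coordinate is just $u\mapsto a+u$, which is read off directly from the product formula for $AX$.
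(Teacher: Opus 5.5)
Your proposal is correct and follows essentially the same route as the paper. Clause (1) comes from the fact that $N_3$ is constant on the $U_{N_2(X)}$-orbit. Clause (2) uses unboundedly many children at level $N_2(X)$ sharing the same $N_2$ but with unbounded $N_3$. Clause (3) is a descent through levels where all nodes have the same rank, and (4)--(5) follow from (1) and (3) together with an $a\in L_{n-1}$ that makes $N_2$ unbounded.

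The only differences are cosmetic. You read off the $U_p$-action from the normal form $F_1(A,X)$ and make the choice of $b\in L_p$ explicit: a single nonzero entry in a row $q\ge p$ where some row $i<n$ of $u$ is nonzero. The paper instead computes $N_3(A_kX)$ directly and simply asserts that $R^n(uL_p)$ is unbounded.
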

\begin{proof}
    (1) Using Lemma~\ref{N2N3的性质}(2)(3) we can see that for all $X'U_n\in U_{N_2(X)}\cdot XU_n$, $N_3(X)$ is the least $N\in\omega$ such that $U_N\cdot X'U_n=\{X'U_n\}$. Hence
    $$\rho_T(N_2(X),U_{N_2(X)}\cdot XU_n)=\max\{0,N_3(X)-N_2(X)-1\}.$$

    (2) Let $p=N_2(X)-1$. For any $X'U_n\in U_p\cdot XU_n$, we can see that $N_2(X)=N_2(X')$ by Lemma~\ref{N2N3的性质}(4). So by (1) we have $\rho_T(p+1,U_{p+1}\cdot X'U_n)<\omega$. Thus $\rho_T(p,U_p\cdot XU_n)\leq\omega$. On the other hand, we can see that $R^n(uL_p)$ is unbounded in $\omega$ from $n\leq p<R^n(u)$. Therefore, we can find a sequence $(A_k)$ in $U_p$ such that $\lim_{k\to\infty}R^n(ub_k)=\infty$. Then
    \begin{align*}
        N_3(A_kX)\geq R^n(uv+ub_k-x)\to\infty.
    \end{align*}
    This shows $N_3(A_kX)\to\infty$. Now by Lemma~\ref{N2N3的性质}(4) we have $N_2(A_kX)=N_2(X)$. Then by (1) we have
    \begin{align*}
        &\rho_T(N_2(X),U_{N_2(X)}\cdot A_kXU_n)\\
        =&\,\rho_T(N_2(A_kX),U_{N_2(A_kX)}\cdot A_kXU_n)\\
        =&\,\max\{0,N_3(A_kX)-N_2(X)-1\}.
    \end{align*}
    So $\rho_T(p,U_p\cdot XU_n)=\omega$.

    (3) For $X'U_n\in U_n\cdot XU_n$, we have $R^n(u')=R^n(u)$. Using (2) and Lemma~\ref{N2N3的性质}(4) we obtain
    $$\rho_T(N_2(X')-1,U_{N_2(X')-1}\cdot X'U_n)=\rho_T(N_2(X)-1,U_{N_2(X)-1}\cdot X'U_n)=\omega.$$
    This implies $\rho_T(n,U_n\cdot XU_n)=\omega+(N_2(X)-n-1)$.

    (4) By (3), the only remaining case is $n\geq R^n(u')$. But $N_2(X')=n$ in this case. So by (1) we have $\rho_T(n,U_n\cdot X'U_n)<\omega$.

    (5) We can find a sequence $(A_k)$ in $U_{n-1}$, such that $R^n(a_k+u)\to\infty$ and $R^n(b_k+v)=R^n(v)$. Now we have $N_2(A_kX)\to\infty$. Then by (3) we obtain
    \begin{align*}
        &\rho_T(n-1,U_{n-1}\cdot XU_n)\\
        =&\,\sup\{\rho_T(n,U_n\cdot X'U_n)+1:X'U_n\in U_{n-1}\cdot XU_n\}\\
        \geq&\,\sup\{\rho_T(n,U_n\cdot A_kXU_n)+1:k\in\omega\}\\
        =&\,\omega\cdot 2.
    \end{align*}
    Applying (4) we can see that $\rho_T(n-1,U_{n-1}\cdot XU_n)=\omega\cdot 2$.
\end{proof}

\begin{lemma}
    $U$ is proper \hier3.
\end{lemma}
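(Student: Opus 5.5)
We need to show $\rho(\mathcal U)\le\omega\cdot3$ and that $U$ is not L-\hier2, i.e. $\omega(\rho(\mathcal U))\ge\omega\cdot 3$. We work with $\mathcal U=(U_n)\in\dgnb(U)$ from Proposition~\ref{U是non-archimedean Polish群} and with $X(\mathcal U)=\bigcup_n U/U_n$. Then $\rho(\mathcal U)=\sup_n\rho(\otr{U}{U/U_n})$, because the orbit tree on a disjoint union is the disjoint union of the orbit trees. So it suffices to compute $\rho(T)$ for each $T=\otr{U}{U/U_n}$.

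\textbf{Upper bound.} Fix $n\ge1$; the case $n=0$ is trivial, since $U/U_0$ is a single point. By Lemma~\ref{U的轨道树节点的秩}(5), every node at level $n-1$ has rank exactly $\omega\cdot2$. We then go down the levels $n-2,\dots,0$ using Proposition~\ref{L_k(T)} with $m=n-1$: every node at level $k<n-1$ has rank at most $\omega\cdot2+1+(n-1-k-1)<\omega\cdot 3$. Nodes at levels $\ge n$ lie above level $n-1$ nodes, so their ranks are smaller. Hence every node of $T$ has rank $<\omega\cdot3$, and so $\rho(T)\le\omega\cdot 3$. Taking the supremum over $n$ gives $\rho(\mathcal U)\le\omega\cdot3$, so $U$ is \hier3.

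\textbf{Lower bound.} By Lemma~\ref{U的轨道树节点的秩}(5), $\rho(T)\ge\omega\cdot2+1$ for every $n\ge1$. This alone does not rule out L-\hier2, since that would only need $\omega(\rho(\mathcal U))\le\omega\cdot2$. We therefore need ranks that climb through $\omega\cdot2+k$ for unboundedly many $k$. The node $(n-1,U_{n-1}\cdot XU_n)$ has rank $\omega\cdot 2$, and it lies at level $n-1$. Passing to its ancestors at levels $n-2,\dots,0$ adds $1$ at each step, as long as the orbit strictly grows at each level. If this holds, the root $(0,U\cdot XU_n)$ has rank $\omega\cdot2+(n-1)$. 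Letting $n\to\infty$ then gives $\rho(\mathcal U)\ge\sup_n(\omega\cdot 2+n)=\omega\cdot3$. Thus $\omega(\rho(\mathcal U))=\omega\cdot 3$, so $U$ is not L-\hier2 and is proper \hier3.

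The main obstacle is the claim that the rank increases strictly at each level below $n-1$. Concretely, we must show that for each $k<n-1$ the node $(k,U_k\cdot XU_n)$ has a child whose rank is at least that of a level-$(k+1)$ node of rank $\omega\cdot2+(n-2-k)$. Since every level-$(n-1)$ node has rank exactly $\omega\cdot2$, it suffices that each level-$k$ orbit strictly contains its level-$(k+1)$ sub-orbits. Equivalently, $(k+1,U_{k+1}\cdot XU_n)$ must be a nonsingleton node lying strictly below the rank-$\omega\cdot2$ node at level $n-1$, so that its rank is at least $\omega\cdot2+(n-2-k)$.

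To check this, I would first show that every level-$(n-1)$ orbit is nonsingleton (this is implicit in (5)), which makes all the intermediate nodes nonsingleton. Then, by induction downward, the rank at level $k$ is at least the child's rank plus one. This induction uses Proposition~\ref{tree} and needs nothing beyond the nonsingleton property. If the nonsingleton property turns out to be delicate, I would instead argue via Lemma~\ref{another highest orbit tree}-style monotonicity, and replace $\mathcal U$ by the subsequence starting at $U_{n-1}$ if needed.
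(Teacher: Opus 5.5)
Your proposal is correct and follows the paper's proof. The paper reads off from part (5) of the lemma computing ranks in $\otr{U}{U/U_n}$ that $\rho(\otr{U}{U/U_n})=\omega\cdot 2+n$ for each $n\ge 1$, then takes the supremum to get $\rho(\mathcal U)=\omega\cdot 3$; your upper bound via Proposition~\ref{L_k(T)} and your lower bound via the chain of ancestors are exactly the routine details behind that equality. The ``main obstacle'' you flag is not a real one. Nodes carry their level, so $(k,U_k\cdot XU_n)<(k+1,U_{k+1}\cdot XU_n)$ holds whether or not the orbit strictly grows. Each ancestor of a level-$(n-1)$ node is also automatically a nonsingleton orbit, because it contains that node's orbit, so no fallback argument is needed.
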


\begin{proof}
    Fix an $n\geq 1$. Then by Lemma~\ref{U的轨道树节点的秩}(5) we have $\rho(\otr{U}{U/U_n})=\omega\cdot 2+n$. Thus $\rho(\mathcal{U})=\omega\cdot 3$. This shows that $U$ is proper \hier3.
\end{proof}

Finally, let
\begin{align*}
    N=\left\{
    \begin{pmatrix}
        1 & 0 & d & f\\
        0 & 1 & 0 & e\\
        0 & 0 & 1 & 0\\
        0 & 0 & 0 & 1
    \end{pmatrix}\in U:d,e,f\in G\right\}.
\end{align*}
Clearly, $N$ is an abelian normal closed subgroup of $U$. It is routine to check that $U/N$ is also an abelian group. So both $N$ and $U/N$ are \hier1. This finishes the construction of our counterexample.

\subsection*{Acknowledgements}
We are grateful to Junhao Chen, Xiangxi Hu, Feng Li and Jie Zou for discussion about some details in Section 7.


\begin{thebibliography}{99}

\bibitem{SAllisonAPana} S. Allison, A. Panagiotopoulos, \textit{The class and dynamics of $\alpha$-balanced Polish groups}. available at \url{https://arxiv.org/abs/2406.06082}, 2024.

\bibitem{BK} H. Becker, A.S. Kechris, The Descriptive Set Theory of Polish Group Actions, Lond. Math. Soc. Lect. Note Ser., vol. 232, Cambridge University Press, 1996.

\bibitem{DingW} L. Ding, X. Wang, \textit{A hierarchy on non-archimedean Polish groups admitting a compatible complete left-invariant metric}, J. Symb. Logic, 1-19. doi:10.1017/jsl.2024.7

\bibitem{gaobook} S. Gao, Invariant Descriptive Set Theory, Monographs and Textbooks in Pure and Applied Mathematics, vol. 293, CRC Press, 2009.

\bibitem{gao98} S. Gao, \textit{On automorphism groups of countable structures}, J. Symb. Logic 63 (1998) 891-896.






\bibitem{malicki11} M. Malicki, \textit{On Polish groups admitting a compatible complete left-invariant metric}, J. Symb. Logic 76 (2011) 437--447.

\bibitem{xuan} M. Xuan, On steinhaus sets, orbit trees and universal properties of various subgroups in the permutation group of natural numbers, Ph.D. thesis, University of North Texas, 2012.

\end{thebibliography}
\end{document}